\algnewcommand\algorithmicswitch{\textbf{either}}
\algnewcommand\algorithmiccase{\textbf{option}}
\algnewcommand\algorithmicnocase{\textbf{}}
\newlength\myindent
\newenvironment{proofsk}{\noindent{\bf Sketch of Proof}\hspace*{1em}}{\qed\bigskip\\ }
\tikzset{->-/.style={decoration={
  markings,
  mark=at position .6 with {\arrow[scale=2]{>}}},postaction={decorate}}}
\tikzset{-<>-/.style={decoration={
  markings,
mark=at position .3 with {\arrow[scale=2]{<}},
  mark=at position .7 with {\arrow[scale=2]{>}}
},postaction={decorate}}}
\def\dom{{\sf dom}}
\def\Ax{{\bf Ax}}
\def\set#1{\{#1\}}
\def\F{\f F}
\DeclareMathOperator{\G}{\Gamma_{\mathit{D}}}
\newtheorem{Thm}{Theorem}[section]
\newtheorem{Def}[Thm]{Definition}
\newtheorem{Pro}[Thm]{Proposition}
\newtheorem{Lem}[Thm]{Lemma}
\newtheorem{Cor}[Thm]{Corollary}
\newtheorem{Rem}[Thm]{Remark}
\newtheorem{Cla}[Thm]{Claim}
\newtheorem{Prob}[Thm]{Problem}
\newtheorem{Conj}[Thm]{Conjecture}
\newenvironment{definition}
{\begin{Def}\sl}{\end{Def}}
\newenvironment{theorem}
{\begin{Thm}\sl}{\end{Thm}}
\newenvironment{claim}
{\begin{Cla}\sl}{\end{Cla}}
\newenvironment{lemma}
{\begin{Lem}\sl}{\end{Lem}}
\newenvironment{corollary}
{\begin{Cor}\sl}{\end{Cor}}
\def\props{{\sf props}}
\def\F{{\bf F}}
\def\G{{\bf G}}
\def\P{{\bf P}}
\def\H{{\bf H}}
\newcommand{\nats}{\mbox{\( \mathbb N \)}}
\newcommand{\rats}{\mbox{\(\mathbb Q\)}}
\newcommand{\reals}{\mbox{\(\mathbb R\)}}
\newcommand{\ints}{\mbox{\(\mathbb Z\)}}
\def\c#1{{\mathcal #1}}
\def\restr #1{{\restriction_{#1}}}
\def\to{\rightarrow}
\def\x{{\sf x}}
\def\y{{\sf y}}
\def\z{{\sf z}}
\def\w{{\sf w}}
\def\e{{\sf e}}
\def \b {{\sf b}}
\def \t  {{\sf t}}
\def \l {{\sf l}}
\def \r{{\sf r}}
\def\l{{\sf l}}
\def\r{{\sf r}}
\def\Sq{{\sf Sq}}
\def\MCS {{\sf MCS }}
\def\MCSx{{\sf MCS}}
\def\MCSs {{ \sf MCS}s }
\def\MCSss {{ \sf MCS}s}
\def\trace{t}
\def\triang{\tau}
\def\move#1#2#3{(1, #1, #2, #3)}
\def\movel#1#2{(2, #1, #2)}
\author{R Hirsch} 
\author{M Reynolds}
\subjclass[2000]{03B44;83A05 }
\thanks{Thanks to B. McLean, I. Shapirovski, V. Shehtman and S. Kikot for valuable contributions to this paper.}  
\keywords{Temporal Logic, Decidability, Minkowski spacetime, Interval logic} 
\title[Temporal Logic of Minkowski spacetime is decidable]{The Temporal Logic of two dimensional Minkowski spacetime is decidable} 
\begin{document}

\begin{abstract}
We consider Minkowski spacetime, the set of all point-events of spacetime
under the relation of {\em causal} accessibility. That is, $\x$ can access $\y$ if an
electromagnetic or (slower than light) mechanical signal could be sent from $\x$ to $\y$.
We use Prior's tense language of $\F$ and $\P$ representing causal accessibility
and its converse relation.  We consider two versions, one where the accessibility relation is reflexive and one where it is irreflexive.

In either case it has been an open problem, for decades, whether the logic is decidable or  axiomatisable.
We make a small step forward by
proving, in each case, that the set of valid formulas over two-dimensional Minkowski 
spacetime is decidable and that the complexity of each problem is PSPACE-complete.

A consequence is that the temporal   logic of intervals with real endpoints under either the containment relation or the strict containment relation is PSPACE-complete, the same is true if the interval accessibility relation is ``each endpoint is not earlier'', or its irreflexive restriction.

We provide a temporal formula that distinguishes between three-dimensional and two-dimensional Minkowski spacetime and another temporal formula that distinguishes the two-dimensional case where the underlying field is the real numbers from the case where instead we use the rational numbers.

\end{abstract}

\maketitle
\section{Introduction}
Arthur Prior, pioneer of tense and temporal logic,
realised that
relativity challenged 
some of the basic assumptions that
seem to underlie such logics. He mentions relativistic time briefly in his monograph \cite{Pri67} as an example of a non-linear flow of time and
his last published talk before his untimely
death
in Norway
was on this very subject
\cite{PriorNOP70}.
See \cite{Mul07} for discussion of the
interplay of tense logic
and relativity at that time.

Goldblatt studied the Modal Logic of Minkowski Spacetime \cite{Gol80} and showed that the modal logic for such frames was axiomatised by {\bf S4.2} (with axioms for reflexivity, transitivity and confluence), regardless of the (non-zero) number of spatial dimensions, where the reflexive accessibility relation is either `can send a signal at the speed of light or less' or `can send a signal at less than the speed of light'.   The same set of axioms was shown to be complete  over reflexive frames where the coordinates are taken from the rational numbers, rather than the reals.  However he observed that irreflexive frames of this type could be distinguished by modal formulas, the logic of irreflexive slower than light accessibility was shown to be axiomatised by {\bf OI.2} (with axioms for transitivity, seriality, confluence and two-density) by
Shehtman and Shapirovsky 
\cite{DBLP:conf/aiml/ShapirovskyS02}.

The main focus of this paper is the logic of valid \emph{temporal formulas} over two dimensional Minkowski frames (i.e. one spatial and one temporal dimension).  The frames for these temporal logics are the same as the corresponding modal frames but the temporal language extends the modal language because it includes an operator $\P$ to refer to points in the past, as well as the purely modal $\F$ operator to refer to the future.    One key difference between the two dimensional and higher dimensional cases is that the reflexive `can send a signal at the speed of light or less' relation in two dimensions is a  distributive lattice while it fails to be a lattice in higher dimensions.  (There are other peculiarities of two dimensional Minkowski spacetime, for example the basic axioms of special relativity permit faster than light travel in two dimensions but not in higher dimensions \cite[theorem~11.7]{AMN07}.)  This peculiarity of the two dimensional case allows us to change the axes of a two dimensional frame and view the frame as a kind of product, as we will see below,  and suggests a more general study of this kind of product.    Let $\c F_1=(F_1, R_1),\; \c F_2=(F_2, R_2)$ be two Kripke frames.  Define the  frame $\c F_1 \bullet  \c F_2$ to be the frame with base set $F_1\times F_2$ and  accessibility relation  $R$ defined by
\[ (x_1, x_2) R(y_1, y_2) \;\iff\; (x_1R_1y_1\wedge x_2R_2y_2)\]
Note that this is a Kripke frame with a single accessibility relation, $R$, unlike the \emph{product frame} more widely used by Modal logicians which has two accessibility relations, one for each dimension.  In the current paper we prove the decidability of the temporal validities of $(\reals^2, \leq) = (\reals, \leq)\bullet(\reals, \leq)$, as well as the irreflexive restriction of this accessibility relation over $\reals^2$.     Other products should also be considered, e.g. ${(\rats^2, \leq),} \;   {(\ints^2,<),}$ \/ ${(\reals,<)\bullet(\ints, \leq)}$ and we discuss some of these briefly towards the end.

If we restrict our two dimensional frames to pairs $(x, y)$ where $x<y$ we may view our restricted frames as \emph{interval} frames.   A modal logic of intervals was first proposed by Halpern and Shoham and shown to be undecidable, for various flows of time \cite{HS91}.  The decidability of various fragments of  interval logics has been investigated quite intensively --- see, for example,  \cite{BDGMS08,MM11}.   The relation `can send a message at the speed of light or less', when restricted to the half-plane $x<y$, becomes the disjunction of the relations $\{\mbox{equals, starts, ended\_by, overlaps, meets},$
 before$\}$ on intervals.  A consequence of our results is that the \emph{temporal} logic of intervals over the reals with this single accessibility relation is decidable, indeed PSPACE-complete.  By a similar consideration of the half-plane $x+y>0$ we prove that the temporal logic of intervals under the \emph{containment} relation (whether strict or not) is also PSPACE-complete.   A further relevant but contrasting result is that the modal logic of two dimensional Minkowski frames with the accessibility relation `can send a signal at exactly the speed of light' is undecidable \cite{Shap10}.

There is some discussion of 
spacetime temporal and modal logics based on the natural numbers
in \cite{Phil01,Phil98,Uck07}.  In addition the logics of frames of higher dimension is also of interest and we conjecture that these logics are all undecidable; however this problem remains open.

\medskip

In this paper our main focus is for two dimensional spacetime with real numbers for time and space.  We prove decidability by a method with some similarities to the mosaic method \cite{Nem95} but here defined in two dimensions.   In very brief outline,  in order to determine the satisfiability of a temporal formula $\phi$ we focus our attention on  the set $Cl(\phi)$ of subformulas and negated subformulas of $\phi$, as is normal in filtration constructions, and consider maximal consistent sets and clusters of maximal consistent sets from this closure set.
We consider a two dimensional model $(\reals^2, <, h)$ of $\phi$, where $h$ maps points in $\reals^2$ to maximal consistent sets, satisfying certain conditions, as a special case of a \emph{rectangle model} $(D, <, h)$ where $D$ is some rectangular subset of $\reals^2$.   The idea is to glue rectangle models together to make more complex rectangle models, in various ways.  Two rectangle models may be glued together if their domains contain a common boundary line and they agree along that line, but more complex combinations of rectangles are also required in our construction (we also define \emph{limits} and \emph{shuffles}), see \cite{FMR12,FMR15} for similar constructions in one dimension.  In order to glue these rectangle models together, only a finite amount of information need be known --- we need to know the sequence of clusters and maximal consistent sets holding  along the boundary of $D$ (in the case where $D$ includes boundary points), but we do not need to know where clusters or maximal consistent sets are true over the interior of $D$, except we do record the minimum and maximum clusters that holds over the interior of $D$.  This information is recorded in what we call a \emph{boundary map}, and there are only finitely many possible boundary maps, given a temporal formula $\phi$.   We give a recursive definition of a set $B$ of these  boundary maps --- every boundary map in $B$ is built recursively out of simpler boundary maps in $B$ --- and we show that every rectangle model gives rise to a boundary map in $B$ and conversely every boundary map in $B$ can be obtained from a rectangle model (lemma~\ref{lem:main}).  Hence, the recursive definition of $B$ provides a decision procedure for the satisfiability of $\phi$.

The structure of this paper is as follows.
Section 2 is on preliminaries,
Section 3 is on Boundary Maps, 
Sections 4 and 5 are on Decidability and Complexity and contain the substantial proofs and main results of this paper,
Section 6 is on Temporal Logics of Intervals and Section 7 provides formulas that distinguish two dimensional frames from higher dimensional frames and real valued frames from rational valued frames.  We end with a list of open problems.
\bigskip

\section{Preliminaries}

\subsection{Formulas}
Temporal propositional formulas are defined by
\[\phi::=\props|\neg\phi|(\phi\vee\phi')|\F\phi|\P\phi\]
where $\props$ is a countably infinite set of propositions, we use standard abbreviations $\alpha\wedge \beta=\neg(\neg\alpha\vee\neg \beta),\;\alpha\rightarrow\beta =\neg\alpha\vee\beta,\;\G(\phi)=\neg\F\neg\phi$ and $\H(\phi)=\neg\P\neg\phi$.    We write $|\phi|$ for the length of, or number of characters in, $\phi$.

\subsection{Frames and Models}\label{sec:prelims}
We consider Kripke frames $(F, R)$, where $R$ is a binary relation over the set $F$.  A \emph{valuation} over such a frame is a map $v:\props\rightarrow\wp(F)$.  A  Kripke structure is a triple $(F, R, v)$ where $v$ is a valuation.   Temporal formulas may be evaluated at points in such structures by using the valuation to evaluate propositions, evaluating propositional connectives in the normal way,  letting $(F, R, v), \x \models\F\phi\iff\exists \y\in F\; ((\x, \y)\in R\wedge (F, R, v), \y\models\phi)$ and $(F, R, v), \x\models\P
\phi\iff \exists \y\in F\; ((\y, \x)\in R\wedge (F, R, v), \y\models\phi)$.  A formula $\phi$ is valid over $(F, R)$, and we may write $(F, R)\models\phi$,  if for all valuations $v$ and all $\x\in F$ we have $(F, R, v), \x\models\phi$.  

\begin{definition}\label{def:rel}
Given a  structure $(F, R, v)$ and a subset $P\subseteq F$ we define the relativized substructure $(P, R_P, v_P)$ where $R_P=R\cap(P\times P)$, and $v_P$ is the valuation defined by $v_P(q)=v(q)\cap P$, by restriction to points in $P$.

Let $\phi$ be a temporal formula not involving the proposition $p$.  The \emph{relativisation} $\phi_p$ of $\phi$ to $p$ is defined by letting $q_p=p\wedge q$, for any proposition $q\neq p$, and
\begin{align*}
(\neg \alpha)_p&=p\wedge\neg(\alpha_p)&
(\alpha\vee\beta)_p&=\alpha_p\vee\beta_p\\
(\F\alpha)_p&=p\wedge\F\alpha_p&
(\P\alpha)_p&=p\wedge \P \alpha_p.
\end{align*}
\end{definition}
The following lemma follows directly from the definition of this relativised formula, we omit the proof.
\begin{lemma}\label{lem:P}
Let $(F, R, v)$ be a structure, let $\phi$ be a temporal formula, let $p$ be a proposition not occurring in $\phi$ and let $P=v(p)$.    Then $(F, R, v), \x \models\phi_p\iff (P,  R_P, v_P), \x\models \phi$, for all $\x\in P$.  Furthermore, if $w(q)\cap P=v(q)\cap P$ for each proposition $q$ of $\phi$ then for any $\x\in P$ we have $(P, R_P, v_P), \x\models\phi\iff (P, R_P, w_P), \x\models\phi$.
\end{lemma}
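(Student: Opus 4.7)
The plan is to prove both claims by a simple structural induction on $\phi$, with the relativisation definition driving the inductive step. Before starting the induction it helps to establish an auxiliary observation: for any temporal formula $\alpha$ not involving $p$, any valuation $v$, and any $\y\in F$, if $(F,R,v),\y\models\alpha_p$ then $\y\in P=v(p)$. This is immediate from the definition of $\alpha_p$, since every clause for the relativisation except the disjunction clause introduces a conjunct $p$ at the top level, and a disjunction of two formulas each of which implies $p$ also implies $p$. Thus $\alpha_p$ is satisfied only at points of $P$.

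For the first part of the lemma I would induct on $\phi$. In the atomic case $\phi=q$ (with $q\neq p$), the equivalence $(F,R,v),\x\models p\wedge q \iff \x\in v(p)\cap v(q)=v_P(q)$ is just unpacking the valuation $v_P$. The propositional cases are routine. The interesting step is the modal one, say $\phi=\F\alpha$. Spelling out the semantics of $(F,R,v),\x\models p\wedge\F\alpha_p$, and using the auxiliary observation to replace the existential over $\y\in F$ with an existential over $\y\in P$, we get exactly an existential over $R_P$-successors of $\x$ satisfying $\alpha_p$, which by the induction hypothesis is $(P,R_P,v_P),\y\models\alpha$. The case for $\P$ is symmetric. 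The second part of the lemma is an even easier structural induction: the definition of $v_P$ and $w_P$ together with the hypothesis $w(q)\cap P=v(q)\cap P$ handles the atomic case, and the relativised substructure $(P,R_P)$ is the same on both sides so the Boolean and modal steps go through verbatim.

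There is really no obstacle here: the only point requiring a moment's thought is the auxiliary observation that $\alpha_p$ can only be true at points of $P$, which is what justifies restricting the modal quantifier in the inductive step. Once that is in place the induction essentially writes itself, which is why the authors are content to omit the proof.
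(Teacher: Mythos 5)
Your proof is correct and is precisely the routine structural induction the authors had in mind when they wrote that the lemma ``follows directly from the definition'' and omitted the proof. The only point worth isolating, as you do, is the auxiliary fact that $\alpha_p$ is satisfied only at points of $P$ (itself a quick induction, since every relativisation clause except disjunction carries a top-level conjunct $p$, and the disjunctive case is preserved), which is exactly what licenses restricting the quantifier over $R$-successors to $R_P$-successors in the $\F$ and $\P$ steps.
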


\subsection{Two-Dimensional Minkowski spacetime}
Points in 2 dimensional Minkow-ski spacetime may be given coordinates $(u, t)$ where $u, t\in \reals$ ($u$ is the spatial coordinate measured in light seconds, $t$ is the time coordinate, in seconds).   Under the ordering `can send a message at the speed of light or less' we may order such points by 
\[ (u, t)\leq (u', t')\iff |u'-u|\leq t'-t \]
It is convenient, however, when working in 2 dimensional Minkowski spacetime to use different axes for the coordinates.  If we take two lightlines through
the origin, one heading right, the other left and let $(x, y)$ represent a point a distance $x$ parallel to the rightgoing axis and $y$ parallel to the leftgoing axis, in other words $x=\sqrt{\frac12}(t+u),\;y=\sqrt{\frac12}(t-u)$,  then the ordering becomes the product order
$$ (x, y)\leq (x', y')\iff x\leq x'\wedge y\leq y'.  $$
It follows that  $(\reals^2, \leq)$ is a  distributive lattice, since it is the product of two distributive  lattices.  This fails spectacularly in higher dimensions.   For example in 3 dimensions using coordinates $(u, v, t)$ where $u, v$ are the spatial coordinates, the future lightcone of the point $(-1, 0, 0)$ meets the future lightcone of the point $(1, 0, 0)$ not in a single point, as in the 2 dimensional case, but in one branch of the hyperbola $u=0,\; t^2=1+v^2$, so $(-1,0,0), (1,0,0)$ have no join, nor does any unordered pair of points have a join or a meet.

  Returning to the two dimensional case, we also define binary relations (illustrated in figure~\ref{fig:quads})  $<, {\prec,}  {\leq_1,} {\leq_2,}  \triangleleft$ by
\begin{align*}
(x, y)<(x', y')&\iff (x, y)\leq(x', y')\wedge (x, y)\neq (x', y')\\
(x, y)\prec(x', y')&\iff x<x'\wedge y<y'\\
(x, y)\leq_1(x', y')&\iff x\leq x'\wedge y=y'\\
(x, y)\leq_2(x', y')&\iff x=x'\wedge y\leq y'\\
(x, y)\triangleleft(x', y') &\iff x\leq x'\wedge y\geq y'\wedge (x, y)\neq(x', y').
\end{align*}
\begin{figure}
\begin{tikzpicture}[scale=.6]
\draw [->] (-2,0) -- (2, 0);
\draw[->] (0, -2) -- (0, 2);
\node [above right] at (0, 0) {$\x$};
\node at (1,1) {$\succ \x$};
\node at  (-1,-1) {$\prec \x$};
\node at (-1, 1) {$\triangleleft \; \x$};
\node at (1,-1) {$\triangleright \;\x$};
\node [right] at (2,0) {$>_1\x$};
\node [above] at (0,2) {$>_2\x$};
\node[below] at (0, -2) {$<_2$};
\node[left] at (-2,0){$<_1$};

\draw [fill] (0,0) circle [radius=2pt];
\end{tikzpicture}
\caption{\label{fig:quads}Binary relations on $\reals^2$}
\end{figure}
  We write $\x, \y, \z$ etc. to denote typical points in $\reals^2$.   We may write $\x<_1\y$ if $\x\leq_1\y$ and $\x\neq \y$ and in this case we may say that $\y$ is due East of $\x$; similarly if $\x<_2\y$ we may say that $\y$ is due North of $\x$.   Observe that $\triangleleft$ contains $<_1$ and $>_2$, and for any $\x, \y\in\reals^2$, exactly one of the following holds: $\x=\y,\; \x\prec \y,\; \y\prec \x,\;\x\triangleleft\y,\;\y\triangleleft\x$.  We write $\x^\uparrow$ for $\set{\y\in\reals^2:\y\geq \x}$ and $\x^\downarrow$ for $\set{\y\in\reals^2:\y\leq\x}$.   If a non-empty subset $S$ of $\reals^2$ is bounded below (respectively above) there is a unique infimum (supremum) of $S$ written $\bigwedge S$ ($\bigvee S$).  For arbitrary $\x, \y\in\reals^2$ we write $\x\vee\y$ for $\bigvee\set{\x, \y}$ and define $\x\wedge\y$ similarly.  Note that $\x\vee\y$ is the greater of $\x$ and $\y$ if they are ordered, else it is the unique solution of $\x\leq_1(\x\vee\y)\geq_2\y$ or $\x\leq_2(\x\vee\y)\geq_1\y$.  
   A \emph{spatial set} $S\subseteq\reals^2$ satisfies $\forall\x, \y\in S \; \neg(\x< y)$.  An \emph{ordered set} $S$ satisfies $\forall\x, \y\in S\;(\x<\y\mbox{ or }\y>\x\mbox{ or }\x=\y)$.    A \emph{line} is a set $\set{(x, y)\in\reals^2:y=mx+c}$ or $\set{(a, y)\in\reals^2}$, for any constants $m, c, a$.  $\set{(x, y):y=mx +c}$ is spatial if $m<0$, other lines are ordered.  For $m>0$ the line $\set{(x, y):y=mx+c}$ is a slower than light line.   A line segment is a convex subset of a line.  

Until theorem~\ref{thm:complexity}, we focus on the validities over the irreflexive frame $(\reals^2, <)$.  Having proved the decidability of these validities, the decidability of the set of validities over the reflexive frame $(\reals^2, \leq)$ easily follows from the following equivalence
\[ (\reals^2, \leq)\models\phi \iff (\reals^2, <)\models\phi'\]
where $\phi'$ is obtained from $\phi$ by replacing each subformula $\F\psi$ of $\phi$ by $(\psi\vee\F\psi)$ and each subformula $\P\theta$ by $(\theta\vee\P\theta)$.\\

\subsection{Axioms}
Let $\Ax$ be the basic modal logic $K$, temporal axiom and duals: it consists of an axiomatisation of propositional logic together with all instances of
\begin{align*}
\G(\psi\rightarrow\theta)&\rightarrow(\G\psi\rightarrow\G\theta)&
\psi&\rightarrow \G\P\psi\\
\H(\psi\rightarrow\theta)&\rightarrow(\H\psi\rightarrow\H\theta)&\psi&\rightarrow\H\F\psi
\end{align*}
with modus ponens and  temporal generalisation as inference rules.  These axioms and rules are clearly sound over arbitrary frames but certainly not complete for $(\reals^2, <)$ (e.g. the transitivity axiom $\G\psi\rightarrow\G\G\psi$ and the confluence axiom $\F\G\psi\rightarrow\G\F\psi$ do not follow from $\Ax$) but they suffice for our construction.

\subsection{Filtration}
Let $\phi$ be a formula,  $Cl(\phi)=\set{\psi, \neg\psi:\psi\mbox{ is a subformula of }\phi}$ is the closure set.  Throughout this paper, $\phi$ will be a fixed formula whose satisfiability we wish to determine, all formulas will be restricted to formulas in $Cl(\phi)$.   \MCS is the set of maximal consistent subsets of $Cl(\phi)$ wrt $\Ax$.  \MCS has an ordering $<$ given by 
\begin{eqnarray*}
m< n &\iff &\mbox{for all }\F\psi\in Cl(\phi),\;  ((\psi\in n\rightarrow \F\psi\in m)\wedge(\F\psi\in n\rightarrow \F\psi\in m))    \\  
&&\wedge\;\;
\mbox{for all }\P\psi\in Cl(\phi), \;   ((\psi\in m\rightarrow \P\psi\in n)\wedge(\P\psi\in m\rightarrow \P\psi\in n)).
\end{eqnarray*}
 where $m, n\in \MCSx$.   Write $m\sim n$ if  $m< n$ and $n< m$.   It follows from this definition that $<$ is transitive, also  by consistency and maximality of $m<n$ if $\G\psi\in m$ then $\psi, \G\psi \in n$ and a dual property for $\H$.  Hence,
  if $m\sim n$ and $\theta\in Cl(\phi)$ is any temporally bound formula (i.e. either $\F\psi, \P\psi, \G\psi$ or $\H\psi$, some $\psi$) then $\theta\in m\iff\theta\in n$.  A \emph{cluster} is a maximal clique $c$ of \MCSss, so $m, n\in c \rightarrow m\sim n$ and if $m'\not\in c$ then $m\not\sim m'$.  The set of clusters forms a partition of the set of reflexive \MCSss.  If $m$ is a reflexive $\MCS$ we write $[m]$ for the cluster containing $m$.     We may write $c\leq d$, for two clusters $c, d$,  when $m\leq n$ for some (equivalently all) $m\in c,\; n\in d$.  We write $c<d$ to mean $c\leq d$ and $c\neq d$, this defines an irreflexive ordering of clusters.  We can extend the use of these relations $<, \leq$ to relate $\MCSs$  to clusters, thus $m\leq c$ means that for all $m'\in c$ we have $m\leq m'$ and $m<c$ means for all $m'\in c$ we have $m<m'$.

 For $\psi\in Cl(\phi)$ and a cluster $c$ of $\MCSs$  we may say that ``$\phi$ belongs to $c$'' if $\phi\in\bigcup c$, i.e. if there is an $\MCS$ $m$ such that $\phi\in m\in c$.  

\begin{definition}\label{def:trace}
\begin{itemize}
\item A \emph{trace} is an alternating ordered sequence of clusters and $\MCSs$,\break $(c_0, m_0, c_1, m_1, \ldots, m_{k-1}, c_k)$ (some $k\geq 0$) where $c_i\leq m_i\leq c_{i+1}$ and $c_i<c_{i+1}$, for $i<k$.  
\item  $c_0$ is the initial cluster of the trace and $c_k$ is the final cluster.  
\item Given two traces $\trace=(c_0, m_0, \ldots, c_k)$ and $\trace'=(c'_0, m'_0,\ldots, c'_{k'})$ and an $\MCS$ $m$ such that $c_k\leq m\leq c'_0$, we let  $\trace+_m\trace'$ denote the trace
 \begin{align*}
(c_0, m_0, \ldots, m_{k-1}, c_k, m'_0, c'_1, \ldots, c'_{k'})& \mbox{if } c_k=c'_0, \mbox{ or}\\
(c_0, m_0, \ldots, c_k, m, c'_0, m'_0, c'_1, \ldots, c'_{k'})& \mbox{if }  c_k<c'_0
\end{align*}
 so in the former case the the final and initial clusters are combined into one, in the latter they are separated by $m$.  

\item We may write $\trace\leq c$ to mean that all clusters and  \MCSs  of $\trace$ are $\leq c$, etc.    

\item An \emph{extended trace} $(m_{-1}, c_0, \ldots, , c_k, m_k)$ (some $k\geq 0$) consists of a trace $(c_0, \ldots, c_k)$ preceded by an \MCS $m_{-1}\leq c_0$ and followed by an \MCS $m_k\geq c_k$.
\end{itemize}
\end{definition}

For any $\x\in\reals^2$ and any 2D structure $\c M$, let $\x^\c M$ denote the \MCS of all formulas in $Cl(\phi)$ that hold at $\x$.  Observe, by definition of our semantics, that 
\begin{equation}\label{eq:M}
\x<\y\rightarrow \x^\c M\leq \y^\c M \mbox{ and }  \F\psi\in \x^\c M \leftrightarrow\exists \y> \x\; \psi\in \y^\c M  \end{equation} provided $\F\psi\in Cl(\phi)$, and a dual property for $\P$.  Let $l\subset \reals^2$ be any ordered  line or ordered open line segment and let $\lambda:(0, 1)\rightarrow l$ be a  continuous order preserving surjection.  The map $u\rightarrow \lambda(u)^\c M$ is an order preserving
 function from $(0, 1)$ to \MCSss.   For each $u\in(0, 1)$, \/ $\lambda(u)^\c M$ is either an irreflexive \MCS or it belongs to a cluster.  Since there are only finitely many \MCSs and clusters,
  $l$ determines a trace
 $l^{\c M} = (c_0, m_0, \ldots, c_k)$  such that there are points $\x_0, \x_1, \ldots, \x_{i-1}\in l$ where  $\x_i^\c M=m_i$ and for all $\y\in l$ if $\y<\x_0$ then $\y^\c M\in c_0$, if $\x_i<\y<\x_{i+1}$ then $\y^\c M\in c_{i+1}$, for $i<k$, and if $\y>\x_{m-1}$ then $\y^\c M\in c_k$.  
 Similarly, a closed, proper line segment determines an extended trace.

\subsection{Defects}
For any \MCS $m$ and future formula $\F\psi$, if $\F\psi\in m$ then $\F\psi$ is a defect of $m$.  
$\F\psi$ is a defect of the cluster  $c$ if $\F\psi$ belongs to $c$ but $\psi$ does not belong to $c$.    Let $a\leq  b$ be either clusters or \MCSs and suppose $\F\psi$ is a defect of $a$.  If either $\psi$ or $\F\psi$ belongs to $b$ then we say the defect $\F\psi$ is passed up to $b$ from $a$.
If $a$ is a cluster or \MCS of a trace $\trace$, $\F\psi$ is a defect of $a$ and either $a$ is the final cluster of $\trace$ or $\F\psi$ is not passed up to the next  cluster or \MCS of $\trace$ then $\F\psi$ is a defect of $\trace$.  
Similarly we may define  $\P$ defects of \MCSss, clusters and traces.

\subsection{Rectangle models}\label{sec:rectangles}
\begin{definition}
A \emph{rectangle} is a non-empty set $D\subseteq\reals^2$  satisfying  
\[(\x, \y\in D\&((\x\wedge\y)\leq\z\leq(\x\vee\y)))\rightarrow\z\in D.\]
\end{definition}
A rectangle is \emph{proper} if it has non-empty interior.
The whole plane $\reals^2$ is a key example of an unbounded  open, proper rectangle.    The two projections  $\set{x\in\reals:\exists y (x, y)\in D}$ and $\set{y\in\reals:\exists x (x, y)\in D}$ of a rectangle $D$  define  non-empty convex segments in the reals, furthermore $D$ is the Cartesian product of the two projections.   Observe that the set of  convex segments of real numbers may be divided into five categories, according to whether or not 
the segment includes its infimum and supremum, and (if so) whether the infimum and supremum are distinct: segments not containing either their infimum or supremum (open), segments containing their infimum but not their supremum (right open, left closed and bounded), segments containing their supremum but not their infimum (left open, right closed and bounded), proper segments containing their infimum and their supremum (closed and bounded) and one point segments.  Thus there are 25 kinds of bounded rectangles, examples are shown in figure~\ref{fig:rectangles}: 16 are proper (one of these is open, four have a single boundary edge, six have two boundary edges, four have three boundary edges and one is a closed proper rectangle), 8 consist of vertical or horizontal proper bounded segments and 1 is a one point rectangle.   

  For $(x, y)\in\reals^2$ we write $W(x, y)$ for the half-plane rectangle $\set{(x', y')\in\reals^2: x'\leq x}$ and define $E(x, y),\; N(x, y)$ and $S(x, y)$ similarly.  
  For any $\x\prec\y\in\reals^2$ we write $[\x, \y]$ for the proper closed rectangle $\set{\z\in\reals^2:\x\leq\z\leq\y}$. 
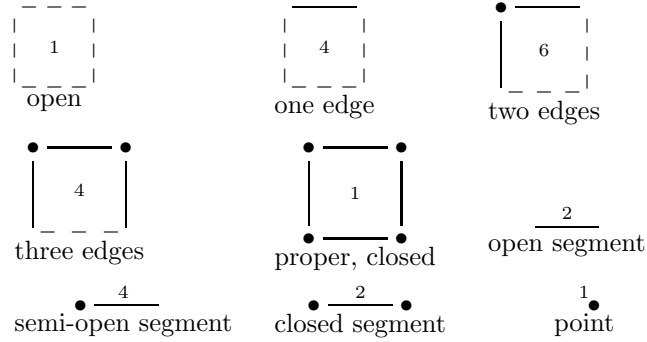
\begin{figure}
\[
\begin{array}{lll}
\xymatrix{
\ar@{}[rd]|{1}\ar@{--}[r]\ar@{--}[d]&\ar@{--}[d]\\
\ar@{--}[r]_{\mbox{open}}&
}
&
\xymatrix{
\ar@{}[rd]|{4}\ar@{-}[r]\ar@{--}[d]&\ar@{--}[d]\\
\ar@{--}[r]_{\mbox{one edge}}&
}
&\xymatrix{
\bullet\ar@{}[rd]|{6}\ar@{-}[r]\ar@{-}[d]&\ar@{--}[d]\\
\ar@{--}[r]_{\mbox{two edges}}&
}\\
\xymatrix{
\bullet\ar@{}[rd]|{4}\ar@{-}[r]\ar@{-}[d]&\bullet\ar@{-}[d]\\
\ar@{--}[r]_{\mbox{three edges}}&
}
&\xymatrix{
\bullet\ar@{}[rd]|{1}\ar@{-}[r]\ar@{-}[d]&\bullet\ar@{-}[d]\\
\bullet\ar@{-}[r]_{\mbox{proper, closed}}&\bullet
}
&
\xymatrix{\\
\ar@{-}[r]^2_{\mbox{open segment}}&
}
\\
\xymatrix{
\bullet\ar@{-}[r]^4_{\mbox{semi-open segment}}&
}
&
\xymatrix{
\bullet\ar@{-}[r]^2_{\mbox{closed segment}}&\bullet
}
&

\xymatrix{
\ar@{}[r]^{\;\;\;\;\;\;\;\;\;\;\;1}_{\;\;\;\;\;\;\;\;\mbox{ \/\/\/\/ point}}&\bullet
}
\end{array}
\]
\caption{\label{fig:rectangles} 25 different types of rectangles of which 16 are proper.  Segments can be horizontal (shown) or vertical.  }
\end{figure}
If $\set{\x, \y}$ are incomparable (i.e. $\x\not\leq\y$ and $\y\not\leq\x$) then $[\x\wedge\y, \x\vee\y]$ is a proper closed rectangle with $\x, \y$ at the left and right corners (either way round), any point in the interior of $[\x\wedge\y, \x\vee\y]$ is said to be \emph{between} $\x$ and $\y$.  The following statement is valid in $(\reals^2, \leq)$ but does not follow from {\bf Ax}: if $\x, \y, \z$ are pairwise incomparable then either $\x$ is between $\y$ and $\z$, \/ $\y$ is between $\x$ and $\z$ or $\z$ is between $\x$ and $\y$.  A spatial set $S$ is said to be densely partitioned by $\set{S_1, S_2}$ if it is the disjoint union of $S_1$ and $S_2$ and for all distinct points $\x, \y\in S$ there are $\z_1\in S_1,\; \z_2\in S_2$ between $\x$ and $\y$.

The following lemma will be used extensively, we omit the proof which  is quite obvious.
\begin{lemma}\label{lem:equiv}\label{lem:path}\label{lem:order}  
Let $f_1:i_1\rightarrow j_1$ be an order preserving bijection from the segment $i_1\subseteq\reals$ onto the segment $j_1\subseteq\reals$ and let $f_2:i_2\rightarrow j_2$ be another order preserving bijection from one segment onto another.  The map $(f_1, f_2): (i_1\times i_2) \rightarrow (j_1\times j_2)$ defined by $(f_1, f_2)(x, y)=(f_1(x), f_2(y))$, for $x\in i_1, \; y\in i_2$,  is an order preserving bijection from the rectangle $i_1\times i_2$ onto the rectangle $j_1\times j_2$.
\end{lemma}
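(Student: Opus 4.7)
The plan is to verify the three things required of $(f_1,f_2)$ in turn: that its image really does lie in $j_1\times j_2$, that it is a bijection, and that it preserves (and reflects) the product order. All three reduce immediately to the corresponding properties of $f_1$ and $f_2$ on the coordinate segments, using that the order on a rectangle $i_1\times i_2\subseteq\reals^2$ is just the product order $(x,y)\leq(x',y')\iff x\leq x'\wedge y\leq y'$ that was recorded in the subsection on Minkowski spacetime.

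First I would note that for $(x,y)\in i_1\times i_2$ we have $f_1(x)\in j_1$ and $f_2(y)\in j_2$ since $f_1,f_2$ have the stated codomains, so $(f_1(x),f_2(y))\in j_1\times j_2$ and the map is well-defined into the rectangle $j_1\times j_2$. For bijectivity, the map $(f_1^{-1},f_2^{-1}):j_1\times j_2\to i_1\times i_2$ is defined because $f_1,f_2$ are bijections, and a direct coordinatewise check shows it is a two-sided inverse of $(f_1,f_2)$. Finally, for order preservation (and reflection), for any $(x,y),(x',y')\in i_1\times i_2$ we have
\[(x,y)\leq(x',y')\iff x\leq x'\wedge y\leq y'\iff f_1(x)\leq f_1(x')\wedge f_2(y)\leq f_2(y')\iff (f_1,f_2)(x,y)\leq(f_1,f_2)(x',y'),\]
where the middle equivalence uses that each $f_i$ is an order preserving bijection and hence an order isomorphism onto its image. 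There is no real obstacle here; the only thing to be slightly careful about is that $i_1\times i_2$ and $j_1\times j_2$ really are rectangles in the sense defined in Section~\ref{sec:rectangles}, which is immediate because any Cartesian product of convex segments of $\reals$ is closed under the betweenness condition $(\x\wedge\y)\leq\z\leq(\x\vee\y)$ coordinatewise.
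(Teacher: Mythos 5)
Your proof is correct; the paper actually omits the proof as ``quite obvious,'' and the coordinatewise verification you give (well-definedness, bijectivity via $(f_1^{-1},f_2^{-1})$, and order preservation/reflection using the product order from the preliminaries) is exactly the intended argument.
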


\begin{definition}\label{def:rect}
A \emph{rectangle model} $h:D\rightarrow \MCS$ has a proper rectangle $D\subseteq\reals^2$ as its domain and satisfies
\begin{description}
\item[coherence]
$\x\leq\y\in D\rightarrow h(\x)\leq h(\y)$ and
\item [no internal defects]
If $\F\psi\in h(\x)$ then either there is $\y\in D$ with $\y>\x$ and $\psi\in h(\y)$ (no defect) or
\begin{itemize}
\item  $D$ includes the boundary point  $\y$ due East of $\x$ and $\F\psi\in h(\y)$ (the defect may be passed East to a neighbouring rectangle model) or
\item  $D$ includes the boundary point $\y$ due North of $\x$ and $\F\psi\in h(\y)$ (the defect may be passed North).
\end{itemize}
and (similarly) where all occurrences of $\P$ defects may be passed South or West.  
\end{description}
   If for all $\x<\y\in D$ such that $h(\x)\sim h(\y)$, for all $m\sim h(\x)$ there is $\z\in D$ with $\x<\z<\y$ and $h(\z)=m$, then we say that $h$ maps \emph{densely}.
\end{definition}
\begin{lemma}\label{lem:truth}
Let $h$ be a rectangle model with $\dom(h)=\reals^2$.  Let $v_h:\props\rightarrow\wp(\reals^2)$ be the valuation defined by $v_h(p)=\set{\x\in\reals^2: p\in h(\x)}$, for $p\in\props$.  Then
\begin{equation}\label{eq:truth} (\reals^2, <, v_h), \x\models\psi\iff \psi\in h(\x)
\end{equation}
for $\psi\in Cl(\phi)$ and $\x\in\reals^2$.

Conversely, if $v:\props\rightarrow\wp(\reals^2)$ is any valuation, then the map $h_v:\reals^2\rightarrow \MCS$ defined by $h_v(\x)=\set{(\psi\in Cl(\phi): (\reals^2, <, v), \x\models\psi}$ is an open rectangle model.
\end{lemma}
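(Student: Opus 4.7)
The plan is to prove the two directions separately, both by relatively routine inductions once the bookkeeping is set up; the only slightly delicate point is how the "no internal defects" condition plays against the fact that $\reals^2$ is a boundary-free rectangle.

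For the forward equivalence (\ref{eq:truth}), I would induct on the construction of $\psi\in Cl(\phi)$. Base propositions are handled immediately by the definition of $v_h$. The Boolean cases use that each $h(\x)$ is a maximal consistent subset of $Cl(\phi)$: for $\neg \alpha$, either $\alpha$ or $\neg\alpha$ lies in $h(\x)$ but not both, and both belong to $Cl(\phi)$; for $\alpha\vee\beta$, maximality and consistency give the usual splitting. The interesting step is $\F\psi$. For the $(\Rightarrow)$ direction, assume $\F\psi\in h(\x)$. The key observation is that $\dom(h)=\reals^2$ has no boundary points at all, so options (b) and (c) of the no internal defects clause cannot apply. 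Therefore option (a) must hold: there exists $\y\in\reals^2$ with $\y>\x$ and $\psi\in h(\y)$. The induction hypothesis yields $(\reals^2,<,v_h),\y\models\psi$, hence $(\reals^2,<,v_h),\x\models\F\psi$. For $(\Leftarrow)$, if $(\reals^2,<,v_h),\x\models\F\psi$, pick $\y>\x$ satisfying $\psi$; by induction $\psi\in h(\y)$, coherence gives $h(\x)\leq h(\y)$, and then the defining clause $\psi\in n\rightarrow\F\psi\in m$ of the order on \MCSs forces $\F\psi\in h(\x)$. The case for $\P\psi$ is the symmetric dual, using the no internal defects clause for $\P$ (southward/westward defects) and the dual clause in the order on \MCSs.

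For the converse, I would verify the three things needed for $h_v$ to be an open rectangle model with $\dom(h_v)=\reals^2$. First, each $h_v(\x)$ is a maximal consistent subset of $Cl(\phi)$: consistency because it is satisfied in $(\reals^2,<,v)$ and $\Ax$ is sound, maximality because for each $\psi\in Cl(\phi)$ also $\neg\psi\in Cl(\phi)$, and at $\x$ exactly one of them holds. Second, coherence: if $\x\leq\y$ then I need $h_v(\x)\leq h_v(\y)$. Each of the four clauses defining this order is immediate from the semantics of $\F$ and $\P$ together with transitivity of $<$ on $\reals^2$; for instance, if $\psi\in h_v(\y)$ and $\x<\y$ then $\F\psi$ holds at $\x$, and if $\F\psi\in h_v(\y)$ then some $\z>\y>\x$ satisfies $\psi$, giving $\F\psi\in h_v(\x)$. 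Third, no internal defects: if $\F\psi\in h_v(\x)$ then by definition there is $\y>\x$ in $\reals^2$ with $\psi\in h_v(\y)$, which is option (a), and options (b),(c) are vacuous since $\reals^2$ is boundary-free; the $\P$ case is dual. Since $\reals^2$ is open and has non-empty interior, $h_v$ is an open rectangle model.

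The only real obstacle is ensuring that the inductive step for $\F$ (and $\P$) correctly exploits the fact that the "defect escape" conditions in Definition~\ref{def:rect} are truly unavailable when the domain is all of $\reals^2$, so that the definition of rectangle model, which a priori permits defects to be passed out through boundary points, actually pins down genuine satisfaction in the interior — this is where the hypothesis $\dom(h)=\reals^2$ is essential, and it is what makes the lemma fail for rectangle models with non-trivial boundary.
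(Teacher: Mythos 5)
Your proposal is correct and follows the same route as the paper: the forward equivalence is a structural induction in which the key point is exactly the one you isolate, namely that $\reals^2$ has no boundary so every $\F$- or $\P$-defect of $h$ must be resolved internally by clause~(a) of Definition~\ref{def:rect}, and the converse is a direct check of coherence and no-internal-defects from the semantics, which is what the paper compresses into ``follows from~\eqref{eq:M} and its dual.'' You simply spell out the bookkeeping (maximality and consistency of each $h(\x)$, the Boolean cases, the appeal to the defining clauses of the order on \MCSs) that the paper leaves implicit.
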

\begin{proof}
For the first part, note that if $h$ is a rectangle model and $\dom(h)$ is open then $h$ has no defects, so we may prove \eqref{eq:truth} by structured formula induction.  The second part follows from \eqref{eq:M} and its dual.
\end{proof}

\begin{definition}\label{def:trace2}
Let $h$ be a rectangle model and let $l\subseteq \dom(h)$ be an ordered, open line segment.   Since there are only finitely many \MCSss, there is a trace $(c_0, m_0, \ldots, c_k)$ and points $\x_0, \ldots, \x_{k-1}\in l$ such that $h(\x_i)=m_i$ and for $\y\in l$ if $\x_i<\y<\x_{i+1}$ then $h(\y)\in c_{i+1}$ (all $i<k$), if $\y<\x_0$ then $h(\y)\in c_0$ and if $\y>\x_{k-1}$ then $h(\y)\in c_k$.  We denote the trace by $\trace(h, l)$ and we denote the point $\x_i$ by $\x(h, l, i)$.
\end{definition}

For any open rectangle model $h$, let $\x\in \dom(h)$ be any point such that for all $\y\leq \x\in \dom(h)$ we have $h(\y)\sim h(\x)$ ($\x$ must exist since there are only finitely many \MCSss).  Then $\set{\y\in \dom(h): \y\prec \x}$ is an open rectangle labelled by a single cluster  $[h(\x)]$, less than or equal to all \MCSs in the range of $h$.  We call $[h(\x)]$ the \emph{lower cluster} of $h$, similarly there is an \emph{upper cluster}  that holds over an open rectangle at the top of $\dom(h)$.   For an arbitrary rectangle model $h$, the upper cluster (respectively lower cluster) of $h$ is the upper cluster (lower cluster) of the restriction of $h$ to the open interior of its domain.  The \emph{height} of a rectangle model is the maximum possible length of a chain of distinct clusters from the lower cluster up to the upper cluster.  
Let $\b\prec \t\in\reals^2$, so $[\b, \t]$ is a closed proper rectangle.  A closed proper  rectangle model $h$ over $[\b, \t]$ has four corners: $\b, \t, \l, \r$ where $\b<_2 \l <_1 \t$ and $\b<_1\r<_2\t$, see figure~\ref{fig:rectangle}(a).  From these corners we get four \MCSss: bottom $=h(\b)$, top $=h(\t)$, left $=h(\l)$ and right $=h(\r)$.  Also, four traces: one from each of the four open segments of the boundary.  
A point model is a closed rectangle model over singleton rectangle $[\x, \x]$.  It's labelled by just a single \MCS 
$h(\x)$,  it has empty interior.

\subsection{Topology}

\begin{lemma}\label{lem:point}
If  $S$ is a partition of an open segment into closed segments (segments containing both endpoints, possibly equal) then $S$ contains an uncountable set of singleton point, closed segments.    If $S$ is a partition of a proper closed segment into more than one closed segments,  then $S$  contains uncountably many singleton points.
\end{lemma}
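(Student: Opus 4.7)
The plan is to analyze each claim by splitting the given partition $S$ into its non-degenerate members $S_0$ (closed segments $[c,d]$ with $c<d$) and its singleton members $S_1$, and showing that $S_1$ is uncountable in each case. A uniform first step handles $S_0$: any collection of pairwise disjoint non-degenerate intervals in $\reals$ is at most countable, since each such interval contains a distinct rational. Hence $|S_0|\le\aleph_0$ in both cases, and the task reduces to showing that $|S_1|$ is uncountable.

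For the second (closed-segment) claim, let the ambient segment be $[a,b]$ with $a<b$, and recall $|S|>1$. Since $[a,b]$ is a continuum (compact, connected, Hausdorff) and every member of $S$ is a closed segment of $\reals$ hence closed in $[a,b]$, Sierpi\'nski's classical theorem --- a continuum cannot be written as a disjoint union of more than one but at most countably many non-empty closed subsets --- forces $|S|$ to be uncountable. Combined with $|S_0|\le\aleph_0$, this yields $|S_1|$ uncountable.

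For the first (open-segment) claim, let the ambient segment be $(a,b)$. If $S_0=\emptyset$ then $S=S_1$ partitions $(a,b)$ into singletons, so $|S_1|=|(a,b)|=2^{\aleph_0}$. Otherwise, suppose for contradiction that $S_1$ is countable; then $S$ is countable too. Pass to the closure $[a,b]$, which is a continuum, and consider the partition $\{\{a\},\{b\}\}\cup S$ of $[a,b]$. Its members are pairwise disjoint non-empty subsets of $[a,b]$, each closed in $[a,b]$ (every member of $S$ is a closed segment of $\reals$ contained in $[a,b]$, and $\{a\},\{b\}$ are obviously closed), and there are more than one of them since $S_0$ is non-empty. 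This exhibits $[a,b]$ as a countable, more-than-one, disjoint union of non-empty closed subsets, contradicting Sierpi\'nski's theorem. Hence $|S_1|$ is uncountable.

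The only mild subtlety, which I would not classify as a real obstacle, is the compactification step in the open case: Sierpi\'nski's theorem is stated for continua, so one has to adjoin the two endpoints first. Beyond that, the argument is a direct combination of the rationals-in-intervals observation with Sierpi\'nski's classical theorem on partitions of continua.
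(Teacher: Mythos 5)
Your proof is correct, and it takes a genuinely different route from the paper's. Where the paper gives a self-contained, constructive argument --- it runs a Cantor-set-style diagonalization, building for each non-constant $\lambda\colon\omega\to\{0,1\}$ a Cauchy sequence of points whose limit $x_\lambda$ must lie in a degenerate piece of the partition, thereby exhibiting the uncountable family $\{x_\lambda\}$ explicitly --- you instead factor the argument through Sierpi\'nski's classical theorem that a continuum cannot be decomposed into more than one but at most countably many pairwise disjoint non-empty closed subsets, together with the standard rationals-in-intervals observation bounding the number of non-degenerate pieces. Your handling of the open-segment case by adjoining the two endpoints to form the continuum $[a,b]$ is the right adaptation, and the case split on whether $S_0$ is empty is clean. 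What you lose relative to the paper is self-containment (you must cite Sierpi\'nski, whose proof is itself a non-trivial Baire-category style argument); what you gain is brevity and a reusable general principle. Both approaches establish the lemma.
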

\begin{proof}   For each function $\lambda:\omega\rightarrow\set{0, 1}$  other than the constant $0$ function and the constant $1$ function, we will construct  a singleton element $[x_\lambda, x_\lambda]\in S$.  We define $x_\lambda$ as the limit of a Cauchy sequence
$a_0, a_1, \ldots$, as follows.  Without loss, we consider a partition of the open segment $(0, 1)$ into closed segments, so each $x\in (0, 1)$ is covered by a unique closed segment, say $x\in [l(x), u(x)]\in S$.    If $\lambda(0)= 0$ we let $a_0=\frac13$  and if $\lambda(0)=1$ we let $a_0=\frac23$.   The gap between $0$ and $l(a_0)$ is at most $\frac23$, also the gap between $1$ and $u(a_0)$ is at most $\frac23$.      Suppose we have already defined a sequence $a_0, a_1, \ldots, a_k$ (some $k\geq 0$).       Let $a_k^+>a_k$ be the minimum of  $\set{1, l(a_j):j<k,\;  l(a_j)>u(a_k)}$ and let $a_k^-<a_k$ be the maximum of  $\set{0, u(a_j):j<k,\; u(a_j)<l(a_k)}$.  Suppose inductively that $a_k^+-a_k\leq\frac23\cdot \frac12^k$ and $a_k-a_k^-\leq\frac23\cdot\frac12^k$.  If $\lambda(k+1)=1$ we let $a_{k+1}$ be the midpoint of $u(a_k)$ and $a_k^+$, else  (when $\lambda(k+1)=0$) let $a_{k+1}$ be the midpoint of $l(a_k)$ and $a_k^-$.  The inductive condition is maintained by this.   This defines our sequence $a_0, a_1, \ldots$.

Observe, when $\lambda(k+1)=1$, that for all $j>k$ we have $u(a_k)<a_j<a_k^+$ and when $\lambda(k+1)=0$ then for all $j>k$ we have $a_k^-<a_j<l(a_j)$.  Hence the sequence $a_0, a_1, \ldots$ is a Cauchy sequence, let the sequence converge to $x_\lambda\in [0, 1]$.  If $\lambda$ is not constantly zero we get $x_\lambda>0$ and if it is not constantly one we get $x_\lambda<1$.  For any non-constant function $\lambda:\omega\rightarrow\set{0, 1}$, by our assumption, $x_\lambda\in[l(x_\lambda), u(x_\lambda)]\in S$.  By construction of the sequence, there are closed segments arbitrarily close to $x_\lambda$ above and below,  and since the segments are disjoint, the length of the segment $u(x_\lambda)-l(x_\lambda)$ is zero and $[l(x_\lambda), u(x_\lambda)]\in S$ must be the singleton closed segment $[x_\lambda, x_\lambda]$.  There are uncountably many non-constant sequences $\lambda:\omega\rightarrow\set{0, 1}$ and clearly, distinct functions give rise to distinct sequences with distinct limits.  Hence there are uncountably many singletons in $S$.

For the second statement, let $i_0 < i_1$ be any two closed segments in a partition of $[0, 1]$ into closed segments.  Then the partition contains a subset that partitions the open segment between $i_0$ and $i_1$ into closed segments, so by the first part it must include  uncountably many singleton points.
\end{proof}

\medskip

The closure $\overline S$ of any subset  $S$ of $\reals^2$ is the intersection of all closed sets containing $S$ under the usual topology for $\reals^2$, the interior  of a set  $S$ is the union of all open sets contained in the set, denoted $Int(S)$.  The boundary of $S$ is the closure minus the interior (note that the boundary is closed).   A point  $\x$ belongs to the boundary of $S$ iff every neighbourhood of $\x$ contains points in $S$ and also points outside $S$.  If $S$ is upward directed and bounded above then $\bigvee S\in\overline{S}$.

\begin{lemma}\label{lem:cont}
Let $S$ be a downward closed subset of the closed rectangle $D=[0, 1]\times[0, 1]$.    Let $l$ be an ordered  line.  If $l$ meets $S$ and $l$ also meets  $D\setminus S$ then $l$ meets the boundary of $S$ in $D$.  If $l$ is a slower than light line then the intersection of $l$ with the boundary of $S$ in $D$ is  a single point.  
\end{lemma}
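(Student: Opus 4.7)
The plan is to exploit that an ordered line $l$ makes $l\cap D$ a closed, linearly ordered segment, and that the downward closure of $S$ forces $l\cap S$ to be a prefix of this segment. First I would parametrise $l\cap D$ by its natural order, noting that it is closed and bounded. Since $S$ is downward closed in $D$ and any two points of $l$ are comparable, $l\cap S$ is a downward-closed subsegment of $l\cap D$ and $l\cap(D\setminus S)$ is its complementary upward-closed subsegment.

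For the first statement I would let $\x^*:=\sup(l\cap S)$ inside $l\cap D$; this supremum exists by completeness and lies in $l\cap D$ since that set is closed. I would then split according to whether $\x^*\in S$: if so, then $\x^*$ is not the top of $l\cap D$ (otherwise $l\cap(D\setminus S)$ would be empty, against hypothesis), so points on $l$ immediately above $\x^*$ lie in $D\setminus S$, giving $\x^*\in\overline{D\setminus S}$; if not, then $\x^*\in D\setminus S$ itself while being a limit from below of points in $l\cap S$, so $\x^*\in\overline{S}$. Either way $\x^*$ lies in $\overline{S}\cap\overline{D\setminus S}\cap D$, which is precisely the boundary of $S$ in $D$.

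For the second statement, suppose two distinct points $\x_1<\x_2$ on $l$ both lie in the boundary of $S$ in $D$. Because $l$ is slower than light, the order on $l$ agrees strictly with $\prec$, so $\x_1\prec\x_2$ with both coordinates strictly smaller. Since $\x_2$ is in the closure of $S$, I would pick $\x'\in S$ so close to $\x_2$ (exploiting the strict inequalities) that $\x_1\prec\x'$. Then $V=\{\y\in\reals^2:\y\prec\x'\}$ is open, contains $\x_1$, and for any $\y\in V\cap D$ we have $\y\leq\x'\in S$, so downward closure forces $\y\in S$. Thus $V\cap D$ is a neighbourhood of $\x_1$ in $D$ entirely inside $S$, contradicting that $\x_1$ is on the boundary of $S$ in $D$.

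The main obstacle is part~2, where the slower-than-light assumption is essential: for a purely horizontal or vertical ordered line the intersection with the boundary of $S$ in $D$ can be a nondegenerate segment (take for instance $S$ an L-shaped down-set and $l$ an inner edge), so strictness of $\prec$ in \emph{both} coordinates is what produces the open neighbourhood $V\cap D\subseteq S$ and collapses the boundary crossing to a single point.
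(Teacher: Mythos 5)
Your proof is correct and takes essentially the same route as the paper's: identify $\x^*=\sup(l\cap S)$ on the ordered line as the boundary point, then use the strictness of $\prec$ along a slower-than-light line together with downward-closure of $S$ to produce an open set $V\cap D\subseteq S$ that forces uniqueness. The only cosmetic difference is that for uniqueness you argue by contradiction from two boundary points, whereas the paper directly shows every point of $l\cap D$ strictly above (resp.\ below) the supremum lies in the interior of $D\setminus S$ (resp.\ $S$); both rest on the same observation.
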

\begin{proof}
The set $S\cap l$ is a Dedekind cut in the line segment $D\cap l$, let $\x$ be the supremum of $S\cap l$ ($\x\in D\cap l$ since $D\cap l$ is closed and directed).  Every neighbourhood of $\x$ meets $S$ and  $D\setminus S$, hence $\x$ is in the boundary of $S$ in $D$.   If $l$ is a slower than light line then every point in $D\cap l$ above  $\x$ is contained in a neighbourhood disjoint from $S\cap l$ and  every point on $l$ below $\x$ is in the interior of $S$, hence the intersection of $l$ with the boundary of $S$  in $D$ consists of $\x$ only.
\end{proof}

\begin{lemma}\label{lem:segment}
Let $D=[0, 1]\times[0, 1]$.  Let $S\subset D$ be closed downward within $D$ such that $S, D\setminus S$ are non-empty.  The boundary $\Gamma$ of $S$ in $D$ is homeomorphic to a closed line segment or a single point.
\end{lemma}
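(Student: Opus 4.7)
The plan is to parameterize $\Gamma$ by the anti-diagonal coordinate $g(x, y) = x - y$ and show that this map gives a homeomorphism from $\Gamma$ onto a closed subinterval of $[-1, 1]$ (collapsing to a point when that interval is degenerate). Since $\Gamma$ is closed in the compact rectangle $D$, once $g$ is shown to be a continuous injection onto a closed interval, the conclusion is automatic: a continuous bijection from a compact space onto a Hausdorff space is a homeomorphism.

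First I would verify two order-topological facts. Using that $D$ is closed under the continuous lattice operations, $\overline{S}$ is downward closed in $D$: if $\x_n \in S$ with $\x_n \to \x \in \overline{S}$ and $\z \leq \x$, then $\z \wedge \x_n \in S$ and $\z \wedge \x_n \to \z \wedge \x = \z$, so $\z \in \overline{S}$. Symmetrically $\overline{D \setminus S}$ is upward closed. From this I would deduce that $\Gamma$ contains no strictly $\prec$-ordered pair: if $\x \prec \y$ with $\y \in \overline{S}$, some $\y_n \in S$ satisfies $\x \prec \y_n$, and an entire $\prec$-neighbourhood of $\x$ sits strictly below $\y_n$, placing $\x$ in the interior of $S$ and hence outside $\Gamma$. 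Consequently $g$ is injective on $\Gamma$, because any two distinct points with the same $g$-value would necessarily be $\prec$-comparable.

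The main step is to identify $g(\Gamma)$ precisely as $g(\overline{S}) \cap g(\overline{D \setminus S})$. The inclusion $\subseteq$ is trivial. For the reverse, suppose $t \in g(\overline{S}) \cap g(\overline{D \setminus S})$ and consider the slower-than-light line $L_t = \{(x, y) : x - y = t\}$. Then $A = L_t \cap \overline{S}$ and $B = L_t \cap \overline{D \setminus S}$ are both non-empty closed subsets of the connected segment $L_t \cap D$, and since $\overline{S} \cup \overline{D \setminus S} = D$ they cover it; connectedness of $L_t \cap D$ forces $A \cap B \neq \emptyset$, yielding a point of $\Gamma$ with $g$-value $t$. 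Moreover both $g(\overline{S})$ and $g(\overline{D \setminus S})$ are closed intervals, because $\overline{S}$ is connected (star-shaped about $(0, 0)$ by downward closure) and $\overline{D \setminus S}$ is connected (star-shaped about $(1, 1)$ by upward closure), while $g$ is continuous. Hence $g(\Gamma)$ is an intersection of two closed intervals in $[-1, 1]$, itself a closed interval, and non-empty because a non-empty proper downward-closed subset of the connected space $D$ must have non-empty boundary.

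The main obstacle I foresee is this identification of $g(\Gamma)$ with $g(\overline{S}) \cap g(\overline{D \setminus S})$, which pins down the precise interplay between the lattice structure of $D$ and the ambient topology; the rest is fairly routine. Once it is in place, the homeomorphism follows because $g$ restricted to $\Gamma$ is a continuous injection from the compact set $\Gamma$ onto the closed interval $g(\Gamma)$, which degenerates to a single point exactly when $g(\overline{S})$ and $g(\overline{D \setminus S})$ meet at a single value.
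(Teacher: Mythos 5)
Your proof is correct and uses the same central idea as the paper: parameterize $\Gamma$ by the anti-diagonal (slower-than-light) coordinate and show the parameter set is a closed interval. The execution differs in a few places that are worth noting. The paper's proof defines the inverse map $f(u)$ directly (as the unique point of $\Gamma$ on the line $y=x+u$, via Lemma~\ref{lem:cont}), establishes convexity of $\dom(f)$ by a direct argument, proves the explicit bi-Lipschitz bounds $v-u\leq|f(v)-f(u)|\leq\sqrt2(v-u)$, and then concludes $\dom(f)$ is compact and convex via Heine--Borel. You instead establish injectivity of $g=x-y$ on $\Gamma$ from the fact that $\Gamma$ contains no $\prec$-comparable pair (itself derived from downward closure of $\overline S$ and upward closure of $\overline{D\setminus S}$), identify $g(\Gamma)$ as $g(\overline S)\cap g(\overline{D\setminus S})$ via connectedness of each slower-than-light line segment, use the star-shapedness of $\overline S$ and $\overline{D\setminus S}$ to see each image is a closed interval, and finally invoke the general fact that a continuous bijection from a compact space onto a Hausdorff space is a homeomorphism in place of the explicit Lipschitz estimate. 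The two routes are comparable in length; yours avoids the explicit metric estimate but requires a bit more topological bookkeeping (star-shapedness, the intersection identity, the compact-to-Hausdorff theorem), while the paper's explicit Lipschitz bound is self-contained and gives slightly more quantitative information, which is harmless here but closer to the spirit of the other metric arguments (e.g.\ Lemma~\ref{lem:apart}) used later in the paper.
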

\begin{proof}  Since $S, D\setminus S$  are non-empty, $\Gamma$ is non-empty.   For $u\in\reals$ the line $y=x+u$ is a slower than light line.  Define a partial  function $f:\reals \rightarrow \Gamma$ by letting $f(u)$ be the unique point in $\Gamma$ on the line $y=x+u$ if the line meets $\Gamma$, undefined otherwise, this is well defined by the previous lemma.  The range of $f$ is $\Gamma$ and the domain is a subset of $[-1,1]$.
If $u, v\in \dom(f)$ and $w$ is between $u$ and $v$  then since $S$ is downward closed, the line $y=x+w$ meets $S$ in  $(f(u)\wedge f(v))^\downarrow$  and meets  $D\setminus S$ in  $(f(u)\vee f(v))^\uparrow$  so by the previous lemma the line meets $\Gamma$ and $w\in \dom(f)$, so the domain of $f$ is convex. 
Furthermore,  for $u<v \in \dom(f)$ we have  $v-u\leq |f(v)-f(u)|\leq\sqrt2(v-u)$.  Hence $f$ is continuous, injective and has a continuous inverse.  The inverse of $f$ maps   $\Gamma$ into the reals, and $\Gamma$ is closed and bounded, hence compact.  It follows that  the range of $f^{-1}$ (i.e. the domain of $f$) is also compact so by  the Heine-Borel theorem it is closed and bounded. Thus $\dom(f)$ is a convex closed bounded subset of $\reals$, hence  a bounded closed segment or a single point.  Thus, $f$ is  the required homeomorphism.
\end{proof}

\begin{lemma}\label{lem:apart}
Let $\Gamma, \Gamma'$ be compact subsets of a metric space $(X, d)$.  Suppose for all $\epsilon>0$ there are $\x\in\Gamma$ and $\x'\in\Gamma'$ such that $d(\x, \x')<\epsilon$ then $\Gamma\cap\Gamma'\neq\emptyset$.
\end{lemma}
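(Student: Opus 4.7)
My plan is to prove the contrapositive via sequential compactness, which is the natural tool in metric spaces. For each positive integer $n$, the hypothesis (applied with $\epsilon = 1/n$) yields points $\x_n \in \Gamma$ and $\x'_n \in \Gamma'$ with $d(\x_n, \x'_n) < 1/n$. This gives two sequences, one in each compact set, that are asymptotically close.

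Next, since $\Gamma$ is compact and we are in a metric space, $\Gamma$ is sequentially compact, so the sequence $(\x_n)$ has a subsequence $(\x_{n_k})$ converging to some $\x \in \Gamma$. I would then use the triangle inequality
\[ d(\x'_{n_k}, \x) \leq d(\x'_{n_k}, \x_{n_k}) + d(\x_{n_k}, \x) < \frac{1}{n_k} + d(\x_{n_k}, \x) \]
to conclude that $\x'_{n_k} \to \x$ as well. Since $\Gamma'$ is compact and hence closed in $(X, d)$, and $(\x'_{n_k})$ is a sequence in $\Gamma'$ converging to $\x$, we get $\x \in \Gamma'$. Therefore $\x \in \Gamma \cap \Gamma'$, which is thus non-empty.

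There is no real obstacle here; the argument is essentially a textbook application of sequential compactness combined with closedness of compact sets in a metric space. The only care needed is to cite the two metric-space facts being used (compactness implies sequential compactness, and compactness implies closedness) rather than assume them tacitly. An alternative, equally short route would be to observe that the distance function $d : \Gamma \times \Gamma' \to \reals$ is continuous on the compact product $\Gamma \times \Gamma'$, hence attains its infimum; the hypothesis forces this infimum to be $0$, so some pair $(\x, \x') \in \Gamma \times \Gamma'$ has $d(\x, \x') = 0$ and thus $\x = \x' \in \Gamma \cap \Gamma'$. Either formulation yields the result in a few lines.
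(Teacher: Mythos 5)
Your primary argument via sequential compactness is correct and complete, and it differs from the paper's proof, which is exactly the ``alternative, equally short route'' you sketch at the end: the paper treats $d$ as a continuous map on the compact product $\Gamma\times\Gamma'$, applies the extreme value theorem to obtain a minimizing pair $(\x,\x')$, and concludes $d(\x,\x')=0$. Both routes rest on the same underlying facts about compact metric spaces, so the difference is mostly one of packaging. Your sequential-compactness version is slightly more hands-on and avoids invoking the extreme value theorem or the compactness of the product $\Gamma\times\Gamma'$ (though it still uses closedness of $\Gamma'$ and sequential compactness of $\Gamma$); the paper's version is a one-liner once the relevant theorem is cited and perhaps generalizes more cleanly to settings where ``attains infimum'' is the phrasing one wants to reuse. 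Either is a fine proof; you have in effect given both.
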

\begin{proof}
The metric $d$ is a continuous function from the compact space $\Gamma\times\Gamma'$ to non-negative reals.  By the extreme value theorem this function attains its infimum, i.e. there are $\x\in\Gamma,\;\x'\in\Gamma'$ such that for all $\y\in\Gamma$ and $\y'\in\Gamma'$ we have $d(\x,\x')\leq d(\y, \y')$.  By the supposition in the lemma, $d(\x,\x')<\epsilon$ for all $\epsilon>0$, hence $d(\x, \x')=0$ and $\x=\x'\in\Gamma\cap\Gamma'$.
\end{proof}

\medskip

For $X\subseteq\reals^2$, the \emph{upper boundary} of $X$ consists of those points $\x$ in the boundary of $X$ such that $\y\succ\x\rightarrow \y\not\in X$ and the lower boundary consists of those $\x$ in the boundary such that $\y\prec\x\rightarrow\y\not\in X$.  For example, if $X$ is a  rectangle then the upper boundary consists of the edges labelled $N, E$  and the corners labelled $\l, \r, \t$ in figure~\ref{fig:rectangle}(a) , while the lower boundary consists of the edges labelled $S$ and $W$ and the corners labelled $\b, \l, \r$.

\section{Boundary Maps}\label{sec:BM}
Throughout this paper, given two partial functions $f, g$ we write $f(x)=g(x)$ to mean that $f(x), g(x)$ are either both defined and equal or both undefined.

\begin{definition}\label{def:bm} See figure~\ref{fig:rectangle}(a).
A \emph{ proper boundary map} $\partial$ is  a partial map from $\set{N, S, E, W} \cup \set{\t, \l, \r, \b} \cup\set{-, +}$  to traces (for the first four listed), \MCSs (for the next four) or clusters (for the last two) such that $\set{-, +}\subseteq \dom(\partial)$  and: 
\begin{itemize}
\item  every future defect of $\partial(+)$ must be passed up to the final cluster of $\partial(N)$ (which must be defined in this case) or the final cluster of $\partial(E)$ (which must be defined in this case),
\item  if $N\in \dom(\partial)$ then $\partial(+)$ is less than or equal to the 
final cluster of $\partial(N)$,
\item  $\t\in \dom(\partial) \iff \set{N, E}\subseteq \dom(\partial)$ and if this holds $\partial(N), \partial(E)\leq\partial(\t)$ and all $\P$ defects of $\partial(\t)$ are passed down to 
either the final cluster of $\partial(N)$ or the final cluster of $\partial(E)$,
\item $\l\in\dom(\partial)\iff \set{W, N}\subseteq\dom(\partial)$ and if this holds $\partial(W)\leq\partial(\l)\leq\partial(N)$,  
\item dual properties obtained by swapping $(\F, \P), (\t,\b), (N, W), (S, E)$, 
$(+, -),$ $($up/down$)$ throughout, or/and by swapping $(\l, \r), (N, E), (S, W)$ throughout.
\end{itemize}
A \emph{one point boundary map} $\partial$ is a constant map from $\set{\t, \l, \r, \b}$ to \MCSx, i.e. $\partial(\t)=\partial(\l)=\partial(\r)=\partial(\b)\in \MCSx$.  For any $m\in \MCS$ let $\partial_m$ denote the one point boundary map defined by $\partial_m(\b)=m$.
\end{definition}
It follows that the domain of a proper boundary map  (ignoring $-, +$) corresponds to a boundary of one of the 16 proper rectangles (see figure~\ref{fig:rectangles}).  
A proper boundary map  $\partial$ is \emph{open} if $\dom(\partial)=\set{-, +}$, it is \emph{closed} if $\dom(\partial)=\set{N, S, E, W, \t, \l, \r, \b, -, +}$.   One point boundary maps are  closed.  The \emph{height} of a proper boundary map $\partial$ is the maximum length of a chain of distinct clusters from $\partial(-)$ up to $\partial(+)$.  

A \emph{past defect} of a proper boundary map $\partial$ is either a past defect of $\partial(\b)$, a past defect of  either $\partial(S)$ or $\partial(W)$ not passed down to $\partial(\b)$, a past defect of $\partial(\l)$ not passed down to the final cluster of $\partial(W)$, or a past defect of $\partial(\r)$ not passed down to the final cluster of $\partial(S)$ (in each case, only if the relevant MSC, cluster or trace is defined, of course).  A past defect of a one point boundary map $\partial_m$ is a past defect of $m$.   A \emph{future defect} of a boundary map is defined similarly.  
\begin{definition}
Let $h$ be a rectangle model.  Define a proper boundary map $\partial^h$ as follows.  $\partial^h(-)$ is the lower cluster of $h$, \/ $\partial^h(+)$ is the upper cluster.   If $\dom(h)$ includes a point $\x$ at one of its four corners $c$ then $\partial^h(c)=h(\x)$, otherwise $\partial^h(c)$ is not defined,  (e.g. if $\dom(h)$ includes its infimum $\x$ then $\partial^h(\b)=h(\x)$),\/ if $\dom(h)$ includes one of its edges,    say the edge is the open line segment $l\subseteq \dom(h)$ located on the edge in the direction $d\in\set{N, S, E, W}$ of the rectangle, then $\partial_h(d)=h(l)$, otherwise it is undefined.

Let $h$ be a rectangle model and let $R$ be any rectangle such that $\dom(h)\cap R$ is a proper rectangle.  We write $\partial(h, R)$ for the boundary map $\partial^{h\restr{R}}$.   If $\x\in \dom(h)$ we let $\partial(h, \set\x)$ be the one point boundary map $\partial_{h(\x)}$.
\end{definition}
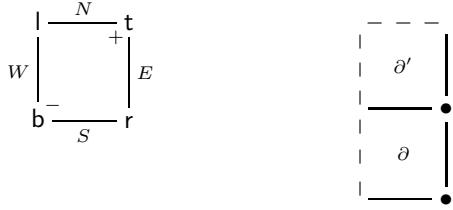
\begin{figure}
\[
\xymatrix{
\l\ar@{-}[r]^N&\t\ar@{-}[d]^E\\
\b\ar@{-}[u]^W\ar@{-}[r]_S\ar@{}[ru]|<{-}|>{+}&\r
}\hspace{1in}
\xymatrix{
\ar@{--}[r] \ar@{--}[d]  \ar@{}[dr]|{\partial' } &    \ar@{-}[d] \ar@{-}[d] \ar@{-}[d]\\
\ar@{--}[d] \ar@{-}[r]\ar@{-}[r]  \ar@{}[dr]|{\partial }  &   \bullet \ar@{-}[d]\ar@{-}[d]   \\
\ar@{-}[r] \ar@{-}[r]       &    \bullet
}
\]
\caption{\label{fig:rectangle} (a) A closed rectangle with its boundaries shown, (b) \label{fig:match} $\partial'$ fits to the North of $\partial$.  Dashed lines represent open edges, solid lines represent closed edges.  Here $\dom(\partial')=\set{S, E, \r, -, +},\; \dom(\partial)=\set{S, E, N, \r, \t, -, +}$. }
\end{figure}
We have three ways of building bigger proper boundary maps out of smaller ones.
\begin{definition}\label{def:combs}
\begin{description}
\item[Joins]
Given two proper boundary maps $\partial, \partial'$ we say $\partial'$ fits to the North of $\partial$ if  $\partial(N)$ is defined, \/ $\partial(N)=\partial'(S)$, \/ $\partial(\l)=\partial'(\b)$ and $\partial(\t)=\partial'(\r)$.  An example  is shown in figure~\ref{fig:match}(b).   When this happens we write $(\partial \oplus_{N}\partial')$ for the proper boundary map defined by
\begin{itemize}
\item $(\partial \oplus_{N}\partial')(\b)=\partial(\b),\; (\partial \oplus_{N}\partial')(\t)=\partial'(\t),\; (\partial \oplus_{N}\partial')(\l)=\partial'(\l)$ and $(\partial \oplus_{N}\partial')(\r)=\partial(\r)$,
\item $(\partial \oplus_{N}\partial')(S)=\partial(S),\; (\partial \oplus_{N}\partial')(W)=\partial(W)+_{\partial(l)}\partial'(W)$, with similar definitions of the other two boundary traces.
\end{itemize}
Following our convention for partial functions the definition  $(\partial \oplus_{N}\partial')(\b)=\partial(\b)$ makes the left hand side undefined if $\partial(\b)$ is undefined.  
We have similar definitions for joining two proper boundary maps in the other three directions.  
%

\item[Limits]
  We write $\partial \stackrel{W}{\rightarrow}\partial^*$  (see figure~\ref{fig:limit}) if 
\begin{itemize}

\item  $\partial$ fits to the West of $\partial$ (hence $\partial=\partial\oplus_W\partial$, and if $\partial(S)$ is defined it consists of a single cluster, similarly for $\partial(N)$),
\item $\partial^*$ is identical to $\partial$ except on $W, \b, \l$.   If defined, $\partial^*(W)\leq \partial^*(-)$ (i.e. all \MCSs and clusters of $\partial^*(W)$ are $\leq\partial^*(-)$) and every future defect of $\partial^*(W)$ is either passed up to $\partial^*(\l)$ or  $\partial^*(-)$.
\end{itemize}
Again, by our convention for partial functions, in the last line we mean that  if $\partial^*(W)$ is defined then any future defect is either passed up to $\partial^*(\l)$, which must be defined in this case, or it is passed up to $\partial^*(-)$.
We have similar definitions of $\partial\stackrel{d}{\rightarrow}\partial^*$, for $d\in\set{N, S, E}$.  
\begin{figure}  
\begin{center}
\newcommand\Square[1]{+(-#1,-#1) rectangle +(#1,#1)}
\begin{tikzpicture}[scale=.8]
\draw(0,2) \Square{2};

\draw [fill] (2,1) circle [radius=2.0pt];
\draw [fill] (2,3) circle [radius=2.0pt];
\draw [fill] (-2,1) circle [radius=2.0pt];
\draw [fill] (-2,3) circle [radius=2.0pt];

\node[left] at (-2, 1){$\w_0$};
\node[left] at (-2, 3){$\w_1$};
\node[right] at (2,1){$\e_0$};
\node[right] at (2,3){$\e_1$};

\node [right] at (-2,1) {$m_0$};
\node [right] at (-2,3) {$m_1$};
\node [left] at (-2.1,2) {$c_1$};
\node [left] at (-2.1,.5) {$c_0$};
\node [left] at (-2.1,3.5) {$c_2$};
\node[right] at (-2, 2) {$\partial(W)$};
\node[above]  at (0,4) {$\partial(N)$};
\node[below]  at (0,0) {$\partial(S)$};
\node[right]  at (2,2) {$\partial(E)$};
\node [left] at (2,1) {$m_0$};
\node [left] at (2,3) {$m_1$};
\node [left] at (2,2) {$c_1$};
\node [left] at (2,.5) {$c_0$};
\node [right] at (-2,.5) {$=\partial(-)$};
\node [left] at (2,3.5) {$c_2$};
\node [right] at (-2,3.5) {$=\partial(+)$};
\node at (0,2){$\partial$};

\end{tikzpicture}
\begin{tikzpicture}[scale=.8]
\draw(8,2)\Square{2};

\draw [fill] (10,1) circle [radius=2.0pt];
\draw [fill] (10,3) circle [radius=2.0pt];
\draw [fill] (8,3.5) circle [radius=2.0pt];
\draw [fill] (8, 2.5) circle [radius=2.0pt];
\draw [fill] (7, 3.75) circle [radius=2.0pt];
\draw [fill] (7,3.25) circle [radius=2.0pt];
\draw [fill] (6.5, 3.885) circle [radius=2.0pt];
\draw [fill] (6.5,3.625) circle [radius=2.0pt];

\draw [fill] (6.25,3.9375) circle [radius=2.0pt];
\draw [fill] (6.25,3.8125) circle [radius=2.0pt];

\draw [fill] (6.125,3.90626) circle [radius=2.0pt];
\draw [fill] (6.125,3.96875) circle [radius=2.0pt];

\draw[ultra thick](6,0)--(6,4);

\draw (8,0)--(8,4);
\draw (7,0)--(7,4);
\draw (6.5,0)--(6.5,4);
\draw (6.25,0)--(6.25,4);
\draw (6.125,0)--(6.125,4);

\node at (9,2) {$\partial$};
\node at (7.5,2) {$\partial$};
\node at (6.75,2) {$\partial$};

\node[above]  at (8,4) {$\partial^*(N)=\partial(N)$};
\node[below]  at (8,0) {$\partial^*(S)=\partial(S)$};
\node[right]  at (10,2) {$\partial^*(E)=\partial(E)$};
\node[left]  at (6,2) {$\partial^*(W)$};

\node [left] at (10,1) {$m_0$};
\node [left] at (10,3) {$m_1$};
\node [left] at (10,2) {$c_1$};
\node [left] at (10,.5) {$c_0$};
\node [left] at (10.3,3.7) {$\partial^*(+)=c_2$};

\node [overlay] at (7.5,.5) {$\partial^*(-)=\partial(-)$};

\end{tikzpicture}
\end{center}
\caption{\label{fig:limit} $\partial\stackrel{W}{\rightarrow}\partial^*$ and $\partial^*$ is a Western limit of $\partial$.. }
\end{figure}

\item[Shuffles]
Suppose $\partial(-)<\partial(+)$,\/ $\set{\partial_i:i\in I}$ is a set of closed boundary maps including at least one one point boundary map, and $\partial$ is a proper boundary map.  We say that $\partial$ is a shuffle of $\set{\partial_i:i\in I}$  if
\begin{enumerate}
\item  for all $i\in I$ we have  $\partial(-)\leq \partial_i(\b)\leq\partial_i(\t)\leq\partial(+)$, \label{sh:1}
\item
 every future defect of $\partial(-)$ is passed up to  $\partial_i(\b)$ for some $i\in I$,\label{sh:2}  
\item   every past defect of $\partial_i$ is passed down to $\partial(-)$ (for all $i\in I$),\label{sh:3}
\item  if defined, $\partial(S)\leq\partial(-)$ and every  future defect of $\partial(S)$ is passed up to $\partial(\r)$  or  $\partial(-)$; 
if defined, $\partial(W)\leq\partial(-)$  and every  future defect of $\partial(W)$ is passed up to $\partial(\l)$ or $\partial(-)$, \label{sh:4}
\item dual properties obtained by swapping the pairs \\
$(\leq, \geq), (-, +), (N, W), (S, E), (\b, \t), (\mbox{up}, \mbox{down}), \; (\F, \P)$, throughout,\label{sh:6}
\end{enumerate}
see figure~\ref{fig:case2} for an illustration of a closed boundary map of this form.  
 
\end{description}
\end{definition}

The following lemma is quite straightforward and we omit the proof.  Recall from section~\ref{sec:rectangles} that $N(\x)$ is the half-plane of points no further South than $\x$.
\begin{lemma}\label{lem:split}
Let $h$ be a rectangle model and let $\x\in Int(\dom(h))$.  We have
\[ \partial^h=\partial(h, S(\x))\oplus_N\partial(h, N(\x))\]
and three similar decompositions (using $\oplus_S, \oplus_E, \oplus_W$) also hold.
\end{lemma}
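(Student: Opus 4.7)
The plan is to verify the identity component-by-component by unpacking the three definitions involved ($\partial^h$, $\partial(h,\cdot)$, and $\oplus_N$).  The three other decompositions follow by entirely symmetric arguments, so I focus on $\oplus_N$.

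Writing $\dom(h)=I_x\times I_y$ and $\x=(x,y)$, the hypothesis $\x\in Int(\dom(h))$ gives $y\in Int(I_y)$, so $D_-=\dom(h)\cap S(\x)=I_x\times(I_y\cap(-\infty,y])$ and $D_+=\dom(h)\cap N(\x)=I_x\times(I_y\cap[y,\infty))$ are both proper rectangles sharing the horizontal segment $I_x\times\set y$ as a common boundary.  Hence $\partial(h,S(\x))$ and $\partial(h,N(\x))$ are defined proper boundary maps, and the fit condition for $\oplus_N$ is immediate: the northern open edge of $D_-$ and the southern open edge of $D_+$ are both the open segment $(\inf I_x,\sup I_x)\times\set y$ and yield the same trace by Definition~\ref{def:trace2}; the corresponding corners of $D_-$ (namely $\l,\t$) and of $D_+$ (namely $\b,\r$) are the points $(\inf I_x,y)$ and $(\sup I_x,y)$, defined in $\dom(h)$ precisely when $\inf I_x\in I_x$ and $\sup I_x\in I_x$ respectively, and simultaneously undefined otherwise.

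For the equality itself, the four corners, the $S$- and $N$-traces, and the lower/upper clusters of $\partial^h$ are easily matched with those prescribed by the join: $\b,\r$ and $S$ are inherited from $D_-$; $\t,\l$ and $N$ are inherited from $D_+$; and any $\x_0\prec\x$ in $Int(\dom(h))$ witnessing the lower cluster of $h$ lies in $D_-$ and witnesses the lower cluster of $h|_{D_-}$, because $\set{\y:\y\prec\x_0}\subseteq S(\x)\cap\dom(h)$; dually for the upper cluster.

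The delicate step is the combined $W$-trace (and, symmetrically, the $E$-trace).  If $\dom(h)$ has no western edge then the western traces and the $\l$-corner are undefined on both sides and the equality is vacuous.  Otherwise the cut point $\l=(\inf I_x,y)$ lies in $\dom(h)$, and I must show
\[ \trace(h,W)=\trace(h,W_-)+_{h(\l)}\trace(h,W_+),\]
where $W,W_-,W_+$ denote the full and the two half open western edges.  By Definition~\ref{def:trace2} this reduces to a case split on whether $y$ corresponds to one of the transition points $\x_i$ of $\trace(h,W)$: if it does, the final cluster $c_k$ of $\trace(h,W_-)$ is strictly below the initial cluster $c'_0$ of $\trace(h,W_+)$, so the ``$c_k<c'_0$'' clause of $+_m$ correctly splices in $h(\l)=m_i$ at the junction; if it does not, the two clusters coincide and $h(\l)$ lies in that shared cluster, so the ``$c_k=c'_0$'' clause merges the subtraces seamlessly.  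I do not anticipate any serious obstacle beyond this bookkeeping, which is essentially the observation that a trace along a line segment equals the concatenation of its two half-traces through the MCS at the midpoint.
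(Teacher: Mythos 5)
The paper actually omits this proof entirely, remarking only that it ``is quite straightforward.'' Your proof is a correct fleshing-out of the expected direct verification, and you have isolated the one place where care is genuinely needed: the splicing of the Western (and Eastern) traces at the cut point. Your case split is the right one. When the cut point is a transition point $\x_i$ of $\trace(h,W)$, the lower half-trace ends in the cluster $c_i$ and the upper half-trace begins with $c_{i+1}>c_i$, so the ``$c_k<c'_0$'' branch of $+_m$ reinserts $m_i=h(\x_i)$ at the seam; when the cut point lies in the interior of a cluster region, both half-traces share that cluster at the junction and the ``$c_k=c'_0$'' branch correctly forgets the (reflexive) MCS at the cut point, which indeed does not appear in $\trace(h,W)$. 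Two cosmetic remarks: the set inclusion ``$\set{\y:\y\prec\x_0}\subseteq S(\x)\cap\dom(h)$'' should be intersected with $\dom(h)$ on the left for it to be literally true, though your intent (every point of $\dom(h)$ below $\x_0$ is in $D_-$, hence $\x_0$ also witnesses the lower cluster of $h\restr{D_-}$) is right; and strictly one should also observe that $h\restr{D_-}$ and $h\restr{D_+}$ are themselves rectangle models (coherence is inherited, and any defect with witness escaping the half-plane can be passed to the newly created closed edge at height $y(\x)$, which lies in $\dom(h)$ because $\x\in Int(\dom(h))$), so that $\partial^{h\restr{D_\pm}}$ really are proper boundary maps; but that check is at the same level of routine as the rest and is in the spirit of the paper's ``we omit the proof.''
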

   
\begin{figure}
\begin{center}
\begin{tikzpicture}[scale=.75]
\newcommand\Square[1]{+(-#1,-#1) rectangle +(#1,#1)}
\draw (0,0) -- (5,0) --(5,5) --(0,5) -- (0, 0);
\node[above] at (2.5, 5) {$\partial(N)$};
\node[below] at (2.5, 0){$\partial(S)$};
\node[left] at (0, 2.5){$\partial(W)$};
\node[right] at (5, 2.5){$\partial(E)$};
\node[above, left] at (0, 5){$\partial(\l)$};
\node[above, right] at (5, 5){$\partial(\t)$};
\node[below, left] at (0, 0){$\partial(\b)$};
\node[below, right] at (5, 0){$\partial(\r)$};
\draw (3, 2) \Square{0.4};
\node at (3, 2) {$\partial_2$};
\draw (2, 3)\Square{0.3};
\node at (2, 3) {$\partial_3$};
\draw(1,4)\Square{0.1};
\draw(4, 1)\Square{0.2};
\node at (4, 1) {$\partial_2$};

\draw [fill] (4.3,0.7) circle [radius=1.5pt];

\draw [fill] (3.7,1.3) circle [radius=1.5pt];

\draw [fill] (4.5,0.5) circle [radius=1.5pt];
\draw [fill] (.5,4.5) circle [radius=1.5pt];
\draw [fill] (2.5,2.5) circle [radius=1.5pt];
\draw [fill] (1.5,3.5) circle [radius=1.5pt];
\draw [fill] (3.5,1.5) circle [radius=1.5pt];
\draw [fill] (.2,4.8) circle [radius=1.5pt];
\draw [fill] (4.8,0.2) circle [radius=1.5pt];
\node [below] at (4.5, .5) {$m_1$};

\node [below] at (2.3, 2.5) {$m_0$};

\node [below] at (.5, 4.5) {$m_1$};
\node [below] at (2.6,1.6) {$\partial_2(\b)$};
\node [above] at (3.4,2.3) {$\partial_2(\t)$};f
\draw [fill] (3.4, 2.4) circle [radius=1.5pt];
\draw [fill] (2.6, 1.6) circle [radius=1.5pt];
\draw [fill] (0,0) circle [radius=1.5pt];
\draw [fill] (0,5) circle [radius=1.5pt];
\draw [fill] (5, 0) circle [radius=1.5pt];
\draw [fill] (5, 5) circle [radius=1.5pt];

\node at (1, 1) {$\partial(-)$};
\node at (4, 4) {$\partial(+)$};

\end{tikzpicture}
\end{center}
\caption{\label{fig:case2} A shuffle of $\set{\partial_i:i\in I}$.   The points labelled $m_0, m_1, \ldots$ represent one point boundary maps. }
\end{figure}
\begin{lemma}\label{lem:shuffles}
Let $\partial$ be a shuffle of $\set{\partial_i:i\in I}$.   Either there is $i_0, i_1\in I$ such that $\partial$ is a shuffle of  $\set{\partial_{i_0}, \partial_{i_1}}$ (note that at least one of these two must be a one point boundary map), or there is $I_0\subseteq I$  such that each proper boundary map $\partial_i$  (for $i\in I_0$) has height strictly less than the height of $\partial$, \/ $\partial$ is a shuffle of $\set{\partial_i:i\in I_0}$, and $| I_0|\leq |\phi|$.
\end{lemma}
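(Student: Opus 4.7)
My plan is a case split based on whether $I$ contains a proper $\partial_{i_0}$ of \emph{full height}, meaning $\partial_{i_0}(\b)\sim\partial(-)$ and $\partial_{i_0}(\t)\sim\partial(+)$; by clause~(1) of the shuffle definition this is the maximum height a $\partial_i$ can have, and it forces $\partial_{i_0}(-)=\partial(-)$ and $\partial_{i_0}(+)=\partial(+)$.

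\textbf{Case A.} Suppose such a proper $\partial_{i_0}\in I$ exists. The key observation (already noted in the filtration preliminaries) is that two \MCSs in a common cluster contain exactly the same temporally bound formulas. Hence $\partial_{i_0}(\b)$ and every element of $\partial(-)$ hold exactly the same $\F$-formulas, so every future defect of $\partial(-)$ is trivially passed up to $\partial_{i_0}(\b)$; dually, every past defect of $\partial(+)$ is passed down to $\partial_{i_0}(\t)$. Let $\partial_{i_1}\in I$ be any one-point boundary map, which must exist by the original shuffle definition. I would then verify that $\{\partial_{i_0},\partial_{i_1}\}$ is a shuffle of $\partial$: the universally quantified clauses (clause (1), clause (3), and their duals in (5)) are inherited from the original shuffle for these two particular indices; the existentially quantified clauses (clause (2) and its dual in (5)) are discharged by $\partial_{i_0}$ as just argued; and the $\partial_i$-free clauses (clause (4) and its dual in (5)) are conditions on $\partial$ alone.

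\textbf{Case B.} Otherwise every proper $\partial_i\in I$ has height strictly less than that of $\partial$. For each future defect $\F\psi$ of $\partial(-)$, pick some $i_{\F\psi}\in I$ for which $\F\psi$ is passed up to $\partial_{i_{\F\psi}}(\b)$ (such an index exists by clause~(2) applied to the original shuffle); dually, for each past defect $\P\theta$ of $\partial(+)$, pick $j_{\P\theta}\in I$. Let $I_0$ be the set of these chosen indices, augmented by one index of a one-point map from $I$ if none has been selected yet. The universally quantified shuffle clauses descend to any subset $I_0\subseteq I$, the existential clauses hold by construction, the $\partial_i$-free clauses are unaffected, $I_0$ contains a one-point map by construction, and every proper $\partial_i\in I_0$ has height strictly less than that of $\partial$ by the hypothesis of this case.

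The size bound reduces to counting. Each defect is a distinct $\F$- or $\P$-subformula of $\phi$, and any such subformula occupies at least two characters, so there are at most $\lfloor|\phi|/2\rfloor$ defects in total; hence $|I_0|\leq\lfloor|\phi|/2\rfloor+1\leq|\phi|$ whenever $|\phi|\geq 2$ (the cases $|\phi|\leq 1$ admit no temporal operators and are vacuous). The main obstacle is essentially bookkeeping: systematically verifying that each of the five shuffle clauses really does transfer to the smaller index set (separating the universal, existential, and $\partial_i$-free parts), and confirming that the counting of defects plus the possible extra one-point map never exceeds $|\phi|$.
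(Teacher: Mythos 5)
Your proof takes essentially the same route as the paper: the same case split on whether some proper $\partial_{i_0}$ has full height (forcing $\partial_{i_0}(\b)\in\partial(-)$ so that $\F\psi\in\partial_{i_0}(\b)$ for every future defect $\F\psi$), the same use of the fact that \MCSs\ in a common cluster agree on temporally bound formulas, and the same choice of $I_0$ as a defect-witnessing subset augmented by one one-point map. One small slip in Case B: the inference ``any such subformula occupies at least two characters, so there are at most $\lfloor|\phi|/2\rfloor$ defects'' is not valid, since distinct $\F$- and $\P$-subformulas overlap (e.g.\ $\F\F p$ has length $3$ yet two $\F$-subformulas). The correct observation is that each distinct $\F$- or $\P$-subformula of $\phi$ begins at a distinct occurrence of an $\F$ or $\P$ symbol, so the number of potential defects is at most $|\phi|-1$; adding the one-point map still gives $|I_0|\leq|\phi|$, so the conclusion stands.
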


\begin{proof}
If there is $i_0\in I$ such that $\partial_{i_0}$ is proper and has the same height as $\partial$ (so $\partial_{i_0}(\b)\in\partial_{i_0}(-)=\partial(-)$ and $\partial_{i_0}(\t)\in\partial_{i_0}(+)=\partial(+)$) then every future defect $\F\psi$ of $\partial(-)$ is passed up to $\partial_{i_0}(\b)$ (since $\F\psi\in\partial_{i_0}(\b)$) and every past defect of $\partial(+)$ is passed down to $\partial_{i_0}(\t)$.  Hence, $\partial$ is a shuffle of $\set{\partial_{i_0},  \partial_{i_1}}$,  for an arbitrary $i_1\in I$ such that $\partial_{i_1}$ is a one point boundary map.  

If there is no such $i_0\in I$ then each proper $\partial_i$ (for $i\in I$) has height strictly less than the height of $\partial$.  Define a subset $I_0\subseteq I$ so that for each future defect of $\partial(-)$  there is $i\in I_0$ such that the defect is passed up to $\partial_i(\b)$ and similarly for each past defect of $\partial(+)$ the defect is  passed down to $\partial_i(\t)$ for some $i\in I_0$, also include at least one $i\in I_0$ where $\partial_i$ is a one point boundary map. Since the total number of subformulas of $\phi$ (hence the total number of defects) is less than $|\phi|$ we can select $I_0$ resolving all defects of $\partial(-), \partial(+)$ and such that $| I_0|\leq |\phi|$.    By choice of $I_0$ it follows that $\partial$ is a shuffle of $\set{\partial_i:i\in I_0}$.

\end{proof}

\begin{definition}\label{def:undecomp} Let $h$ be a proper rectangle model such that $\partial^h=\partial^h\oplus_W\partial^h$, let $\partial^h(W)=(c_0, m_0, \ldots, c_k)$, for some $k\geq 0$.  For each $i<k$ let $\w_i=\x(h, W, i)$ be the point on the Western edge of $\dom(h)$ witnessing $m_i$ and let $\e_i=\x(h, E, i)$ be the corresponding point on the Eastern edge (see definition~\ref{def:trace2}, also see  figure~\ref{fig:vdm}).  We say that $h$ is a \emph{vertically displaced} rectangle model if for each $i<k$ it is not the case that $\e_i$ is due East from $\w_i$.  Horizontally displaced rectangle models are defined similarly.
\end{definition}

\begin{lemma}\label{lem:joins}
\begin{enumerate}
\item Let $h, h'$ be rectangle models such that $\partial^{h'}$ fits to the North of $\partial^h$.  There is a rectangle model $g$ such that $\partial^{h}\oplus_N\partial^{h'}=\partial^g$.  
\item 
If  $\partial^h\stackrel{W}\rightarrow\partial^*$ where $h$ is a vertically displaced rectangle model,  there is a rectangle model $g$ such that $\partial^*=\partial^g$.
\item  For each $i\in I$ suppose either $\partial_i=\partial^{h_i}$ for some proper, closed rectangle model $h_i$ or $\partial_i=\partial_{m_i}$, for some $m_i\in \MCSx$, and for at least one $i\in I$ we have $\partial_i=\partial_{m_i}$.  If $\partial$ is a  shuffle of $\set{\partial_i:i\in I}$ then there is a  rectangle model $h$ such that $\partial=\partial^h$.
\end{enumerate}
\end{lemma}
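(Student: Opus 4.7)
The plan is to construct $g$ in each case by an explicit geometric construction from the given rectangle models (and one-point MCSs in part (3)), and then verify coherence and no-internal-defects hold and that $\partial^g$ is the stated boundary map.

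For part (1), I would apply Lemma~\ref{lem:equiv} to translate and horizontally rescale $\dom(h')$ so that its bottom edge coincides with the top edge $e$ of $\dom(h)$, and with a further horizontal rescaling of one of the two domains arrange that the witnessing points of the shared trace $\partial^h(N) = \partial^{h'}(S)$ on $e$ coincide pointwise. Setting $g = h \cup h'$ then gives a well-defined map on the combined domain. Coherence for pairs $\x \leq \y$ crossing $e$ is obtained by factoring through a point of $e$, and $\partial^g = \partial^h \oplus_N \partial^{h'}$ is immediate from the construction. For no-internal-defects at a point $\x \in e$ (now in the interior of $\dom(g)$), note that $g(\x) = h(\x) = h'(\x)$ and any future defect there is dealt with by $h'$: either resolved inside $\dom(h')$ (hence inside $\dom(g)$) or passed to an outer boundary of $\dom(h')$ which remains a boundary of $\dom(g)$; past defects are handled dually via $h$.

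For part (2), I would iterate the westward join. Starting from $h$, place successive copies $h_0 = h, h_1, h_2, \ldots$, with each $h_i$ rescaled by Lemma~\ref{lem:equiv} to horizontal width $2^{-i}$ times that of $h$ and translated immediately to the west of $h_{i-1}$, so the total width is finite. Vertical displacement (Definition~\ref{def:undecomp}) forces the point witnessing each $m_j$ in $\partial^h(W) = (c_0, m_0, \ldots, c_k)$ to migrate monotonically in one vertical direction across successive copies, and hence to converge as $i \to \infty$ to a well-defined point on the westernmost limit line $\ell$. If $\partial^*(W)$ is defined, include $\ell$ in $\dom(g)$ and label it to realise the trace $\partial^*(W)$ at these limit points, with clusters $\partial^*(-)$ and $\partial^*(+)$ on the two sides; if $\partial^*(W)$ is undefined, leave $\ell$ out so the western edge of $\dom(g)$ is open. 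Coherence across $\ell$ is guaranteed by $\partial^*(W) \leq \partial^*(-)$ (and its dual), and residual defects on $\ell$ are precisely those that the definition of $\partial^h \stackrel{W}{\rightarrow} \partial^*$ permits to be passed up to $\partial^*(\l)$ or $\partial^*(-)$.

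For part (3), the shuffle, set $\dom(g)$ to the rectangle of the type determined by $\dom(\partial)$, and inside its open interior pack a countable pairwise-disjoint family of open sub-rectangles on which rescaled copies of the proper $h_i$'s are placed (via Lemma~\ref{lem:equiv}), each such $h_i$ appearing densely in $\dom(g)$; assign the one-point MCSs $m_i$ (for $\partial_i = \partial_{m_i}$) to points of the residual, again densely distributed. Boundary edges present in $\dom(\partial)$ are labelled to realise the traces $\partial(S), \partial(W), \ldots$. Conditions (1)--(4) of Definition~\ref{def:combs} translate directly into the consistency needed: (1) places the range of every $\partial_i$ between $\partial(-)$ and $\partial(+)$, giving coherence between packed blocks and the surrounding clusters; (2)--(3) ensure that every future defect of $\partial(-)$ and every past defect of $\partial(+)$ is resolved by some densely packed block or one-point MCS; (4) handles boundary-trace compatibility. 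The main obstacle is this explicit two-dimensional dense-packing construction --- a Cantor-like arrangement of shrinking sub-rectangles whose residual set carries the one-point MCSs densely --- together with the verification of no-internal-defects at residual points, where the witnesses for a defect must be drawn from arbitrarily nearby packed copies, and the verification that the trace induced on every ordered line segment through $\dom(g)$ matches what the shuffle demands.
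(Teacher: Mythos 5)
Part (1) is essentially the paper's argument: apply Lemma~\ref{lem:order} to make the shared edges coincide pointwise and take the union. That part is fine.

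For part (2), your iteration with halving widths is the right skeleton, but there is a genuine gap in how you handle the limiting Western edge $\ell$. You observe that vertical displacement forces each witness $\w_i^{(n)}$ (for $m_i$ on the $n$th copy's West edge) to migrate monotonically upward, and you conclude they converge; you then propose to ``realise the trace $\partial^*(W)$ at these limit points.'' This is confused on two counts. First, if the witnesses for distinct $m_i$ converged to distinct points on $\ell$, coherence with the interior would force the trace along $\ell$ to look like a compressed copy of $\partial^h(W)=(c_0,m_0,\ldots,c_k)$, \emph{not} $\partial^*(W)$, which by definition only satisfies $\partial^*(W)\leq\partial^*(-)=c_0$ and is in general a completely different trace. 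Second, and more to the point, nothing in your construction controls where the witnesses converge. The paper's proof handles exactly this: it chooses $\lambda<1$ with (distance from $\w_i$ to $\l$) $<\lambda\cdot$ (distance from $\e_i$ to $\t$) for all $i<k$, and in each rescaling by Lemma~\ref{lem:order} preserves this bound, so that the distances decay geometrically and \emph{all} witnesses converge to the single corner $\l$. Only then is the open segment $\ell\setminus\{\l\}$ ``free'': for every $\x$ on it the labels approaching from the NE are eventually in $c_0=\partial^*(-)$, so coherence reduces exactly to $\partial^*(W)\leq\partial^*(-)$, and the witnesses for any future defect of $\partial^*(W)$ that is ``passed up to $\partial^*(-)$'' are actually available at interior points NE of $\x$. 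Without this control over the convergence, both coherence and the no-internal-defects condition along $\ell$ can fail, and the argument does not close.

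For part (3) you correctly identify a Cantor-like packing as the goal, but your description ``pack a countable pairwise-disjoint family of open sub-rectangles'' inside the interior omits the one geometric constraint that makes the construction work: the packed blocks must form an antichain. If two of the $\dom(h_i)$ copies were $\leq$-comparable, coherence would require $\partial_i(\b)\leq\partial_j(\t)$ for the lower block $i$ and upper block $j$, which does not follow from conditions (1)--(5) of a shuffle (all we know is that both lie between $\partial(-)$ and $\partial(+)$). The paper fixes this by centring every inserted closed rectangle on the anti-diagonal $x+y=1$ and filling the residual points of that diagonal with one-point boundary maps; the antichain arrangement is what ensures that the clusters $\partial(-),\partial(+)$ sit cleanly below and above every block and every residual point. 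You have also flagged this construction as the ``main obstacle'' you have not carried out, so this part is, by your own account, unfinished. Finally, note that the $h_i$ in the hypothesis are \emph{closed} proper rectangle models, so they must be placed on \emph{closed} sub-rectangles, not open ones.
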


\begin{proof}
Suppose $\partial^{h'}$ fits to the North of $\partial^h$.  By lemma~\ref{lem:order} we may assume that $\dom(h), \dom(h')$ are two adjacent rectangles (the latter North of the former) containing a common boundary edge on which they agree. So $h\cup h'$ is a well-defined rectangle model, the required rectangle model for the first part of the lemma. 

Now suppose  $\partial^h\stackrel{W}\rightarrow\partial^*$ where $h$ is a vertically displaced rectangle model.  Let $\partial^h(W)=(c_0, m_0, \ldots, c_k)$ and let $(\w_0, \ldots, \w_{k-1}), \;(\e_0, \ldots, \e_{k-1})$ be the points witnessing $m_i$ on the Western and Eastern edges of $\dom(h)$, as in definition~\ref{def:undecomp}, for $i<k$.   By coherence of $h$ the distance from $\w_i$ up to $\l$ is not more than the distance from $\e_i$ up to $\t$ and by vertical displacement, the former distance is strictly less than the latter, for each $i<k$.   See figure~\ref{fig:limit}.

Let $\lambda<1$ be such that for all $i<k$ the distance of $\w_i$ to $\l$ is less than $\lambda$ times the distance of $\e_i$ to $\t$.  By lemma~\ref{lem:order} there is a rectangle model $h'$ order equivalent to $h$ (with corners $\t', \l', \r', \b'$) where the horizontal dimension of $\dom(h')$ is half that of $\dom(h)$, where the vertical dimensions are the same, where the restriction of $h'$ to its Eastern edge is identical to the restriction of $h$ to its Western edge and where $\lambda$ remains an upper bound on the ratio of the distance from $\w'_i$ to $\l'$ to the distance of $\e'_i$ to $\r'$, for all $i<k$.  Iterating this construction, we obtain a sequence of rectangle maps (as shown in figure~\ref{fig:limit}) each one of half the horizontal dimension to the one before, with the same vertical dimensions, each with boundary map $\partial^h$, where the  restriction of one rectangle model to its West boundary is identical to the restriction of the next rectangle model to its  East boundary.  Now define $g$ as the union of all these rectangles together with a map  that sends its bottom and left corners to $\partial^*(\b), \partial^*(\l)$ respectively  (if defined)  and if  $\partial^*(W)$ is defined, $g$ is defined densely along the Western edge so that the trace of the Western edge (under $g$) is $\partial^*(W)$.  The conditions for  $\partial^h\stackrel{W}\rightarrow\partial^g$ ensure that $g$ is a rectangle model, clearly its boundary map is $\partial^*$. 

For the final part of the lemma, we build a rectangle model $h$ that looks like figure~\ref{fig:case2}, reminiscent of the Cantor set \cite{Can83}, but whereas Cantor sets are obtained by repeated deletion of  the middle open third of a closed segment, the sets used here are obtained by repeatedly inserting a closed rectangle onto the central third of an open segment.  In more detail, use $\partial$ to define $h$ densely along the edge of the rectangle $[0,1]\times[0,1]$, when the edge is included in the domain of $\partial$.  Initially, use $\partial(-)$ densely on the interior of the square below the diagonal $x+y=1$ and use $\partial(+)$ densely on the interior of the square above this diagonal.   We have still to define the value of $h$ at points on the diagonal  $x+y=1$ and we will modify $h$ at points close to the diagonal, but the boundary and the lower/upper clusters will not be changed, hence we already know that $\partial^h=\partial$.    
 
 Pick any one point boundary map $\partial_{m_i}$ (some $i\in I$) and let $h(\x)=m_i$ along the diagonal (but not at the endpoints: if these belong to the domain then they are given already by $\partial(\l), \partial(\r)$).  
   We consider the open segment along the diagonal as a single gap.  At a later stage there will be a finite number of gaps, all open segments.  Pick any gap and  pick $i\in I$.  If $\partial_i=\partial^{h_i}$ we place a copy of the rectangle model $h_i$ in the central third of the chosen gap overwriting the labels of the points covered by the copy of $h_i$ and replacing the gap by two gaps each of one third the original size.  If $\partial_i=\partial_{m_i}$ we change the label of the midpoint of the gap to $m_i$ and replace that gap by two gaps each one half of the original size.  Keep repeating this process making sure that in every gap, for each $i\in I$,  a copy of a rectangle model described by  $\partial_i$ (i.e. either $h_i$ or a single point $m_i$)  gets placed eventually.  Since points get overwritten at most once, the limit of this process is a well defined structure $h$.  Now check that $h$ is indeed a rectangle model, we already saw that $\partial^h=\partial$.  
\end{proof}

\begin{lemma}\label{lem:displaced}
Let $h$ be a rectangle model, $\partial^h=\partial^h\oplus_W\partial^h$, let $\partial(W)=(c_0, m_0, \ldots, c_k)$.  The following are equivalent.
\begin{enumerate}
\item  $h$ is vertically displaced,\label{cond:1}
\item \label{cond:2}
for each $i<k$ there are $j\leq i,\; j'\geq i+1$ and  $\x_i$ such that $\dom(h)\cap N(\x_i)), \; \dom(h)\cap S(\x_i))$ are proper rectangles and either
\begin{itemize}
\item
$\partial(h, S(\x_i))(W)=(c_0, \ldots, c_j)$  and $\partial(h, S(\x_i))(E)=(c_0, \ldots, c_{j'})$ or
\item
$\partial(h, N(\x_i))(E)=(c_{j'}, \ldots, c_k)$  and $\partial(h, N(\x_i))(W)=(c_{j}, \ldots, c_k)$
\end{itemize}
\item either\label{cond:3}
\begin{enumerate}
\item\label{cond:3a}  for each $i<k$ there are  $j\leq i,\; j'\geq i+1$ and  $\x_i$ such that $\partial(h, S(\x_i)),$  $\partial(h, N(\x_i))$ are proper boundary maps of height strictly less than the height of $h$ and either
\begin{itemize}
\item
$\partial(h, S(\x_i))(W)=(c_0, \ldots, c_j)$ and $\partial(h, S(\x_i))(E)=(c_0, \ldots, c_{j'})$ or
\item
$\partial(h, N(\x_i))(E)=(c_{j'}, \ldots, c_k)$ and $\partial(h, N(\x_i))(W)=(c_{j}, \ldots, c_k)$
\end{itemize}
 or 
\item\label{cond:3b} there are $\y_0, \y_1$ such that $\partial(h, S(\y_0)),\; \partial(h, N(\y_1))$ are proper boundary maps of height strictly less than the height of $h$ and  $\partial(h, N(\y_0)\cap S(\y_1))$ is also proper and $\partial(h, N(\y_0)\cap S(\y_1))(W)=(c_0)$,  $\partial(h, N(\y_0)\cap S(\y_1))(E)=(c_{k})$,
\end{enumerate}
\end{enumerate}
\end{lemma}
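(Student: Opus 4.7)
The plan is to prove the three-way equivalence by establishing (1) $\Leftrightarrow$ (2) and (2) $\Leftrightarrow$ (3). The key geometric input is a coherence bound on the heights of $\w_i,\e_i$: writing $y(\cdot)$ for the second coordinate, the point $\w_i$ lies due West of its Eastern counterpart at the same height, so by coherence the Eastern point is labelled in a cluster at least $m_i=h(\w_i)$; since everywhere strictly below $\e_i$ on the Eastern edge lies in a cluster $\le c_i$, this forces $y(\e_i)\le y(\w_i)$ for each $i<k$. Combined with the vertical displacement hypothesis that $\e_i$ is not due East of $\w_i$ (hence not at the same height), this strengthens to $y(\e_i)<y(\w_i)$ strictly.

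For (1) $\Rightarrow$ (2), for each $i<k$ I would pick $\x_i$ in the open interior of $\dom(h)$ with $y(\e_i)<y(\x_i)<y(\w_i)$; then $\e_i$ lies in $\dom(h)\cap S(\x_i)$ while $\w_i$ does not, and reading off the W and E traces of this bottom portion gives the first option of (2) with some $j\le i$ and $j'\ge i+1$. Conversely, (2) $\Rightarrow$ (1) is immediate: either option forces $\w_i$ and $\e_i$ to lie on opposite sides of the horizontal line through $\x_i$, so $y(\e_i)\ne y(\w_i)$. For (3) $\Rightarrow$ (2): (3a) is a strengthening of (2); while (3b) forces $y(\e_{k-1})\le y(\y_0)<y(\y_1)\le y(\w_0)$, placing all E-transitions strictly below all W-transitions, so any $\x_i$ whose second coordinate lies in the middle strip witnesses the first option of (2) with $j=0$ and $j'=k$.

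The crux is (2) $\Rightarrow$ (3), which splits on $k$. The case $k=0$ is vacuous. For $k\ge 2$ I establish (3a) by a case analysis on $i$: for $i<k-1$, picking $\x_i$ with $y(\e_i)<y(\x_i)<\min(y(\w_i),y(\e_{i+1}))$ keeps $\e_{k-1}$ strictly above $\x_i$, so the first option applies with some $j'\le k-1$ and rectangle height $j'+1<k+1$; for $i=k-1$, picking $\x_{k-1}$ with $\max(y(\w_0),y(\e_{k-1}))<y(\x_{k-1})<y(\w_{k-1})$ (feasible by the trace order and vertical displacement) places $\w_0$ strictly below $\x_{k-1}$, so the second option applies with some $j\ge 1$ and height $k-j+1<k+1$. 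For $k=1$, both options of (3a) necessarily produce a rectangle of height $2=k+1$, so I establish (3b) instead by taking $\y_0$ at height $y(\e_0)$ and $\y_1$ at height $y(\w_0)$: the middle slice has W trace $(c_0)$ and E trace $(c_1)$ as required, and the upper cluster of $S(\y_0)$, evaluated on the open interior $\{y<y(\e_0)\}$ which lies wholly in $c_0$, collapses to $c_0$, giving height $1<2$; symmetrically for $N(\y_1)$.

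The main obstacle is this $k=1$ case: superficially $S(\y_0)$ still contains every cluster $c_0,\dots,c_k$ along its topmost boundary, so one has to exploit the convention from Section~\ref{sec:rectangles} that the upper and lower clusters of a non-open rectangle model are those of its restriction to the open interior; only this collapses the height enough to meet the strict inequality required by (3b).
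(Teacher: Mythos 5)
Your route through the equivalence (proving $(1)\Leftrightarrow(2)$ and $(2)\Leftrightarrow(3)$ rather than the paper's cycle $(1)\Rightarrow(3)\Rightarrow(2)\Rightarrow(1)$) is legitimate, and the $(1)\Leftrightarrow(2)$ and $(3)\Rightarrow(2)$ parts are essentially sound; you have also correctly identified the key coherence fact $y(\e_i)<y(\w_i)$ and the subtlety that forces $(3b)$ rather than $(3a)$ when $k=1$.  However, there is a genuine gap in $(2)\Rightarrow(3)$ for $k\ge 2$.

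The difficulty is that the dichotomy controlling which of $(3a)$ or $(3b)$ is available is \emph{not} $k=1$ versus $k\ge 2$; it is $y(\w_0)\le y(\e_{k-1})$ versus $y(\w_0)>y(\e_{k-1})$ (which is the case split the paper makes).  Condition $(3a)$ demands that for each $i<k$ \emph{both} $\partial(h,S(\x_i))$ \emph{and} $\partial(h,N(\x_i))$ have height strictly less than that of $h$.  To make the height of $\partial(h,S(\x_i))$ drop you need the Eastern trace of $S(\x_i)$ to stop short of $c_k$, i.e.\ $y(\x_i)\le y(\e_{k-1})$; to make the height of $\partial(h,N(\x_i))$ drop you need its Western trace to start above $c_0$, i.e.\ $y(\x_i)\ge y(\w_0)$.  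These two requirements are jointly satisfiable only when $y(\w_0)\le y(\e_{k-1})$.  For $i<k-1$ you pick $y(\x_i)<\min(y(\w_i),y(\e_{i+1}))$, which takes care of $S(\x_i)$ but says nothing about $N(\x_i)$: whenever $y(\e_{i+1})\le y(\w_0)$ your $\x_i$ lies below $\w_0$, the Western trace of $N(\x_i)$ still begins with $c_0$, the lower cluster of $h\restriction_{N(\x_i)}$ is still $c_0$, and the height of $\partial(h,N(\x_i))$ equals that of $h$.  Symmetrically, your choice for $i=k-1$ is forced above $y(\e_{k-1})$, so $\partial(h,S(\x_{k-1}))$ keeps the full upper cluster $c_k$ and the full height.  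Thus when $k\ge 2$ and $y(\w_0)>y(\e_{k-1})$ your argument proves neither bullet of $(3a)$; in fact $(3a)$ is simply false in that regime, and one must fall back on $(3b)$ with $\y_0=\e_{k-1}$, $\y_1=\w_0$ exactly as you do for $k=1$ (for $k=1$, $y(\w_0)>y(\e_0)$ always holds by vertical displacement, which is why the $k=1$ case always lands there).  Replacing the split on $k$ by the split on $y(\w_0)$ vs.\ $y(\e_{k-1})$ repairs the argument and reduces it to the paper's.
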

\begin{proof} For each $\x\in \overline{\dom(h)}\subseteq\reals^2$ let $y(\x)$ be the $y$ coordinate of $\x$.
For (\ref{cond:1}$\Rightarrow$\ref{cond:3}), 
suppose $h$ is a vertically displaced rectangle model, let $(\w_0, \ldots, \w_{k-1}), \;(\e_0, \ldots, \e_{k-1})$ be the points on the Western and Eastern edges of $\dom(h)$ witnessing $(m_0, \ldots, m_{k-1})$, as illustrated in figure~\ref{fig:vdm}.  First suppose $y(\w_0)\leq y(\e_{k-1})$ (see the left side of figure~\ref{fig:vdm}).    Then $y(\w_0), y(\e_i) \leq y(\w_i), y(\e_{k-1})$ so there is $y$ with $max(y(\w_0), y(\e_i))\leq y_i\leq  min(y(\e_{k-1}), y(\w_i))$.
 Pick any point $\x_i$ with $y(\x_i)=y_i$.  Since $y_i\leq y(\e_{k-1})$, the height of $\partial(h, S(\x_i))$ is strictly less than the height of $h$, and since $y_i\geq y(\w_0)$ the height of $\partial(h, N(\x_i))$ is also strictly less than the height of $h$. By vertical displacement, $y(\e_i)<y(\w_i)$.   If $y_i>y(\e_i)$ (as illustrated) then the first bullet of \eqref{cond:3a} holds else $y_i<y(\w_i)$ and the second bullet holds, either way \eqref{cond:3a} holds.  Otherwise $y(\w_0)>y(\e_{k-1})$ (see the right side of figure~\ref{fig:vdm}),  we let $\y_0=\e_{k-1},\; \y_1=\w_0$ and we get \eqref{cond:3b}.

\begin{figure}  
\begin{center}
\newcommand\Square[1]{+(-#1,-#1) rectangle +(#1,#1)}
\begin{tikzpicture}[scale=.8]
\draw(0,2) \Square{2};

\draw [fill] (2,1) circle [radius=2.0pt];
\draw [fill] (2,2) circle [radius=2.0pt];
\draw [fill] (2,3) circle [radius=2.0pt];
\draw [fill] (-2,1.5) circle [radius=2.0pt];
\draw [fill] (-2,3.5) circle [radius=2.0pt];
\draw [fill] (-2,2.5) circle [radius=2.0pt];
\draw [fill] (-1,2.25) circle [radius=2.0pt];

\draw[dashed] (-2, 2.25) -- (2, 2.25);

\node[left] at (-2, 1.5){$\w_0$};
\node[left] at (-2, 2.5){$\w_i$};
\node[left] at (-2, 3.5){$\w_{k-1}$};

\node[right] at (2,1){$\e_0$};
\node[right] at (2,2){$\e_i$};
\node[right] at (2, 3){$\e_{k-1}$};

\node[above]  at (0,4) {$c_k$};
\node[below]  at (0,0) {$c_0$};
\node[below] at (-1, 2.25){$\x_i$};


\draw[dashed] (-2, 1.5) .. controls (-1,.5) and (1,1.5) ..(2, 1);

\draw[dashed] (-2, 3.5) .. controls (-1,2.5) and (1,3.5) ..(2, 3);

\node at (0, 2.6) {$N(\x_i)$};

\node at (0,1.5){$S(\x_i)$};

\node at (0, .5){$c_0$};
\node at (0, 3.5){$c_k$};

\end{tikzpicture}
\hspace{.5in}
\begin{tikzpicture}[scale=.8]
\draw(0,2) \Square{2};
\node[below]  at (0,0) {$c_0$};
\node[above]  at (0,4) {$c_k$};

\draw [fill] (2,.5) circle [radius=2.0pt];
\draw [fill] (2,1) circle [radius=2.0pt];
\draw [fill] (2,1.5) circle [radius=2.0pt];
\draw [fill] (-2,2.5) circle [radius=2.0pt];
\draw [fill] (-2,3) circle [radius=2.0pt];
\draw [fill] (-2,3.5) circle [radius=2.0pt];

\node[left] at (-2, 3.5){$\w_{k-1}$};
\node[left] at (-2, 3){$\w_i$};
\node[left] at (-2, 2.5){$\w_0$};
\node[right] at (2, .5){$\e_0$};
\node[right] at (2, 1){$\e_i$};
\node[right] at (2, 1.5){$\e_{k-1}$};

\node[right] at(-2, 2.5){$\y_1$};
\node[left] at (2, 1.5){$\y_0$};

\draw[dashed] (-2, 1.5) -- (2, 1.5);
\draw[dashed](-2, 2.5) -- (2, 2.5);

\node at (0, 3) {$N(\y_1)$};
\node at (0, .7) {$S(\y_0)$};
\node at (0, 2){$N(\y_0)\cap S(\y_1)$};


\draw [decorate,decoration={brace,amplitude=5pt},xshift=-4pt,yshift=0pt]
(-2,0) -- (-2,2.3)  node [black,midway,xshift=-0.4cm] {$c_0$};

\draw [decorate,decoration={brace,amplitude=5pt},xshift=--4pt,yshift=0pt]
(2,3.9) -- (2,1.7)  node [black,midway,xshift=--0.4cm] {$c_k$};

\end{tikzpicture}
\caption{Vertically displaced models, where $y(\w_0)\leq y(\e_{k-1})$ or $y(\w_0)>y(\e_{k-1})$, respectively.\label{fig:vdm}}
\end{center}
\end{figure}

 The implication (\ref{cond:3a}) $\Rightarrow$ (\ref{cond:2}) is trivial.  If \eqref{cond:3b} holds then let $j=i,\; j'=k,\;\x_i=\w_i$ to see that the first bullet in \eqref{cond:2} holds.
 
If  condition~(\ref{cond:2})  holds, say the first bullet holds, then $j\leq i\rightarrow y(\w_i)\geq y(\x_i)$ and $j'\geq i+1\rightarrow y(\e_i)<y(\x_i)$, similarly if the second bullet holds we get $y(\w_i)>y(\e_i)$, hence $h$ is vertically displaced.

\end{proof}
We make use of condition~\eqref{cond:2} from the previous lemma in the next definition.  Condition~\ref{cond:3} will be used in section~\ref{sec:complexity}.
\begin{definition} Let $\partial=(\partial\oplus_W\partial)\in X$, say $\partial(W)=(c_0, m_0, \ldots, c_k)$.   We write $Split(\partial, horiz, i, \partial_i^-, \partial_i^+)$ if $\partial=\partial_i^-\oplus_N\partial_i^+$, there are $j\leq i,\; j'\geq i+1$ such that either $\partial_i^-(W)=(c_0,\ldots, c_j),\; \partial_i^-(E)=(c_0,\ldots, c_{j'})$, or $\partial_i^+(E)=(c_{j'}, \ldots, c_k),\; \partial_i^+(W)=(c_j, \ldots, c_k)$, with a similar definition of $Split(\partial, vert, i, \partial_i^-, \partial_i^+)$.

 If $X$ is a set of boundary maps, $\partial\in X,\; \partial\stackrel{W}\rightarrow\partial^*$,  and for each $i<k$ there are $\partial_i^-, \partial_i^+\in X$ such that $Split(\partial, horiz, i,\partial_i^-, \partial_i^+)$ holds, we say that $\partial^*$ is a Western limit over $X$.
We say that $X$ is \emph{closed under  Western limits} if whenever $\partial^*$ is a Western limit  over $X$ we have $\partial^*\in X$.  We say that $X$ is \emph{closed under  limits} if  it is closed under  Western, Eastern, Southern and Northern limits, which are defined similarly.  For any set of proper boundary maps $X$ we say that a proper  boundary map $\partial$ is a shuffle over $X$ if there is $Y\subseteq X$ and a non-empty set $M$ of one point boundary maps such that $\partial$ is a shuffle of $Y\cup M$.

\end{definition}
Our plan now is to define a  recursive set $B$ of  proper boundary maps corresponding exactly to the boundary maps of  rectangle models.  We can then use $B$ to determine the satisfiability of $\phi$. 
We start with  our basic building blocks.  A proper boundary map $\partial$ belongs to $B_0$ iff 

\begin{enumerate}
\item $\partial(-)=\partial(+)$, call this cluster $c$,
\item if $\partial(S)$ is defined then $\partial(S)\leq c$  and $\partial(W)\leq c$ if defined, 
\item  every  future defect of $\partial(S)$  is passed up to $\partial(\r)$ or $c$ and every future defect of $\partial(W)$ is passed up to $\partial(\l)$ or $c$ 
\item dual properties obtained by swapping the pairs $(\F,\P),(-, +), (S, E), (N, W)$ also hold.
\end{enumerate}

Now we define the   set of proper boundary maps $B$, by recursion.  $B$ is the smallest set of proper boundary maps containing $B_0$ and closed under joins, shuffles and limits.  $B$ can be computed using algorithm~\ref{alg}.
\begin{algorithm}
\caption{\label{alg} Algorithm to compute $B$}
\begin{algorithmic}
\State $B=B_0$
\While {new elements can be found}
\State Add any joins of elements of $B$ to $B$
\State  Add any  limits over $B$  (in any of the four directions) to $B$
\State Add any shuffles over  $B$ to $B$
\EndWhile
\end{algorithmic}
\end{algorithm}

\begin{definition}\label{def:simple}
Let $h$ be a rectangle model, let $W$ be the open line segment on the Western side of $\dom(h)$.  We say that $h$ has \emph{a simple Western edge} if  
for all $\x\in W$ there is a neighbourhood $\eta$ of $\x$ and $h[\eta\cap Int(\dom(h))]\subseteq c$, where $c$ is the lower cluster of $h$.  Simple Eastern, Northern and Southern edges are defined similarly.
\end{definition}

\begin{lemma}\label{lem:limit3}
Let $h$ be a rectangle model, let $\x\in \overline{\dom(h)}$ be on the Western edge of $\dom(h)$ and suppose $h$ has a simple Western edge.
Let $\x_0>_1\x_1>_1\ldots \in \dom(h)$ be a sequence of points converging from the East to $\x$.  If $\partial(h, W(\x_i)\cap E(\x_j))$ is constant, for $i<j<\omega$ then
$\partial(h, W(\x_0)\cap E(\x_1))\stackrel{W}\rightarrow \partial(h, W(\x_0)\cap E(\x))$ and has a vertically displaced rectangle model.  Moreover, $\partial(h, W(\x_0)\cap E(\x))$ is a Western limit  over $\set{\partial(h, R):  i<\omega,\;R\subseteq \dom(h)\cap W(\x_0)\cap E(\x_i)}$.
\end{lemma}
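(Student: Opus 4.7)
Write $\partial=\partial(h, W(\x_0)\cap E(\x_1))$ and $\partial^*=\partial(h, W(\x_0)\cap E(\x))$. The plan is to establish each of the three conclusions in turn, using constancy to force an idempotent join and the simple-Western-edge hypothesis to control what can appear on the new western boundary.

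\emph{Idempotent join.} First I would apply the horizontal analogue of lemma~\ref{lem:split} at a point on the line $x=x_1$ in the interior of $\dom(h)\cap W(\x_0)\cap E(\x_2)$ to split
\[
\partial(h, W(\x_0)\cap E(\x_2))=\partial(h, W(\x_1)\cap E(\x_2))\oplus_E\partial(h, W(\x_0)\cap E(\x_1)).
\]
The constancy hypothesis collapses the three boundary maps to $\partial$, yielding $\partial=\partial\oplus_E\partial$ and (symmetrically) $\partial=\partial\oplus_W\partial$. Inspecting the $+_m$ clause in trace concatenation, this forces $\partial(N)$ and $\partial(S)$ (when defined) each to consist of a single cluster, which is precisely the first requirement in the definition of $\partial\stackrel{W}{\rightarrow}\partial^*$.

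\emph{Checking the limit relation.} The components of $\partial^*$ on $\set{E,N,S,\r,\t,+}$ agree with those of $\partial$: the East edge and the upper cluster are untouched, and the constant $N, S$ traces from the previous step simply extend leftward when the western boundary moves from $x=x_1$ to $x=x_\x$. Combining simple Western edge with constancy, every strip $W(\x_0)\cap E(\x_j)$ has lower cluster equal to the lower cluster $c$ of $h$: interior points close to the Western edge lie in $c$, and constancy transfers this lower cluster to $j=1$. Hence $\partial^*(-)=\partial(-)=c$, and any MCS occurring along the Western boundary $\set{x_\x}\times[\cdot]$ is a limit of interior $h$-values in $c$, giving $\partial^*(W)\leq\partial^*(-)$; the defect-passage conditions on $\partial^*(W),\partial^*(\l),\partial^*(\b)$ follow from the no-internal-defects clause applied to $h$.

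\emph{Vertical displacement and Western limit.} I would take $h':=h\restr{W(\x_0)\cap E(\x_1)}$ as the candidate rectangle model realising $\partial$ (any internal $h$-defect is either resolved in the strip or passed East/North to the strip boundary, so $h'$ is a rectangle model). Writing $\partial(W)=(c_0,m_0,\ldots,c_k)$, I claim $k=0$. Suppose $k\geq 1$: then for each $i\geq 1$ and each $j$ constancy provides a witness $(x_j,y_i^{W,j})$ labelled $m_i$ on the Western edge of $W(\x_0)\cap E(\x_j)$. The heights $y_i^{W,j}$ lie in the bounded $y$-range of $\dom(h)$, so a subsequence converges to some $y_i^\infty$ and the corresponding witnesses approach $(x_\x,y_i^\infty)$, which lies on the Western edge of $\dom(h)$. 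Simple Western edge at this limit point then forces $h(x_j,y_i^{W,j})\in c$ for large $j$, contradicting $m_i\in c$ (since $c_i\leq m_i$ would give $c_i\leq c=c_0$, contradicting $c_0<c_i$). Hence $k=0$: the vertical-displacement condition on $h'$ is vacuous, and the Western-limit-over-$X$ condition for $X=\set{\partial(h, R): i<\omega,\ R\subseteq\dom(h)\cap W(\x_0)\cap E(\x_i)}$ collapses to $\partial\in X$, which is immediate on taking $R=W(\x_0)\cap E(\x_1)$.

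\emph{Main obstacle.} The hard part is the subsequence/simple-edge argument forcing $k=0$. It requires compactness of the relevant $y$-range of $\dom(h)$ so that the bounded sequence of witness heights admits a convergent subsequence, and some care to verify that the limit point $(x_\x,y_i^\infty)$ genuinely lies on the Western edge of $\dom(h)$ (so that the simple-edge neighbourhood traps the witnesses). In the setting where the lemma is used (bounded rectangle models), both points are automatic; everything else is a mechanical unpacking of the definitions of $\oplus$, $\stackrel{W}{\rightarrow}$, and Western limit.
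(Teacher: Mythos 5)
Your opening observations are sound (the constancy hypothesis forces $\partial=\partial\oplus_W\partial$, hence $\partial(N),\partial(S)$ are single clusters when defined), but the step where you claim $k=0$ for $\partial(W)=(c_0,m_0,\ldots,c_k)$ is simply false, and the rest of the argument collapses with it. The flaw in your compactness argument is that the witness points $\w_i^j=(x_j,y_i^{W,j})$ need not converge to a point on the \emph{open} Western edge of $\dom(h)$: they can converge to a corner of $\overline{\dom(h)}$, where the simple-Western-edge hypothesis says nothing. Concretely, take $\dom(h)=(0,1)^2$ and set $h(x,y)\in c_0$ for $x+y<1$, $h(x,y)=m_0$ for $x+y=1$, $h(x,y)\in c_1$ for $x+y>1$ (with $c_0<m_0<c_1$ chosen so that $h$ is a rectangle model). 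Then $h$ has a simple Western edge, and for any sequence $\x_j$ converging to a point $\x$ on the Western edge, the trace $\partial(h,W(\x_0)\cap E(\x_j))(W)=(c_0,m_0,c_1)$ is constant with $k=1$. The witness for $m_0$ on the line $x=x_j$ sits at $(x_j,1-x_j)$, and as $j\to\omega$ these converge to the corner $\l=(0,1)$, not to a point of the open edge $W$. So your contradiction never arises, and $k>0$ is perfectly possible under the lemma's hypotheses.

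Because you assert $k=0$, you conclude that vertical displacement is vacuous and that the Western-limit condition reduces to $\partial\in X$; neither holds in general. The paper's proof instead uses the simple-Western-edge hypothesis to show there exists $t<\omega$ for which $h\restr{W(\x_0)\cap E(\x_t)}$ is vertically displaced (the witnesses on the shrinking strips cannot remain aligned horizontally once the Western boundary is close enough to $W$), and then appeals to lemma~\ref{lem:displaced} (the implication from vertical displacement to condition~\eqref{cond:2}) to produce, for each $i<k$, the split boundary maps $\partial_i^\pm$ required by the definition of a Western limit over $X$. This step through lemma~\ref{lem:displaced} is the actual content of the lemma, and your proof bypasses it entirely. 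You would need to drop the $k=0$ claim, argue for vertical displacement of \emph{some} substrip $W(\x_0)\cap E(\x_t)$ (not necessarily $t=1$), and then invoke lemma~\ref{lem:displaced} to exhibit the required $Split$ decompositions. As a side note, your appeal to ``the bounded $y$-range of $\dom(h)$'' is also not justified by the hypotheses, since $\dom(h)$ may be unbounded, but the corner issue is the fatal problem even in the bounded case.
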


\begin{proof}
  Since $\partial(h, W(\x_i)\cap E(\x_j))$ is constant, by considering any $i<j<k$ we see that $\partial(h, W(\x_0)\cap E(\x_1))$ fits to the West of itself, and $\partial(h, W(\x_0)\cap E(\x))$ agrees with $\partial(h, W(\x_0)\cap E(\x_1))$ except perhaps on $W, \b, \l$.   Since $h$ has a simple Western edge ($W$) there must be $t<\omega$ such that $h\restr{W(\x_0)\cap E(\x_t)}$ is vertically displaced, and $\partial(h, W(\x_0)\cap E(\x_t))=\partial(h, W(\x_0)\cap E(\x_1))$.
Let $\partial(h, W(\x_0)\cap E(\x_t))(W)=(c_0, \ldots, c_k)$ say, 
 by lemma~\ref{lem:displaced}\eqref{cond:2} for each $i<k$ there is $\z_i$ such that $Split(\partial(h, W(\x_0)\cap E(\x_t)), horiz, i, \partial(h, W(\x_0)\cap E(\x_t)\cap S(\z_i)), \partial(h, W(\x_0)\cap E(\x_t)\cap N(\z_i)))$ holds, hence $\partial(h, W(\x_0)\cap E(\x))$ is a Western limit  over $\set{\partial(h, W(\x_0)\cap E(\x_t)\cap N(\z_i)),\partial(h, W(\x_0)\cap E(\x_t)\cap S(\z_i)):i<k}$.

\end{proof}

\section{Decidability}
\begin{lemma}\label{lem:main}
Let $\partial$ be a boundary function. 
 $\partial\in B$ iff there is rectangle model $h$ such that $\partial^h=\partial.$
\end{lemma}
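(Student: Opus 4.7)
The plan is to prove the two directions separately, both by induction.

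For the forward direction ($\partial \in B$ implies some rectangle model realises $\partial$), I would induct on the recursive construction of $B$ via Algorithm~\ref{alg}. For the base case $\partial \in B_0$ (where $\partial(-)=\partial(+)=c$ is a single cluster), construct a rectangle model whose interior maps densely into $c$ and whose boundary points and edges are labelled according to $\partial$; the clauses defining $B_0$ guarantee exactly that edge defects propagate correctly into the interior cluster or to the corners, so no internal defects arise. For the inductive step, invoke Lemma~\ref{lem:joins} directly: its three parts cover closure under joins, shuffles (noting that the definition of shuffle already requires at least one one-point boundary map), and limits. The limit case requires a vertically displaced witness, which is obtained by combining Lemma~\ref{lem:displaced}(\ref{cond:2}) with the Split conditions built into the definition of limit over $B$ --- these precisely supply the sub-rectangle boundary maps needed to construct a vertically displaced $h$ from rectangle models already in $B$ (by induction).

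For the backward direction ($\partial^h \in B$ whenever $h$ is a rectangle model), I would do a primary induction on the height of $h$, with a secondary measure (for instance the total number of distinct MCSs appearing in the four boundary traces) to break ties. The height-one base case is immediate: the lower and upper clusters of $h$ coincide, and the $B_0$-conditions on the boundary traces follow from the absence of internal defects. For the inductive step with $h$ of height $k \geq 2$, I would distinguish three cases. Case (A), interior join: if there is $\x$ in the interior of $\dom(h)$ such that both $h$-restrictions to $S(\x)$ and $N(\x)$ (or $W(\x)$ and $E(\x)$) have strictly smaller complexity, use Lemma~\ref{lem:split} to write $\partial^h$ as a join and conclude by induction and closure under joins. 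Case (B), limit: if $h$ has a simple edge in the sense of Definition~\ref{def:simple}, pick a sequence of points converging to that edge along which $\partial^h$ restricted to an interior slab stabilises, then apply Lemma~\ref{lem:limit3} to realise $\partial^h$ as a limit over $B$-members obtained from strictly smaller sub-rectangles, and conclude by closure under limits. Case (C), shuffle: otherwise $h$ must be built from a shuffle of closed rectangle models and one-point MCSs, which (after pruning via Lemma~\ref{lem:shuffles}) are all either of strictly smaller height or point models, all in $B$ by induction, and closure under shuffles finishes.

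The principal obstacle is Case (C). The challenge is extracting an actual shuffle decomposition when neither a join-split nor a limit reduction is available. Here I would use the topological structure lemmas (\ref{lem:cont}, \ref{lem:segment}) to analyse the boundary between the regions labelled by lower and upper clusters inside $\dom(h)$, and Lemma~\ref{lem:point} to produce the uncountable family of one-point and closed sub-rectangles in which shuffle constituents must appear. The delicate point is to show that the absence of a join-split or limit structure forces the sets labelled by the various shorter-height rectangle-model patterns and MCSs to be densely interleaved throughout the interior of $\dom(h)$: for every candidate constituent and every open sub-rectangle of $\dom(h)$, a realising copy must appear. Once this density is established, $\partial^h$ matches the conditions in Definition~\ref{def:combs} for being a shuffle of boundary maps that all lie in $B$ by the inductive hypothesis, and the induction closes.
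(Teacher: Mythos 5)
Your forward direction matches the paper's almost exactly: induct on the recursive generation of $B$, using the $B_0$ clauses to build the base rectangle model densely, then Lemma~\ref{lem:joins} together with Lemma~\ref{lem:displaced} for the inductive step.

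The backward direction is where your proposal has a genuine gap. You frame the argument as a three-way case split --- join available, limit available, or else ``must be a shuffle'' --- and then argue that in the last case the constituents are densely interleaved and all have strictly smaller height, so the induction closes. Neither half of that claim is right. First, the paper does not case-split this way; instead it develops a notion of a \emph{good} rectangle (every proper subrectangle has boundary map in $B$) and proves, as lemmas, that goodness is preserved by joins (claim~\ref{cl:join}), by taking limits of converging sequences of good rectangles (claims~\ref{cl:limit}, \ref{cl:close}, via Ramsey and Lemma~\ref{lem:limit3}), and then --- and this is the crux --- analyses the set $P = (\Gamma\cap\Delta)\cup\set{\l,\r}$ where the closures of the lower- and upper-cluster regions meet. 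It defines an equivalence relation $\approx$ on $P$, shows each equivalence class $S$ gives a good rectangle $R(S)$ (claim~\ref{cl:E}, which itself requires a subsidiary transfinite-style argument showing the relation ``$[\x\wedge\y,\x\vee\y]$ is good'' is an equivalence closed under limits), and only then shows, using Lemma~\ref{lem:point} to guarantee uncountably many singleton classes, that the central rectangle $[\x_0\wedge\x_1,\x_0\vee\x_1]$ is a shuffle of $\set{\partial(h, R(S))}$ (claims~\ref{cl:singleton}, \ref{cl:shuffle}), and that $\partial^h$ is then a \emph{join} of this shuffle piece with four outer pieces.

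Second, and more decisively: the shuffle constituents $\partial(h, R(S))$ do \emph{not} all have strictly smaller height than $h$ (for some $S$ the restricted model passes from $\partial(-)$ all the way up to $\partial(+)$), so ``all lie in $B$ by the inductive hypothesis'' fails. This is exactly why the paper needs the whole goodness-propagation machinery of claims~\ref{cl:int}, \ref{cl:limit2}, \ref{cl:E}: membership of $\partial(h, R(S))$ in $B$ is established by a separate argument using joins and limits over the $\approx$-structure, not by the height induction alone. The ``density'' you invoke is also not what Definition~\ref{def:combs} asks for --- the shuffle conditions are about defect passing between $\partial(-)$, $\partial(+)$ and the constituents' corners, not about realising copies in every open sub-rectangle. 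Your proposal correctly identifies the hard spot but does not supply a viable argument for it; the $\approx$-equivalence decomposition of $\Gamma\cap\Delta$ is the missing idea.
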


\begin{proof}
The left to right implication is proved by induction on the number of iterations of the while loop in algorithm~\ref{alg}.
 For the base case, if $\partial\in B_0$ let $h$ be a proper rectangle model  which maps densely with $\partial(-)$ \/ ($=\partial(+)$) over the interior and maps densely in accordance with $\partial$ along the boundary, where it is defined.  The conditions for membership of $B_0$ ensure that $h$ is coherent and all defects are passed North, South, East or West as appropriate, hence $h$ is a rectangle model and $\partial=\partial^h$.    
 The  induction step holds by lemmas~ \ref{lem:joins} and \ref{lem:displaced}(\ref{cond:2}$\Rightarrow$\ref{cond:1}).
 
\medskip 
 
The right to left implication is proved by induction over the height of $h$.  Let $h$ be any rectangle model  of height zero.     $\partial^h$ satisfies the first condition for $B_0$, since $\partial^h(-)=\partial^h(+)$.    The other three conditions also hold, since $h$ is a rectangle model, so every defect of $h$ must be passed in a suitable direction.  Thus $\partial^h\in B_0\subseteq B$.

 Now let the height of $h$ be positive and assume  the lemma holds for rectangle models of smaller height.   
\begin{definition}\label{def:good}
A  rectangle $R$ is said to be \emph{good} and we may write $G(R)$,  if  for all proper rectangles $R'\subseteq R$ we have $\partial(h, R')\in B$.
\end{definition}    Our induction hypothesis tells us that if the height of $\partial(h, R)$ is strictly less than the height of $h$ then $R$ is good.  We aim to prove the converse implication in the lemma by showing that $\dom(h)$ is good.

\begin{claim}\label{cl:join}
If $X$ is a rectangle, $\x\in Int(X)$ then $(G(X\cap S(\x))\wedge G(X\cap N(\x)))\rightarrow G(X)$ and 
$(G(X\cap W(\x))\wedge G(X\cap E(\x)))\rightarrow G(X)$.
\end{claim}
This claim holds by lemma~\ref{lem:split},  since $B$ is closed under joins.
\\

\begin{figure}
\begin{center}
\begin{tikzpicture}[xscale=.9,yscale=.6]
\draw (0,0) rectangle (10,6);
\draw (0,1) -- (10,1);
\draw (0,5) -- (10,5);

\draw (8,1)--(8,5);
\draw(4,1)--(4,5);
\draw(2,1)--(2,5);
\draw(1,1)--(1,5);
\draw(.5,1)--(.5,5);
\draw(.25,1)--(.25,5);
\draw(.125, 1)--(.125, 5);

\node [left] at (0,1) {$\x$};

\node[below] at (8,1) {$\x_0$};
\node[below] at (4, 1){$\x_1$};
\node[below] at (2, 1){$\x_2$};
\node[below] at (1,1){$\x_3$};
\node[below] at (.5,1){\phantom{a}$\ldots$};

\node at (5,1.5) {$S_0$};
\node at (5,2.833) {$S_1$};
\node at (5,4.17) {$S_2$};

\draw [fill] (0,2.33) circle [radius=2pt];
\draw [fill] (0,3.67) circle [radius=2pt];
\node[left] at (0,2.33) {$m_0$};
\node[left] at (0, 3.67) {$m_1$};
\node[left] at (0, 1.67) {$c_0$};
\node[left] at (0, 3){$c_1$};
\node[left] at (0, 4.33) {$c_2$};

\draw [dashed] (0,2.33) -- (10, 2.33);
\draw [dashed] (0,3.67) -- (10, 3.67);
\end{tikzpicture}
\end{center}

\caption{\label{fig:limit2} For $k<3$,\/ $S_l$ is a  limit of  rectangles $S_l\cap E(\x_i)\; (i<\omega)$}
\end{figure}
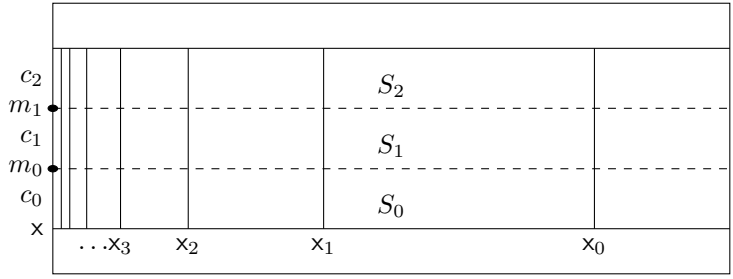

Let $\x\in \overline{\dom(h)}$ and let $\x_0>_1\x_1>_1\ldots\x$ be a sequence converging on $\x$ from the East, as shown in figure~\ref{fig:limit2}.   Let $R$ be a rectangle.
\begin{claim}\label{cl:limit}
 $(\forall i>0\; G(R\cap E(\x_i))) \rightarrow G(R\cap E(\x))$.
 \end{claim}
 To prove this claim, consider an arbitrary proper rectangle $S$ contained in $R\cap E(\x)$.  If $S\subseteq E(\x_i)$ (some $i<\omega$) then its boundary map belongs to $B$.  Assume not, so the Western boundary edge of $S$ is contained in the Western boundary edge of $R\cap E(\x)$.
We may draw (finite) $p$  horizontal lines to divide $S$ into $p+1$ subrectangles $S_0, S_1, \ldots, S_p$ (in figure~\ref{fig:limit2}, $p=2$) where each $S_l$ has a simple Western edge ($l\leq p$).  To show that $\partial(S_l)\in B$  (any $l\leq p$)
 apply Ramsay's theorem to obtain an infinite subsequence $\x'_0>_1\x'_1>_1\ldots$ converging to $\x$ where the boundary map $\partial(h,S_l\cap W(\x'_i)\cap E(\x'_j)$ is constant over all $i<j<\omega$. By lemma~\ref{lem:limit3}, $\partial(h, S_l\cap W(\x'_0)\cap E(\x'_1)) \stackrel{W}{\rightarrow} \partial(h, S_l\cap W(\x_0'))$ and
   $\partial(h, S_l\cap  W(\x'_0))$ is a Western limit over $B$, so it belongs to $B$.  By claim~\ref{cl:join}, $\partial(h, S_l)=\partial(h, S_l\cap W(\x'_0))\oplus_E\partial(h, S_l\cap  E(\x'_0))\in B$, for each $l\leq p$, and therefore $\partial(h, S)=\partial(h, S_0)\oplus_N\ldots\oplus_N\partial(h, S_p)\in B$.  Hence  $R\cap E(\x)$ is good, as claimed.\\

\begin{claim}\label{cl:close}
For any  rectangle $R$, if every proper closed subrectangle of $R$ contained in $Int(R)$ is good then $ R$ is also good.
\end{claim}
If $R$ is not proper then vacuously it is good, so assume it is proper.
For any $a_1<a_2$ and  $b_1<b_2$ such that each line $x=a_1,\; x=a_2,\; y=b_1,\; y=b_2$ passes through the interior of $R$, the closed rectangle bound by these four lines is good, by assumption.  By fixing the other three parameters but varying $a_1$ so that the line $x=a_1$ approaches the Western edge of $R$, we can apply claim~\ref{cl:limit} to deduce that the rectangle $R\cap\set{(x, y):x\leq a_2, \; b_1\leq y\leq b_2}$ is good, provided the lines $x=a_2,\; y=b_1,\; b=b_2$ pass through the interior of $R$.  Now apply claim~\ref{cl:limit} three more times to move the other three bounding lines towards the other three edges and we see that $R$ is also good, as claimed.
\\

Let $I$ be the interior of $h^{-1}(\partial(-))$ and let $\Gamma$ be the closure of the upper boundary of $h^{-1}(\partial(-))$, similarly let $J$ be the interior of $h^{-1}(\partial(+))$ and let $\Delta$ be the closure of the lower boundary, see figure~\ref{fig:gamma}.  Since $\Gamma, \Delta$ are closed, so is $\Gamma\cap\Delta$.  Recall from definition~\ref{def:simple} what a simple Western edge is.
\begin{claim}\label{cl:simple} Suppose $\dom(h)\cap N(\x)\cap E(\x)$ is a proper rectangle. If $\x\in J\cup\Delta$ then $h\restr{N(\x)\cap E(\x)}$ has a simple Western edge.  If $\x<_2\y\in J$ but $\x\not\in (J\cup\Delta)$ then $h\restr{N(\x)\cap E(\x)}$ does not have a simple Western edge.
\end{claim}
There are three dual claims involving simple Southern edges and (for $\x\in I\cup\Gamma$) simple Eastern and Northern edges.
  To prove the claim, if $\x\in J$ or $\x\in\Delta$  then $\x\prec\y\rightarrow h(\y)\in\partial(+)$, hence every $\z>_2\x$ has a neighbourhood $\eta$ such that $h[\eta\cap Int(N(\x)\cap E(\x))]\subseteq \partial(+)$, so $h\restr{N(\x)\cap E(\x)}$ has a simple Western edge.  If $\x\not\in (J\cup\Delta)$ but $\x<_2\y\in J$ then there is $\y'$  where $y'\not\in \overline J$,\/ $\x<_2\y'<_2\y$ and neighbourhoods $\eta'$ of $\y'$,\/ $\eta$ of $\y$ such that  $h[\eta'\cap N(\x)\cap E(\x)]\cap \partial(+)=\emptyset$ but $h[\eta\cap N(\x)\cap E(\x)]\subseteq \partial(+)$, hence the rectangle map does not have a simple Western edge.

\begin{claim}\label{cl:int}
If $\Gamma\cap\Delta\cap Int(\dom(h))=\emptyset$ then $\dom(h)$ is good.   
\end{claim}
For this claim, first  suppose that $\Gamma\cap\Delta=\emptyset$ (see the first part of figure~\ref{fig:epsilon}).  
By lemma~\ref{lem:segment}, $\Gamma$ and $\Delta$ are homeomorphic to closed line segments.  Since $\Gamma\cap\Delta=\emptyset$  by lemma~\ref{lem:apart} there is $\epsilon>0$ such that $(\x\in\Gamma\wedge\y\in\Delta)\rightarrow |\x-\y|>\epsilon$.   Cover $\dom(h)$ with finitely many rectangles, where the length of each diagonal is less than $\epsilon$.    The restriction of $h$ to such a rectangle cannot contain  points in both $I$ and $J$ so the height of $h$ restricted to any one of these rectangles is strictly less than the height of $h$.  By the induction hypothesis, all these rectangles are good and by claim~\ref{cl:join}, $\dom(h)$ is good. 

Now consider the case where  $\Gamma, \Delta$ do not meet within the interior of $\dom(h)$, though they might meet on the boundary of $\dom(h)$.  Every rectangle contained in the interior of $\dom(h)$ is good, by the previous case. Hence by claim~\ref{cl:close}, $\dom(h)$ is also good.
\\

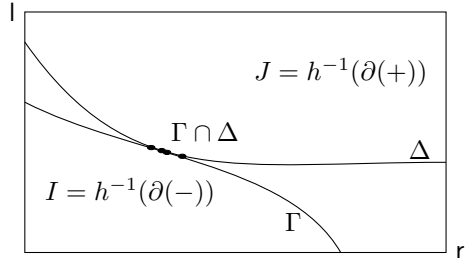
\begin{figure}
\begin{tikzpicture}[xscale=.7,yscale=.4]

\draw (0,0) rectangle (8,8);

\draw  ((6,0) .. controls (5,3)  and (2,3.2)   .. (0,5);

\draw (0,7) .. controls (2,2) and (4,3) .. (8,3);

\node at (2,2) {$I=h^{-1}(\partial(-))$};

\node at (6,6) {$J=h^{-1}(\partial(+))$};

\node at (5.1,1) {$\Gamma$};

\node at (7.5, 3.4) {$\Delta$};

\node [left] at (0,8) {$\l$};
\node[right] at (8,0) {$\r$};

\draw[fill] (2.7,3.33) circle [radius=2pt];
\draw[fill] (2.6,3.39) circle [radius=2pt];
\draw[fill] (2.4,3.49) circle [radius=2pt];
\draw[fill] (2.99,3.2) circle [radius=2pt];
\node[above right] at (2.6, 3.4) {$\Gamma\cap\Delta$};

\end{tikzpicture}
\caption{\label{fig:gamma} Upper and lower clusters with boundaries $\Gamma, \Delta$ shown}
\end{figure}

\medskip
Recall from the beginning the binary relation $\triangleleft$ over $\reals^2$.  Recall also, from lemma~\ref{lem:segment}, that $\Gamma$ is  homeomorphic to a closed line segment,  and observe that the restriction of $\triangleleft$ to $\Gamma$ is a linear order where  $\x\triangleleft\y$ if $\x$ is strictly to the left of (North West of, nearer to $\l$ than)  $\y$.  Let 
\[P=(\Gamma\cap\Delta)\cup\set{\l, \r},\]
so $(P, \triangleleft)$ is a linearly ordered, closed set.  

\begin{claim}\label{cl:limit2}
Let $\x_0\triangleright\x_1\triangleright\ldots \in P$ be a countable sequence converging to $\x\in P$.  If $[\x_i\wedge\x_j, \x_i\vee\x_j]$ is good (for all $0\leq j<i<\omega$) then $R=[\x\wedge\x_0, \x\vee\x_0]$ is also good.
\end{claim}
  The rectangle $W(\x_i)\cap E(\x_j)\cap R$ is good (for $i<j$) since it is the join of $[\x_i\wedge\x_j, \x_i\vee\x_j]$ (assumed good), $W(\x_i)\cap E(\x_j)\cap N(\x_j)$ and $W(\x_i)\cap E(\x_j)\cap S(\x_i)$, and the restriction of $h$ to either of the latter two rectangles has height strictly less than the height of $h$.  The sequence $\x_0\vee \x, \x_1\vee\x, \x_2\vee\x,  \ldots$ converges from the East to $\x$ and $W(\x_i\vee\x)=W(\x_i)$. By claim~\ref{cl:limit} $R=R\cap W(\x)$ is also good, as claimed.
\\

Define a binary relation $\approx$ over $P$ as the smallest equivalence relation such that
\begin{itemize}
\item  if $\y$ is an immediate $\triangleleft$ successor of $\x$  in $P$ then $\x\approx\y$,
\item if $\x\leq \y$ or $\y\leq\x$  (so $[\x\wedge\y,\x\vee\y]$ is not proper) then $\x\approx\y$,
\item  if $S$ is contained in  a $\approx$-equivalence class and if $\z\in\overline S$  then  $\z$ is in the same $\approx$-equivalence class (note that $P$ is closed so we know that $\z\in P$).
\end{itemize}
\begin{claim}\label{cl:E}  Let $S\subseteq P$ be a $\approx$ equivalence class, let $\x_0, \x_1\in P$ be the rightmost left bound  of $S$ (w.r.t. $\triangleleft$) and leftmost right bound respectively.  The rectangle $R(S)=[\x_0\wedge\x_1, \x_0\vee\x_1]$  is good.
\end{claim}
Note first that $\x_0, \x_1\in S$ since they are accumulation points, and $P$ is closed.  
To prove the claim, we know from claim~\ref{cl:int} that if $\y$ is an immediate $\triangleleft$ successor of $\x$ then $[\x\wedge\y, \x\vee\y]$ is good. Trivially, if $\x\leq\y$ then $[\x, \y]$ is not proper, hence it is good.    We show that the set $\varepsilon$ of pairs $(\x, \y)\in S\times S$ for which $[\x\wedge\y,\x\vee\y]$ is good is an equivalence relation.  It is clearly reflexive and symmetric.  If $\x\triangleleft\y\triangleleft\z\in S$ and $[\x\wedge\y,\x\vee\y], [\y\wedge\z, \y\vee\z]$ are good then $[\x\wedge\z,\x\vee\z]$ is the union of $[\x\wedge\y,\x\vee\y], [\y\wedge\z, \y\vee\z], [\x\wedge\z, \y], [\y, \x\vee\z]$ and since the last two rectangles have height less than the height of $h$, they are good,   hence by claim~\ref{cl:join} $[\x\wedge\z, \x\vee\z]$ is also good, so $\varepsilon$ is transitive.
To complete the proof of the claim observe that $\varepsilon$ is closed under limits, by claim~\ref{cl:limit2}.
\\

Next observe that $\triangleleft$ induces a strict dense ordering of $E=P/\approx$.     Recall that each $S\in E$ contains a leftmost point $\l(S)$ and a rightmost point $\r(S)$ and defines a good rectangle $R(S)=[\l(S)\wedge \r(S), \l(S)\vee \r(S)]$.  An \emph{$\omega$-sequence} is a map $\lambda:\omega\rightarrow P$ such that for all $i<\omega$,\/ $\lambda(i+1)$ is an immediate $\triangleleft$ successor of $\lambda(i)$.  Dually, an $\overline{\omega}$-sequence is a function $\lambda:\omega\rightarrow P$ such that $\lambda(i+1)$ is an immediate $\triangleright$ successor of $\lambda(i)$. 
\begin{claim}\label{cl:seq}
For any $S\in E$ and $s\in S$ either (i) $s=\l(S)$, (ii) $s$ is an immediate $\triangleleft$-successor of some $t\in S$, or (iii) for all $\epsilon>0$ there is an $\omega$-sequence or an $\overline{\omega}$-sequence $\lambda$ whose range is to the left of $s$ but within $\epsilon$ of it.
\end{claim}
To prove the claim, note that if there is $\epsilon>0$ and there is no $\omega$-sequence or $\overline\omega$-sequence whose range is to the left of $s$ and within $\epsilon$ of $s$ then for all $u, v\in P$  where $u\triangleleft v\triangleleft s$ are within $\epsilon$ of $s$, we have $u\approx v$ iff there is a finite chain of immediate successors $u=u_0\triangleleft u_1\triangleleft\ldots\triangleleft u_l=v$.  Hence, 
if $s\in S$ is not a $\triangleleft$-successor and there is $\epsilon>0$ such that there is no $\omega$-sequence or $\overline{\omega}$-sequence whose range is to the left of $s$ within $\epsilon$ of it,  if $t\triangleleft s$ then $t\not\approx s$, hence $s=\l(S)$.\\

Let $\Gamma^+$ be the closed line obtained by extending $\Gamma$ by adding a straight line segment from the leftmost point of $\Gamma$ to $\l$ and adding another straight line segment from the rightmost point of $\Gamma$ to $\r$.   
\begin{claim}\label{cl:boxes}
$\Gamma^+$ is a line segment running from $\l$ to $\r$ and it is  the disjoint union of the closed line segments/points $\Gamma^+\cap R(S)$, as $S$ ranges over $E$.
\end{claim}
For this claim, use lemma~\ref{lem:segment} to prove that $\Gamma^+$ is homeomorphic to a closed line segment, and that $\Gamma^+\cap R(S)$ is homeomorphic to a closed line segment that runs from $\l(R(S))$ to $\r(R(S))$, for each $S\in E$.     We note  (by closure of $P$) that for every $\x\in\Gamma^+$ there are nearest neighbours in $P$ wrt $\triangleleft$, one to the left or equal to $\x$, the other to the right or equal to $\x$,  and that $\x$ belongs to the rectangle $R(S)$, where $S$ is the equivalence class of these two (not necessarily distinct) neighbours.  
 It follows  that  $\Gamma^+=\bigcup_{S\in E} (\Gamma^+\cap R(S))$ and the rectangles $R(S)$ are clearly disjoint from each other, since $\approx$ is transitive. This proves claim~\ref{cl:boxes}.

\begin{claim}\label{cl:singleton}  Either $E$ consists of a single $\approx$-equivalence class $S$ and $\overline{\dom(h)}=R(S)$, or $E$ contains uncountably many singleton $\approx$ equivalence classes.
\end{claim}
To prove this claim, note by the previous claim that the union of $\Gamma^+\cap R(S)$ as $S$ ranges over $E$ is homeomorphic to a closed line segment.  If there is more than one equivalence class then by lemma~\ref{lem:point},  uncountably many  segments must be singleton points, hence the equivalence classes of these points must be singletons.

\begin{claim}\label{cl:shuffle} Suppose $|E|>1$.   Let $\x_0=\r(S_0)$, where $S_0$ is the $\approx$-equivalence class of $\l$ and let $\x_1=\l(S_1)$, where $S_1$ is the $\approx$-equivalence class of $\r$.   See figure~\ref{fig:EW}.  Then 
\begin{align*}
\partial  & = \partial(h, W(\x_0))\\
&\oplus_E(\partial(h, [\x_0\wedge\r, \x_1])\oplus_N\partial(h, [\x_0\wedge\x_1,\x_0\vee\x_1])\oplus_N\partial(h, [\x_0, \l\vee\x_1]))\\
&\oplus_E\partial(h, E(\x_1))
\end{align*}
 and $\partial(h, [\x_0\wedge\x_1,\x_0\vee\x_1])$ is a shuffle of  $\set{\partial(h, R(S)): S\in E\setminus\set{S_0, S_1}}$.
\end{claim}
\begin{figure}
\begin{tikzpicture}[yscale=.3]
\draw (0,0) rectangle (9,9);
\draw (3,3) rectangle (6,6);
\draw[dashed] (3,0)--(3,3);

\draw[dashed] (6,0)--(6,3);
\draw[dashed] (3,9)--(3,6);
 \draw[dashed] (6,9)--(6,6);
 
 \node[left] at (0,9) {$\l$};
 \node[right] at (9,0){$\r$};
 \node [left] at (3, 6.2) {$\x_0$};
 \node[right] at (6, 2.8) {$\x_1$};
 
 \node at (1.5, 4.5) {$W(\x_0)$};
  \node at (7.5, 4.5) {$E(\x_1)$};
  
  \node at (4.5, 1.5) {$[\x_0\wedge\r, \x_1]$};
    \node at (4.5, 7.5) {$[\x_0, \l\vee\x_1]$};
 
 \node at (4.5, 4.5) {$[\x_0\wedge\x_1, \x_0\vee\x_1]$};
 
 \draw[fill] (0,9) circle [radius=2pt];
  \draw[fill] (9,0) circle [radius=2pt];
   \draw[fill] (3,6) circle [radius=2pt];
    \draw[fill] (6,3) circle [radius=2pt];
\end{tikzpicture}
\caption{\label{fig:EW}$[\l]_\approx=S_0,\; [\r]_\approx=S_1,\; \x_0=\r(S_0),\;\x_1=\l(S_1)$ and $\partial(h, [\x_0\wedge\x_1, \x_0\vee\x_1])$ is a shuffle of $\set{(\partial(h, R(S)): S\in E\setminus\set{S_0, S_1}}$.}

\end{figure}
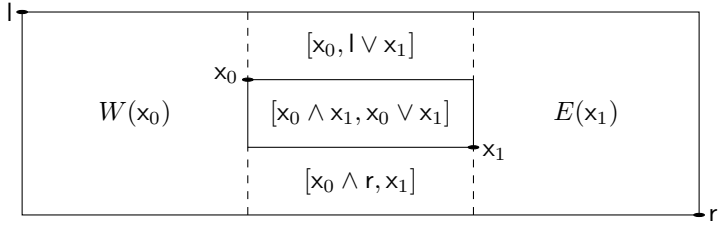
The displayed equation follows from lemma~\ref{lem:split}, we check that $\partial(h, [\x_0\wedge\x_1, \x_0\vee\x_1])$ is a shuffle of boundary maps $\partial(h, R(S))$ for $S\in E\setminus\set{S_0, S_1}$.

First note that since $E$ contains uncountably many unordered singletons (claim \ref{cl:singleton}), there are uncountably many elements of $\Gamma\cap\Delta$ between $\x_0$ and $\x_1$.  The fact that $\Gamma\cap\Delta$  meets the interior of $[\x_0\wedge\x_1, \x_0\vee\x_1]$  ensures there is $\y\in \Gamma\cap\Delta$ with $\y\succ\x_0\wedge\x_1$, hence  $\x_0\wedge\x_1\in I$ and similarly $\x_0\vee\x_1\in J$.   Every point of $[\x_0\wedge\x_1, \x_0\vee\x_1]$ below $\Gamma$ is in $I$, every point above $\Delta$ is in $J$ and every point in $\Gamma^\uparrow\cap\Delta^\downarrow$ is in $R(S)$, for some $S\in E$.
To prove that $]\x_0\wedge\x_1, \x_0\vee\x_1]$ is a shuffle, we check conditions~\ref{sh:1}--\ref{sh:6} of definition~\ref{def:combs}, one at a time.   Condition~\ref{sh:1}  follows from the coherence of $h$.  For  condition~\ref{sh:2}, let $\F\psi$ be a future defect of $\partial^h(-)$ not passed up to $\partial(+)$, 
let $\x_0\wedge\x_1\prec x\in I$ so $\F\psi\in h(\x)$.  Since $h\restr{[\x_0\wedge\x_1, \x_0\vee\x_1]}$ is a rectangle model  there is $\y\geq\x\in [\x_0\wedge\x_1, \x_0\vee\x_1]$ with $\psi\in h(\y)$.  Since $\F\psi$ is a defect of $I$ we cannot have $\y\in I$ and since the defect is not passed up to $\partial(+)$ we cannot have $\y\in J$ either, hence $\y\in R(S)$ for some $S\in E$.  Observe that $\F\psi$ holds at all points below $\y$, hence the defect $\F\psi$ is passed up to the bottom corner of the rectangle $R(S)$ if it is proper, or to the  singleton holding at the point rectangle $R(S)$ otherwise.  Condition~\ref{sh:3} holds because $h$ is a rectangle model: if $\P\psi$ is a defect of $h(\x)$ where $\x$ is either the bottom corner of a rectangle $R(S)$ or the single point of $R(S)$ then there must be $\y< \x$ with $\P\psi\in h(\y)$,  and as we noted above, $\y\in I$, so the defect is passed down to $\partial(-)$. For condition~\ref{sh:4}, since $\x_0$ has no $\triangleleft$ successor and $\x_1$ has no $\triangleright$ successor, and since all points  $\z\in P$ with $\x_0\triangleleft\z\triangleleft \x_1$ are in the interior of $[\x_0\wedge\x_1, \x_0\vee\x_1]$, it follows that the lower edges of this rectangle are covered by $\partial(-)$ hence every future defect of these lower edges is passed up to $\partial(-)$, similarly the upper edges of the rectangle are covered by $\partial(+)$ and their past defects are passed down to $\partial(+)$.

\medskip

From claim~\ref{cl:E} $R(S)$ is good, for all $S\in E$.  By claim~\ref{cl:singleton} either $E$ consists of a single equivalence class $S$ and $R(S)$ is good by claim~\ref{cl:E},  or $|E|>1$.  In the latter case we have $\partial(h,  [\x_0\wedge\x_1\wedge\x_0\vee\x_1])$ is a shuffle  of boundary maps $\partial(h, R(S))$,  where $S$ is an $\approx$-equivalence class.   Since $B$ is closed under shuffles it follows that $\partial(h, [\x_0\wedge\x_1, \x_0\vee\x_1])\in B$.  But $h$ is the join of its restrictions to $ [\x_0\wedge\x_1, \x_0\vee\x_1]$, \/  $[\x_0\wedge\r, \x_1],\; [\x_0, \l\vee\x_1],\; W(\x_0)$ and $E(\x_1)$ (see figure~\ref{fig:EW}).  By claims~\ref{cl:E} and \ref{cl:join} these latter four rectangles are  good, hence $\partial = \partial(h, \dom(h)) \in B$, proving the induction step.  Though not required, the same argument shows that $\partial(h, R)\in B$ for any proper subrectangle $R\subseteq \dom(h)$, so $\dom(h)$ is good. 

\end{proof}

\begin{lemma}\label{lem:4square}
$\phi$ is satisfiable over $(\reals^2, <)$ iff there are boundary maps $\partial_1, \partial_2, \partial_3, \partial_4\in B$ such that $(\partial_1\oplus_E\partial_2)\oplus_N(\partial_3\oplus_E\partial_4)$ is defined (the boundary maps fit together) and open, and $\phi\in\partial_1(\t) \; (=\partial_2(\l)=\partial_3(\r)=\partial_4(\b))$.
\end{lemma}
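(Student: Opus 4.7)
The plan is to reduce both directions to Lemma \ref{lem:main} (identifying $B$ with the boundary maps of rectangle models) together with Lemma \ref{lem:truth} (identifying rectangle models on all of $\reals^2$ with valuations via the truth lemma). The geometric picture underlying everything is the partition of $\reals^2$ at a chosen point $\x$ into its four closed quadrants meeting at $\x$.

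For the forward direction, assume $\phi$ holds at some $\x \in \reals^2$ under a valuation $v$, so by Lemma \ref{lem:truth} the open rectangle model $h_v$ satisfies $\phi \in h_v(\x)$. Set $q_1 = W(\x)\cap S(\x)$, $q_2 = E(\x)\cap S(\x)$, $q_3 = W(\x)\cap N(\x)$, $q_4 = E(\x)\cap N(\x)$---four unbounded proper rectangles each having $\x$ as their unique included corner---and let $\partial_i = \partial(h_v, q_i)$. Lemma \ref{lem:main} gives $\partial_i \in B$, and from Definition \ref{def:bm} one reads off that, e.g., $\dom(\partial_1) = \{N, E, \t, -, +\}$ with $\partial_1(\t) = h_v(\x)$, while the other three have the symmetric shapes. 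The shared edges and corners along the two axes through $\x$ match, so the two $\oplus_E$ joins are well-defined, each leaving only an $N$- or $S$-edge; a final $\oplus_N$ then produces an open boundary map, and $\phi \in \partial_1(\t) = h_v(\x)$ as required.

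For the reverse direction, apply Lemma \ref{lem:main} to each $\partial_i$ to obtain a rectangle model $h_i$ with $\partial^{h_i} = \partial_i$, and use Lemma \ref{lem:order} to order-isomorphically transport each $h_i$ to the appropriate quadrant of $\reals^2$ about a chosen $\x$. Since the boundary maps fit together, the transported models coincide on the shared edges and corners, so two applications of Lemma \ref{lem:joins}(1) in the $E$ direction followed by one in the $N$ direction glue them into a rectangle model $h$ with $\partial^h = (\partial_1 \oplus_E \partial_2)\oplus_N(\partial_3 \oplus_E \partial_4)$. Because this boundary map is open, $\dom(h) = \reals^2$, and Lemma \ref{lem:truth} now yields a valuation $v_h$ satisfying $\phi$ at $\x$.

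The only real bookkeeping is tracking which boundary pieces survive each join: one must verify that after the two $\oplus_E$ joins exactly the $N$- and $S$-edges remain (together with traces that agree along the horizontal line through $\x$, including the value at $\x$ itself), so that the outer $\oplus_N$ is defined and collapses them into the interior to give an open boundary map. Once this is checked, both directions are essentially mechanical applications of Lemmas \ref{lem:main}, \ref{lem:truth}, \ref{lem:order}, and \ref{lem:joins}(1).
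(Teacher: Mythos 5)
Your proof is correct and follows the same approach as the paper's: the forward direction decomposes $h_v$ into the four quadrants about $\x$ (the paper simply cites Lemma~\ref{lem:split} where you spell out the join bookkeeping), and the reverse direction obtains models $h_i$ from Lemma~\ref{lem:main}, transports them to quadrants of $\reals^2$ via Lemma~\ref{lem:order}, glues them via Lemma~\ref{lem:joins}(1), and applies Lemma~\ref{lem:truth}.
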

\begin{proof}  Suppose $\phi$ is satisfiable, say $(\reals^2, <, v), \x\models\phi$  for some valuation $v$, some $\x\in\reals^2$.  By lemma~\ref{lem:truth} the map $h_v:\reals^2\rightarrow \MCS$ defined by $h_v(\x)=\set{\psi\in Cl(\phi): (\reals^2, <, v), \x\models\psi}$ is an open rectangle model.  Let  $\partial_1=\partial(h_v, S(\x)\cap W(x)), \;\partial_2=\partial(h_v, S(\x)\cap E(\x)),\; \partial_3=\partial(h_v, N(\x)\cap W(\x))$ and $\partial_4=\partial(h_v, N(\x)\cap E(\x))$.  The right hand side of the lemma holds, by lemma~\ref{lem:split}.

Conversely, if the right hand side of the lemma holds then by lemma~\ref{lem:main} there are rectangle models $h_1, h_2, h_3, h_4$ such that $\partial_i=\partial^{h_i}$, for $i=1,2,3,4$.  Since the boundary maps fit, we can use lemma~\ref{lem:equiv} and find rectangle models $h_1', h_2', h_3', h_4'$  such that $\partial_i=\partial^{h'_i}$, where $\dom(h_1')=\set{\x\in\reals^2:\x\leq(0,0)}$, etc. and these rectangle models agree with each other on their common boundaries.  Hence $h=h_1'\cup h_2'\cup h_3'\cup h_4'$ is a well-defined  rectangle model and $\phi\in h(0,0)$.   By lemma~\ref{lem:truth}, $(\reals^2, <, v_h), (0,0)\models\phi$.
\end{proof}

\begin{theorem}\label{thm:decidable}
Validity over $(\reals^2, <)$ is decidable.  Validity over $(\reals^2, \leq)$ is also decidable.
\end{theorem}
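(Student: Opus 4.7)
My plan is to derive both parts of the theorem from Lemma~\ref{lem:4square} and Algorithm~\ref{alg}, by verifying that the entire search procedure operates on a finite, effectively computable space.

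First I would observe that once $\phi$ is fixed, the closure $Cl(\phi)$ is finite, and hence so is $\MCSx$, the set of clusters, the set of traces (each is a strictly $<$-increasing alternating sequence of distinct clusters separated by \MCSs, and so bounded in length by $|\MCSx|$), and consequently the set of proper boundary maps together with the set of one-point boundary maps. All the predicates appearing in Definitions~\ref{def:bm} and~\ref{def:combs} --- ``$\partial'$ fits to the North of $\partial$'', ``$\partial \stackrel{W}{\to} \partial^*$'' (which reduces, via the $Split$ predicate, to a check on finitely many candidate decompositions in the already-computed part of $B$), and ``$\partial$ is a shuffle of $\set{\partial_i : i \in I}$'' --- are expressible by checking finitely many conditions on finite traces and finitely many \MCSs and defect-passing conditions, and are therefore decidable. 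In particular $B_0$ is computable, and every line inside the while-loop of Algorithm~\ref{alg} can be executed mechanically.

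Next I would argue that Algorithm~\ref{alg} terminates: each iteration of the while-loop either strictly enlarges the current $B$ (and there are only finitely many candidate boundary maps over $Cl(\phi)$) or halts. Hence $B$ is computed in finite time. To decide satisfiability of $\phi$ over $(\reals^2, <)$, I would then enumerate all 4-tuples $(\partial_1, \partial_2, \partial_3, \partial_4) \in B^4$ and test whether $(\partial_1 \oplus_E \partial_2) \oplus_N (\partial_3 \oplus_E \partial_4)$ is defined and open and whether $\phi \in \partial_1(\t)$, which is precisely the condition of Lemma~\ref{lem:4square}. Since validity of $\phi$ is equivalent to unsatisfiability of $\neg\phi$, this yields decidability of validity over $(\reals^2, <)$.

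Finally, decidability of validity over $(\reals^2, \leq)$ follows from the computable translation $\phi \mapsto \phi'$ described just before Subsection~2.5, where each subformula $\F\psi$ is replaced by $(\psi \vee \F\psi)$ and each subformula $\P\theta$ by $(\theta \vee \P\theta)$; the stated equivalence $(\reals^2, \leq) \models \phi \iff (\reals^2, <) \models \phi'$ reduces the reflexive problem to the irreflexive one. The main obstacle is not any single deep argument but the bookkeeping required to verify that the predicates of Definition~\ref{def:combs} --- in particular shuffles, whose definition quantifies over families $\set{\partial_i : i \in I}$ --- are decidable on the search space; for shuffles this relies crucially on Lemma~\ref{lem:shuffles}, which bounds the witnessing index set by $|\phi|$ and so renders the search finite.
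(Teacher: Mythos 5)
Your proposal is correct and takes essentially the same route as the paper: invoke Lemma~\ref{lem:4square} to reduce satisfiability to a search over matching quadruples of boundary maps, and observe that $B$ is effectively computable because the underlying sets (\MCSss, clusters, traces, boundary maps) are finite, with the $(\reals^2, \leq)$ case handled by the same syntactic translation. The paper dispatches the computability of $B$ with the single phrase ``Since $B$ is recursive,'' whereas you spell out the bookkeeping; one small caveat is that Lemma~\ref{lem:shuffles} is not strictly needed for bare decidability (the ``shuffle over $X$'' definition already ranges over $Y\subseteq X$ with $X$ finite), though it does become essential for the PSPACE bound in the next section.
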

\begin{proof}  We show that satisfiability is decidable (hence validity is too).  Let $\phi$ be a temporal formula.     By lemma~\ref{lem:4square},  satisfiability of $\phi$ is equivalent to the existence of four matching proper boundary maps in $B$ with a common corner labelled by some $m\ni\phi$.  Since $B$ is recursive and we can easily test whether proper boundary maps match, we get our decidability result.

 As we mentioned earlier, to check if $\phi$ is satisfiable over $(\reals^2, \leq)$ it suffices to check if $\phi'$ is satisfiable over $(\reals^2, <)$, where $\phi'$ is obtained from $\phi$ be replacing any subformula $\F\psi$ of $\phi$ by $(\psi\vee\F\psi)$ and any subformula $\P\psi$ by $(\psi\vee\P\psi)$.   
\end{proof}

\section{Complexity}\label{sec:complexity}
Let the length of $\phi$ be $n$.  Its not hard to check that the numbers of \MCSss, traces and boundary maps are bound above by an exponential function of $n$.  Hence the number of runs of  the while loop in algorithm~\ref{alg} is bound by an exponential function, and each iteration runs in exponential time, hence the run-time of the algorithm is bound by an exponential function of $n$.  We tighten that upper complexity bound slightly by devising a non-deterministic algorithm to check if a proper boundary map belongs to $B$ using polynomial space, thereby showing that membership of $B$ and hence also validity over  $(\reals^2, <)$ is in PSPACE. 

\begin{algorithm}
\begin{algorithmic}
\Procedure{$B$}{$\partial$}
\Switch
\Case{0}
 \State Check  $\partial\in B_0$ \Comment{Uses space $\delta=|\partial|$}
\EndCase
\Case{1}
\State Pick $\partial_1, \partial_2$ and $d\in\set{N, S, E, W}$
\State Check  $\partial=\partial_1\oplus_d \partial_2$\Comment{Release $\partial, d$}
\State  Check  $B(\partial_1)$\Comment{Release $\partial_1$}
\State Check  $B(\partial_2)$
\EndCase

\Case{2}
\State Pick $\partial'$ and $d\in\set{N, S, E, W}$
\State Check $\partial'\stackrel{d}\rightarrow\partial$ \Comment{Release $\partial$}
\If {$d\in\set{E, W}$} Check $VD(\partial')$ 
 \Else  $\;$ Check $HD(\partial')$ 
 \EndIf
\EndCase
\Case{3a} 
\State Pick $\partial'$, pick $m\in \MCS$
\State Check  $\partial$ is a shuffle of $\set{\partial', \partial_m}$ \Comment{Release $\partial, m$}
\State Check $\partial'=\partial_{m'}$ (some $m'\in'MCS$) or $B(\partial')$
\EndCase
\Case{3b}
\State Pick $m\in \MCS$, let $S=\set {\partial_m}$
\For {$i<n$}  
\State Pick $\partial_i$ and add to $S$\Comment{Needs $n\cdot\delta$ space, plus}
\State If $\partial_i$ is proper, check that $B(\partial_i)$\Comment{space for one $B(\partial_i)$}
\EndFor
\State Check  $\partial$ is a shuffle of $S$
\EndCase
\EndSwitch
\EndProcedure
\Procedure{$VD$}{$\partial$}\Comment{Check that $\partial$ has vert. displaced rect. model}
\State Check $\partial(W)=\partial(E)$, let $\partial(W)=(c_0, m_0, \ldots, c_p)$
\Switch
\Case{a}
\For{$i=1$ to $p-1$}
\State Pick $\partial_i^-, \partial_i^+$, check $B(\partial_i^-), \; B(\partial_i^+)$
\State  Check $Split(\partial, horiz, i, \partial_i^-, \partial_i^+)$\Comment{Release $\partial_i^-, \partial_i^+$}
\EndFor
\EndCase
\Case{b}
\State Pick $\partial_0, \partial_{1+2}$, check $\partial=\partial_0\oplus_N\partial_{1+2}$\Comment{Release $\partial$}
\State Check $B(\partial_0)$\Comment{Release $\partial_0$}
\State Pick $\partial_1, \partial_2$ check $\partial_{1+2}=\partial_1\oplus_N\partial_2$\Comment{Release $\partial_{1+2}$}
\State Check $B(\partial_2)$\Comment{Release $\partial_2$}   
\State Check $\partial_1(W)=(c_0),\; \partial_1(E)=(c_p)$
\State Check $B(\partial_1)$
\EndCase
\EndSwitch
\EndProcedure
\Procedure{$HD$}{$\partial$}
Similar
\EndProcedure
\end{algorithmic}
\caption{\label{alg2} Algorithm to check that $\partial\in B$}
\end{algorithm}

\begin{lemma}\label{lem:alg2}
The non-deterministic algorithm $B(\partial)$ shown in algorithm~\ref{alg2} correctly determines whether there is a rectangle model $h$ such that $\partial=\partial^h$.  For any rectangle model $h$, there is a run of $B(\partial^h)$ using only polynomial space.
\end{lemma}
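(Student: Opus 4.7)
My plan splits into two parts: correctness of the procedure $B(\partial)$, and then the polynomial-space bound for $\partial=\partial^h$. For correctness, I would argue that algorithm~\ref{alg2} is a direct transcription of the recursive clauses generating $B$, so that acceptance by $B(\partial)$ coincides with $\partial\in B$, at which point lemma~\ref{lem:main} supplies the required rectangle model. The five branches of $B(\partial)$ correspond respectively to: $\partial\in B_0$ (case~0); $\partial$ being a join (case~1); $\partial$ being a limit (case~2), where the predicate $VD$/$HD$ additionally certifies that the boundary map being limited has a vertically (respectively horizontally) displaced rectangle model, as required by lemma~\ref{lem:joins}; a two-element shuffle (case~3a); and a general shuffle (case~3b), with the loop bound $n=|\phi|$ coming from lemma~\ref{lem:shuffles}. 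The subroutines $VD$ and $HD$ transcribe the two alternatives~(\ref{cond:3a}) and~(\ref{cond:3b}) of lemma~\ref{lem:displaced}, packaged as splittings into boundary maps of strictly smaller height.

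For the space bound, I would first argue that every boundary map has polynomial encoding size. Each \MCS or cluster is a subset of $Cl(\phi)$ and so uses $O(n)$ bits. Every strict step $c_i<c_{i+1}$ of a chain of clusters loses at least one $\F$-formula or gains at least one $\P$-formula (by inspecting the definition of $<$ together with the dualities $\G=\neg\F\neg$, $\H=\neg\P\neg$), so chains of clusters have length $O(n)$, traces have length $O(n)$ and size $O(n^2)$, and every boundary map fits in $O(n^2)$ bits. Each stack frame of $B$, $VD$, or $HD$ then uses $O(n^2)$ local space: case~3b sequences its at most $n$ iterations while releasing intermediate storage, so only one $\partial_i$ is held live at a time.

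The principal obstacle is bounding the recursion depth by a polynomial. My plan is to exhibit, for any rectangle model $h$, a nondeterministic run of $B(\partial^h)$ guided by the derivation from the proof of lemma~\ref{lem:main}. That proof proceeds by induction on height, which is $O(n)$; at each height level the rectangle model decomposes either as a shuffle of boundary maps $\partial(h,R(S))$ of strictly smaller height together with $O(1)$ flanking joins (claim~\ref{cl:shuffle}), or as a cover by boundary maps of strictly smaller height, glued by iterated applications of claim~\ref{cl:join} (claims~\ref{cl:int} and~\ref{cl:close}). In the first case, case~3b descends into at most $|\phi|$ strictly smaller height boundary maps; in the second, the finite cover is assembled via a balanced binary tree of joins of depth $O(n)$ whose leaves are boundary maps of strictly smaller height. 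Iterating over the $O(n)$ height levels, the total recursion depth is $O(n^2)$ and the total space usage is polynomial; being nondeterministic, the algorithm needs only this one good run.
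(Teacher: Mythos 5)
Your treatment of the soundness direction (an accepting run yields a rectangle model, with $VD/HD$ certifying displacement so that lemma~\ref{lem:joins} applies) matches the paper. The space bound, however, has a genuine gap in the join case. You propose to assemble the finite cover ``via a balanced binary tree of joins of depth $O(n)$ whose leaves are boundary maps of strictly smaller height.'' But the number of leaves in that cover is not controlled by $n$ at all: in the $\Gamma\cap\Delta=\emptyset$ case the cover arises from lemma~\ref{lem:apart}, which produces rectangles of diameter below an $\epsilon$ that depends on the geometry of the particular model $h$, not on $|\phi|$. So $m$ can be arbitrarily large, the balanced tree has depth $\log_2 m$ rather than $O(n)$, and since every internal node of a join tree must retain its sibling's boundary map while recursing, a balanced tree would incur $\delta\cdot\log_2 m$ extra space with no bound in terms of $n$.

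The paper avoids this by running the joins as a maximally \emph{unbalanced} chain and exploiting the asymmetry built into option~1 of the algorithm: if $\partial$ is split into $\partial_1,\partial_2$, the cost is $\max(\delta+\rho(B(\partial_1)),\ \rho(B(\partial_2)))$, because $\partial_2$ is held (one $\delta$) only while $\partial_1$ is processed, and then both are released before $B(\partial_2)$ runs. Choosing the split so that $\partial_1$ always has height $<k$ (hence $\rho(B(\partial_1))\le S(k-1)$), while $\partial_2$ has the same height but is geometrically smaller, one gets $\rho(B(\partial^h))\le\max(\delta+S(k-1),\rho(B(\partial_2)))$ and, iterating, just $\delta+S(k-1)$ — \emph{independently} of how many peels are needed. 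You would also need to carry out the further cases of the paper's induction ($\Gamma\cap\Delta$ finite but nonempty, the $\approx$-closure argument using Ramsay's theorem to find repeated boundary maps, and the final shuffle decomposition via claim~\ref{cl:shuffle}), each of which needs an explicitly ordered run to keep live storage at $O(\delta)$ per height level; appealing to ``the derivation from the proof of lemma~\ref{lem:main}'' does not by itself produce such a run, since that proof establishes membership in $B$ but does not choose a space-conscious order of recursive calls. Finally, a minor point: with explicit formula encodings $\delta$ is $O(n^3)$ rather than $O(n^2)$, though this does not affect the conclusion.
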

\begin{proof}
We prove, by induction over the number of recursive calls to $B(),\;VD()$ or $HD()$ in a  run of algorithm~\ref{alg2}, that if there is a successful run of $B(\partial)$ then there is a rectangle model $h$ such that $\partial=\partial^h$,  and  if there is a successful run of $VD(\partial)$  then there is a vertically displaced rectangle model $g$ such that $\partial=\partial^g$ (and a similar result for $HD()$).  A successful run of any of these procedures with no recursive procedure calls only happens when $\partial\in B_0\subseteq B$, and every $\partial\in B_0$ has a vertically and horizontally displaced rectangle model.   For the general case, if there is a successful run of $B(\partial)$ then it is either in $B_0$, a join, a limit or a shuffle of boundary maps of rectangle models (according to which option is selected initially) and if it is a $d$-limit of $\partial'$ then $\partial'$ has a vertically/horizontally displaced rectangle model (depending on the direction $d$), so $\partial$ has a rectangle model by lemma~\ref{lem:joins}.  If there is a successful run of $VD(\partial)$ selecting  (a)  then $\partial$ has a vertically displaced rectangle model (use lemma~\ref{lem:displaced}(\ref{cond:2}$\Rightarrow$\ref{cond:1}).  If a successful run of $VD(\partial)$ selects (b) then $\partial_1(W)=(c_0),\;\partial_1(E)=c_k$, so either $k=0$ but then $\partial\in B_0$, or $k>0$, \/ $\partial_1, \partial_2, \partial_3$ each have rectangle model, by lemma~\ref{lem:joins} $\partial$ has a rectangle model and since $k>0$ any rectangle model for $\partial$ is vertically displaced.

The hard part is to prove that if $h$ is a rectangle model then there is an accepting run of $B(\partial^h)$ in algorithm~\ref{alg2} using only polynomial space.  Let $\delta$ be enough space to store any boundary map.  The length of a subformula, the number of formulas in an \MCS and the number of clusters/\!\!\!\MCSs in a trace are each no more than $n$, the length of $\phi$, hence $\delta=O(n^3)$.    Define the function $S:\nats\to\nats$  by $S(k)= \delta\times\max\set{5, n} + \log_2(n)+S(k-1)$ and $S(0)=\delta$, so for $n\geq 5$ we have $S(k)   =  \delta\cdot n\cdot k+\log_2n\cdot k+\delta$.  We claim that if $h$ has height at most $k$ then there is an accepting run of $B(\partial^h)$ using at most $S(k)$ space.  To help with this proof we introduce some extra notation.  In a run of $B(\partial^h)$ we may say that $(1, R_1, R_2, d)$ is chosen if in the first step option 1 is selected, proper boundary maps $\partial(h, R_1), \partial(h, R_2)$ are picked ($R_1, R_2$ are rectangles) and direction $d\in\set{N, S, E, W}$ is chosen.  Similarly, we may write $(0), (2, R, d), (3a, m, R)$ or  $(3b, m, (R_i:i<n))$  to indicate that one of the other options is chosen.  If $B(\partial)$ has a successful run we write $\rho(B(\partial))$ to denote the minimum, over all possible successful runs of $B(\partial)$, of the space used in a successful run.  Similarly, we write $\rho(VD(\partial))$ for the minimum over possible successful runs of $VD(\partial)$ of the space used in a run. 

For $k=0$, if $h$ has height zero then $\partial^h\in B_0$ and there is a successful run of $B(\partial^h)$ where option $(0)$ is chosen, using only the space needed to store $\partial^h$, i.e. $S(0)=\delta$.  Now let $h$ have  height $k>0$.     Let $I, J,  
\Gamma, \Delta, P, \approx$ be as defined just before claim~\ref{cl:int} and illustrated in figure~\ref{fig:gamma}.   Our induction hypothesis is that for any rectangle $R$ if the height of $\partial(h, R)$ is strictly less than $k$ then $\rho(B(\partial(h, R)))\leq S(k-1)$ and if $h\restr{R}$ is also vertically (horizontally) displaced then $\rho(VD(\partial(h, R)))\leq S(k-1)$ (respectively, $\rho(HD(\partial(h, R)))\leq S(k-1)$).

Suppose $h\restr{R}$ is vertically displaced and has height $k$.
By lemma~\ref{lem:displaced}(\ref{cond:1}$\Rightarrow$\ref{cond:3}) there is a successful run of $VD(\partial(h, R))$
 which either selects option (a) first and chooses, for each $i<p$ in turn,  $\partial_i^-, \partial_i^+$ with height strictly less than $k$, so $\rho(VD(\partial(h, R)))$ is at most  $S(k-1)$ to run $B(\partial_i^-)$ plus $2\delta$ to store $\partial_i^+, \partial$ plus $\log_2n$ to store $i$, or it selects (b) initially and chooses $\partial_0, \partial_2$ of height strictly less than $k$, uses at most $S(k-1)+2\delta$ space to check $B(\partial_0)$ while storing $\partial_2, \partial$, then uses at most $S(k-1)+\delta$ space to  run $B(\partial_2)$ while storing $\partial_1$,  and finally runs $B(\partial_1)$.  Thus, 
\begin{equation}\label{eq:VD}
\rho(VD(\partial(h, R)))\leq max(2\delta+\log_2n+S(k-1),\rho(B(\partial(h, S))):S\subseteq R).
\end{equation}
Hence, if we can prove that $\rho(B(\partial(h, S)))\leq S(k)$, for all rectangle models of height at most $k$, it will follow that $\rho(VD(\partial(h, R)))\leq S(k)$ for vertically displaced rectangle models of height at most $k$.

\medskip

Before focussing on the main procedure $B(\partial^h)$, an observation.
 By considering a run of $B(\partial(h, R))$ that selects $(1,  R\cap S(\x), R\cap N(\x))$ initially (for some $\x$ in the interior of $\dom(h)$, so $R\cap S(\x), R\cap N(\x)$ are proper rectangles), runs $B(\partial(h, R\cap S(\x)))$ while storing $\partial(h,  R\cap N(\x))$ and then runs $B(\partial(h,  N(\x)))$,  we see  that
 \begin{equation}\label{eq:rho} \rho(B(\partial(h, R)))\leq max(\delta+\rho(B, \partial(h,  R\cap S(\x))), \rho(B(\partial(h,  R\cap N(\x)))))
 \end{equation}
and similar upper bounds can be obtained corresponding to the other three directions.

Now for the  main procedure $B(\partial^h)$.
 First suppose that $\Gamma\cap\Delta=\emptyset$. 
  We will show that repeated selection of option 1  (and calls to $B(\partial')$ where $\partial'$ has height strictly less than $k$) can  provide an accepting run of $B(\partial^h)$ using only $\delta+S(k-1)$ space.  By lemma~\ref{lem:apart} there is $\epsilon>0$ and the distance between a point in $\Gamma$ and a point in $\Delta$ is always more than $\epsilon$, see the first part of figure~\ref{fig:epsilon}.   Hence, either $\Gamma$ meets the Western edge of $\overline{\dom(h)}$ at a point $\x$ at least $\frac\epsilon{\sqrt{2}}$ South of $\l$, or $\Delta$ meets the Northern edge of the rectangle at least $\frac\epsilon{\sqrt{2}}$ to the East of $\l$, without loss assume the former.  By \eqref{eq:rho}, $\rho(B(\partial^h))\leq max(\delta+\rho(B(\partial(h, N(\x)))), \rho(B(\partial(h, S(\x)))))\leq max(\delta+S(k-1), \rho(B(\partial(h, S(\x))))$, since the height of $\partial(h, N(\x))$ is strictly less than $k$.  Now $h\restr{S(\x)}$ is a restriction of $h$ to a rectangle where one of the dimensions has been reduced by at least $\frac\epsilon{\sqrt2}$.  So we may continue to apply \eqref{eq:rho} until we reduce to a restriction of $h$ to a rectangle map of height strictly less than $k$.  Hence $\rho(B(\partial^h))\leq\delta+S(k-1)$.

\begin{figure}
\begin{tikzpicture}[scale=.6]

\node[left, above] at (0,8) {$\l$};
\draw (0,0) rectangle (8,8);

\draw  ((6,0) .. controls (5,3)  and (2,3.2)   .. (0,5);

\draw (0,7) .. controls (2,6) and (4,3) .. (8,3);



\node at (5.1,1) {$\Gamma$};

\node at (7.5, 3.3) {$\Delta$};

\draw [<->] (3.2,3.2)--(4,4);
\node [above] at (3.5,3.5){$\epsilon$};

\draw[dashed](0,5.0)--(8,5.0);
\node[left] at (0,5) {$\x$};
\draw[<->](-.3,5.2)--(-.3,7.9);
\node[left] at (-.3, 6.5) {$\geq \frac\epsilon{\sqrt 2}$};

\node at (4,7) {$\partial_1=\partial(h, N(\x))$};

\node at (4,2) {$\partial_2=\partial(h, S(\x))$};

\end{tikzpicture}
\\
\begin{tikzpicture}[scale=.6]

\draw (0,0) rectangle (8,8);

\draw (0, 8) .. controls (1,5) and (3,4) ..(4,4);
\draw (4,4) .. controls (5, 4)  and (5.5,3).. (6,2);
\draw (6,2) .. controls (6.5,0).. (7,0);

\draw(0,8) .. controls (1, 8) and (3, 6) .. (4,4);
\draw(4,4) .. controls (5, 2) and (5.5, 2) ..(6,2);
\draw(6,2) .. controls(6.5, 2).. (8,1);

\draw[fill] (0,8) circle [radius=2pt];
\draw[fill] (4,4) circle [radius=2pt];
\draw[fill] (6,2) circle [radius=2pt];

\draw[fill] (0,0) circle [radius=2pt];

\draw[fill] (2,0) circle [radius=2pt];
\draw[fill] (1,0) circle [radius=2pt];
\draw[fill] (0.5,0) circle [radius=2pt];
\draw[fill] (0.25,0) circle [radius=2pt];
\draw[fill] (3,0) circle [radius=2pt];

\draw[dashed](3,0)--(3,8);
\draw[dashed](2,0)--(2,8);
\draw[dashed](1,0)--(1,8);
\draw[dashed](.5,0)--(.5,8);
\draw[dashed](.25,0)--(.25,8);

\node at (5.7,1.6) {$\Gamma$};

\node at (6.6,2.4) {$\Delta$};

\node[below] at (3,0){$\x$};

\node[left] at (0,8) {$\l$};
\node[left] at (0,0) {$\b$};
\node[right] at (8,0){$\r$};

\node[below] at (2,0){$\z_0$};
\node[below] at (1, 0) {$\z_1$};

\node at (6,5) {$E(\x)$};

\node[rotate=90] at (1.5,3) {$W(\z_0)\cap E(\z_1)$};

\node at (1, 6) {$W(\z_0)$};

\end{tikzpicture}
\hspace{.4in}
\begin{tikzpicture}[scale=.6]

\draw (0,0) rectangle (8,8);

\draw (0, 7) .. controls (1,5) and (3,4) ..(4,4);
\draw (4,4) .. controls (5, 4)  and (5.5,3).. (6,2);
\draw (6,2) .. controls (6.5,0).. (7,0);

\draw(1,8) .. controls (1, 8) and (3, 6) .. (4,4);
\draw(4,4) .. controls (5, 2) and (5.5, 2) ..(6,2);
\draw(6,2) .. controls(6.5, 2).. (8,1);

\draw[fill] (0,8) circle [radius=2pt];
\draw[fill] (4,4) circle [radius=2pt];
\draw[fill] (6,2) circle [radius=2pt];

\draw[fill] (2,4) circle [radius=2pt];
\draw[fill] (3,4) circle [radius=2pt];
\draw[fill] (3.5,4) circle [radius=2pt];
\draw[fill] (3.75,4) circle [radius=2pt];

\draw[dashed](4,0)--(4,8);

\draw[dashed](2,0)--(2,8);
\draw[dashed](3, 0)--(3, 8);
\draw[dashed](3.5, 0)--(3.5, 8);
\draw[dashed](3.75, 0)--(3.75, 8);
\draw[dashed](2,4)--(8,4);

\node[left] at (0, 8) {$\l$};

\node[above,right] at (4,4){$\x$};

\node at (1,3) {$W(\x_0)$};

\node[below] at (2,4){$\x_0$};
\node[below] at (3,4){$\x_1$};
\node at (6,6) {$E(\x)\cap N(\x)$};
\end{tikzpicture}

\caption{\label{fig:epsilon}$\Gamma\cap\Delta=\emptyset$, \/ $\l\in \Gamma\cap\Delta$  or $\l\not\in\Gamma\cap\Delta$, finite}
\end{figure}
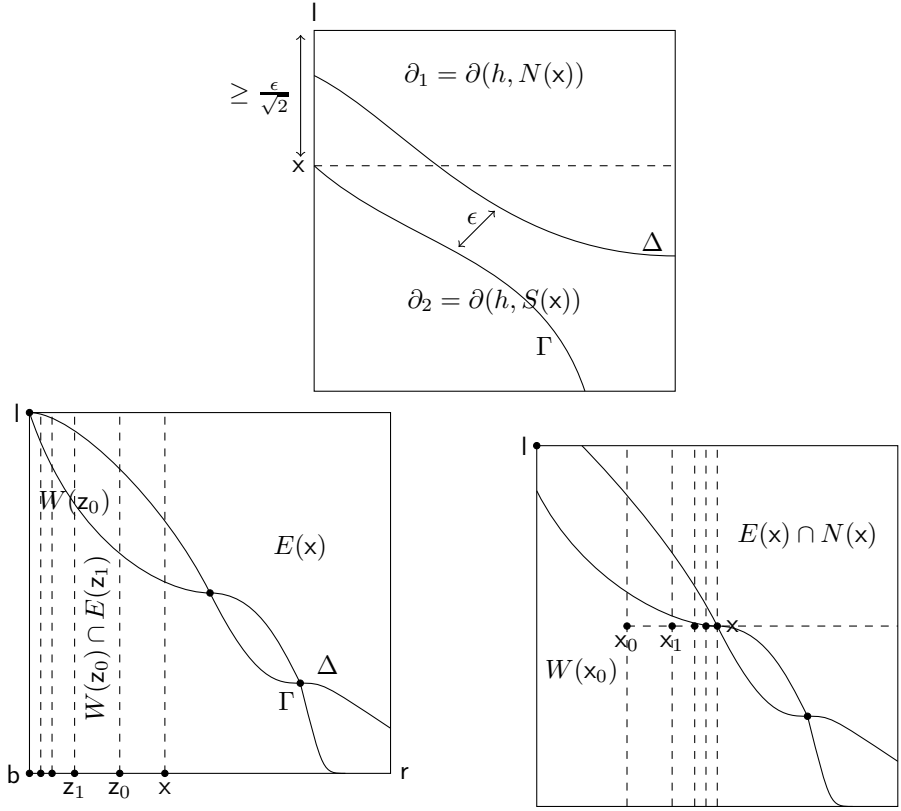

We will show next that if $ \Gamma\cap\Delta$ is finite then there is a successful run of $B(\partial^h)$  selecting options 1 and 2 using at most $3\delta+S(k-1)$ space.   The proof is by induction over the size of $\Gamma\cap\Delta$,   we have already established this when $\Gamma\cap\Delta=\emptyset$ (with room to spare), so now we assume the result for cases where $\Gamma\cap\Delta$ is strictly smaller. For the inductive step, first suppose $\Gamma\cap\Delta$ is finite and $\l\in\Gamma\cap\Delta$, let $\x\in\dom(h)$ be any point East of $\l$ and West of any other member of $\Gamma\cap\Delta$, see the second part of figure~\ref{fig:epsilon} for an illustration.  
If the Western edge of $h$ is not simple, then let $\w$ be the supremum of the set $Z$ of points $\z$ on the Western edge $(\b, \l)$ such that $h\restr{ S(\z)}$ has a simple Western edge.  By claim~\ref{cl:simple},  $\w\in\Gamma,\; \w\in Z$ and  $h\restr{S(\z)}$  has a simple Western edge.  Since the height of $\partial(h, N(\w))$ is less than $k$, by \eqref{eq:rho} we get  $\rho(B(\partial^h))\leq max(\delta+S(k-1), \rho(B(\partial(h, S(\w)))))$ and $\partial(h, S(\w))$ has a simple Western edge.  So we may assume that $h$ has a simple Western edge.
 Use Ramsay's theorem to find an infinite sequence $\z_0>_1\z_1>_1\ldots$ converging to $\b$ from the East, all in $W(\x)$, where $\partial(h, W(\z_j)\cap E(\z_i))$ is constant (for $i<j$).  We  apply lemma~\ref{lem:limit3} to show that  $\partial(h, W(\z_0)\cap E(\z_1))$ has a vertically displaced rectangle model, $\partial(h, W(\z_0)\cap E(\z_1)) \stackrel{W}{\rightarrow}\partial(h, W(\z_0))$ and $\partial(h, W(\z_0))$ is a Western limit.   Since  $\z_0, \z_1\in W(\x) $ we know that $\Gamma\cap\Delta$ does not meet $W(\z_0)\cap E(\z_1)$.  So a successful run of $B(\partial^h)$ goes like this.
\[\begin{array}{lcl}
B(\partial^h)&\rightarrow& \move{W(\z_0)}{E(\z_0)}{E}\\
B(\partial(h, W(\z_0)))&\rightarrow &\movel{W(\z_0)\cap E(\z_1)}{W}\\
VD(\partial(h, W(\z_0)\cap E(\z_1)))&&\\
B(\partial(h, E(\z_0)))&&
\end{array}
\]
In the first line the run of $B(\partial^h)$ selects option 1 and boundary maps $\partial(h, W(\z_0))$, $\partial(h, E(\z_0))$ and direction $E$,  and in the second line option 2  is selected  for the recursive call to $B(\partial(h, W(\z_0)))$, while storing $\partial(h, E(\z_0))$.  By the previous case (where $\Gamma\cap\Delta=\emptyset$) for any subrectangle $R$ of $W(\z_0)\cap E(\z_1)$ we have $\rho(B(\partial(h, R)))\leq\delta+S(k-1)$, so
by \eqref{eq:VD}, the space needed to run $VD(\partial(h, W(\z_0)\cap E(\z_1)))$ in the third line is not more than $2\delta+S(k-1)$, plus $\delta$ to store $\partial(h, E(\z_0))$, hence the space needed is at most $S(k)$.

For the case when $\l\not\in\Gamma\cap\Delta$ (and $\Gamma\cap\Delta$ is finite), let $\x$ be the leftmost point of $\Gamma\cap\Delta$, see the third part of figure~\ref{fig:epsilon}.    Find a sequence  $\x_0, \x_1, \ldots$  converging from the West to $\x$ such that $\partial(h, W(\x_i)\cap E(\x_j)\cap N(\x))$ is constant, for $i<j$.   $E(\x_0)\cap W(\x)\cap N(\x)$ has a simple Eastern edge (by claim~\ref{cl:simple}), so we may apply lemma~\ref{lem:limit3} to show  that   $\partial(h, E(\x_0)\cap W(\x_1)\cap N(\x))$ has a vertically displaced rectangle model and $\partial(h, E(\x_0)\cap W(\x)\cap N(\x))$ is an Eastern limit of it.
A successful run of $B(\partial^h)$ goes like this.
\[\begin{array}{lcl}
B(\partial^h)&\rightarrow& \move{W(\x_0)}{E(\x_0)}{E}\\
B(\partial(h, W(\x_0)))&&\\
B(\partial(h, E(\x_0)))&\rightarrow&\move{E(\x_0)\cap W(\x)}{E(\x)}{E}\\
B(\partial(h, E(\x_0)\cap W(\x)))&\rightarrow& \move{E(\x_0)\cap W(\x)\cap S(\x)}{E(\x_0)\cap W(\x)\cap N(\x)}{N}\\ 
B(\partial(h, E(\x_0)\cap W(\x)\cap S(\x)))&\rightarrow&(0)\\
B(\partial(h, E(\x_0)\cap W(\x)\cap N(\x)))&\rightarrow&\movel{E(\x_0)\cap W(\x_1)\cap N(\x)}{E}\\
VD(\partial(h, E(\x_0)\cap W(\x_1)\cap N(\x)))&&\\
B(\partial(h, E(\x)))&\rightarrow&\move{E(\x)\cap N(\x)}{E(\x)\cap S(\x)}{S}\\
B(\partial(h, E(\x)\cap N(\x)))&\rightarrow&(0)\\
B(\partial(h, E(\x)\cap S(\x))).&&
\end{array}
\]
The second line requires $\delta+S(k-1)$ space (by the case $\Gamma\cap\Delta=\emptyset$) plus  $\delta$ to store $\partial(h, E(\x_0))$.  The fifth  line requires  $\delta$ (since the height of $\partial(h, E(\x_0)\cap W(\x)\cap S(\x))$ is zero) plus $2\delta$ to store $\partial(h, E(\x))$ and $\partial(h, E(\x_0)\cap W(\x)\cap N(\x))$.   Line 7 
 requires $2\delta+S(k-1)$ space (by \eqref{eq:VD} and $\Gamma\cap\Delta=\emptyset$ case) plus $\delta$ to store $\partial(h, E(\x))$.  The final line requires $3\delta+S(k-1)$ space, by the previous case, since $\x\in\Gamma\cap\Delta$ is the left corner of $E(\x)\cap S(\x)$. This proves our result for the case where $\Gamma\cap\Delta$ is finite.

Next we show that if $\x, \y\in\Gamma\cap\Delta$ and $\x\approx\y$ then there is a successful run of $B(\partial(h, E(\x)\cap W(\y))$ using at most $4\delta+S(k-1)$ space.  We do this by letting $\x$ be the leftmost point of some $\approx$-equivalence class and showing that the set of $\y\approx\x$ for which there is a successful run of $B(\partial(h, E(\x)\cap W(\y))$ using that amount of space is closed under successors and closed under limits.   So  suppose $\x\triangleleft\y\in\Gamma\cap\Delta,\;\x\approx\y$ and  that there is a run of $B(\partial(h,E(\x)\cap W(\y)))$ using only $4\delta+S(k-1)$ space.  Let $\y^+\in\Gamma\cap\Delta$ be an immediate successor of $\y$.  Here is a successful run of $B(\partial(h, E(\x)\cap W(\y^+)))$
\[\begin{array}{lcl}
B(\partial(h, E(\x)\cap W(\y^+)))&\rightarrow&\move{E(\y)\cap W(\y^+)}{E(\x)\cap W(\y)}{W}\\
B(\partial(h, E(\y)\cap W(\y^+)))&&\\
B(\partial(h, E(\x)\cap W(\y))).&&
\end{array}
\]
The second line requires only  $3\delta+S(k-1)$ space (by previous case where $\Gamma\cap\Delta$ is finite) plus $\delta$ to store $\partial(h, E(\x)\cap W(\y))$.
The final line requires at most $4\delta+S(k-1)$ space, by assumption.

Now suppose $\x\approx \y\in\Gamma\cap\Delta$,\/ $\y$ is not a successor and for all $\z\in\Gamma\cap\Delta$ with $\x\triangleleft\z\triangleleft\y$   we have $\rho(B(\partial(h,E(\x)\cap W(\z))))\leq4\delta+S(k-1)$, see figure~\ref{fig:y}.   We must show that there is a successful run of $B(\partial(h, E(\x)\cap W(\y)))$ using that amount of space.     
 We start by reducing to the rectangle  $R=E(\x)\cap S(\x)\cap W(\y)\cap N(\y) = [\x\wedge\y, \x\vee\y]$.  
 By two applications of \eqref{eq:rho} we get
\[\rho(B(\partial(h, E(\x)\cap W(\y))))\leq max(\delta +S(k-1),  \rho(B(\partial(h, R))))\]
since the heights of $h\restr{N(\x)\cap E(\x)}, h\restr{W(\y)\cap S(\y)}$ are strictly less than $k$.  It remains to show that  $\rho(B(\partial(h, R)))\leq 4\delta+S(k-1)$.

By claim~\ref{cl:seq}, for each $\epsilon>0$ there is an $\omega$-sequence or an $\overline{\omega}$-sequence whose range is to the left of $\y$ but within $\epsilon$ of it.  
For each $\z\in[\x\wedge\y, \x\vee\y]$ let $\trace^v(h, \z)$ be the extended trace of the closed vertical line segment $[\z\wedge\y, \z\vee\x]$ (see definition~\ref{def:trace}).  Since $\y$ is not a successor, $\tau^v(\y)$ has a simple Eastern edge (by claim~\ref{cl:simple}).  Since the number of extended traces is finite there must be a single extended trace $\trace$ such that for all $\epsilon>0$ there are distinct $\z, \z'\in \Gamma\cap\Delta$ within $\epsilon$ of $\y$ belonging to the range of an $\omega$-sequence or an $\overline{\omega}$-sequence, and where $\trace^v(h, \z)=\trace^v(h, \z')=\trace$.   Since they are in the range of the same $\omega$-sequence or $\overline\omega$-sequence, there are only finitely many points in $\Gamma\cap\Delta\cap [\z\wedge\z',\z\vee\z']$.  Let $\z_0, \z_1, \ldots$ be a sequence converging from the left to $\y$ such that $\trace^v(h, \z_i)=\trace$ (all $i$).  Observe that $\partial(h, E(\z_i)\cap W(\z_j)\cap N(\y)\cap S(\x))$ is constant (for $i<j$) so by lemma~\ref{lem:limit3} $\partial(h, E(\z_0)\cap R)$ is an Eastern limit of it.  So a successful run of $B(\partial(h, R))$ is:
\[\begin{array}{lcl}
B(\partial(h, R)&\rightarrow&\move{R\cap E(\z_0)}{R\cap W(\z_0)}{W}\\
B(\partial(h, R\cap E(\z_0)))&\rightarrow & \movel{R\cap E(\z_0)\cap W(\z_1)}{E}\\
VD(\partial(h, R\cap E(\z_0)\cap W(\z_1)))&&\\
B(\partial(h, R\cap W(\z_0))).&&
\end{array}\]
The third line uses space $3\delta+S(k-1)$ space (by \eqref{eq:VD} and by the case where $\Gamma\cap\Delta$ is finite, since $\partial(h, R\cap E(\z_0)\cap W(\z_1))=\partial(h, R\cap E(\z)\cap W(\z'))$) plus $\delta$ to store $\partial(h, R\cap W(\z_0))$ and the last line uses space $4\delta+S(k-1)$, by our assumption.
We conclude that $\x\approx\y$ implies there is a successful run of $B(\partial(h, E(\x)\cap W(\y)))$ using only $4\delta+S(k-1)$ space.
\begin{figure}

\begin{tikzpicture}[xscale=.5,yscale=.3]

\node[left] at (0,8) {$\x$};
\node[right] at (8,0){$\y$};
\node[left] at (0,0){$\x\wedge\y$};
\node[right] at (8,8){$\x\vee\y$};
\draw (0,0)--(0,8);
\draw (8,0)--(8,8);
\draw[dashed](0,0)--(8,0);
\draw[dashed](0,8)--(8,8);
\node at (4, 9.5) {$N(\x)$};
\node at (4, -1.5){$S(\y)$};

\node at (7,4.5) {$\partial(+)$};

\node[below] at (6,0) {$\z_1\wedge\y$};
\node[above] at (6,8) {$\z_1\vee\x$};

\draw[fill] (6,0) circle [radius=2pt];
\draw[fill] (6,8) circle [radius=2pt];

\node at (1.7,4) {$W(\z_0)$};

\node[right] at(0,7){$R$};

\draw[fill] (8,0) circle [radius=2pt];
\draw[fill] (0,8) circle [radius=2pt];

\draw[fill] (4,4) circle [radius=2pt];
\draw[fill] (6,2) circle [radius=2pt];
\draw[fill] (7,1) circle [radius=2pt];
\draw[fill] (7.5,.5) circle [radius=2pt];
\draw[fill] (7.75,.25) circle [radius=2pt];

\draw[dashed](4,0)--(4,8);
\draw[dashed](6,0)--(6,8);

\node[left] at (4,4){$\z_0$};
\node[left] at (6,2) {$\z_1$};

\end{tikzpicture}
\caption{\label{fig:y} $\y$ is not a successor}
\end{figure}
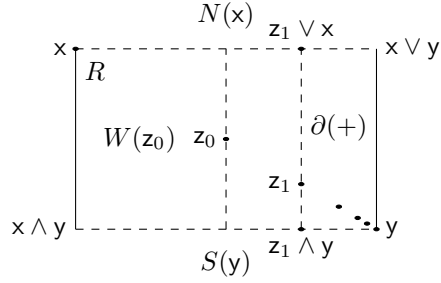

Finally, we prove the main lemma by showing that if $h$ has height $k$ then there is a successful run of $B(\partial^h)$ using at most $S(k)$ space.  If there is only one $\approx$-equivalence class then there is a successful run of $B(\partial^h)$ using only $4\delta+S(k-1)$ space, as we have seen.  Suppose there is more than one $\approx$-equivalence class.   Let $S_0, S_1$ be the $\approx$-equivalence classes of $\l, \r$, respectively, and let $\x_0=\r(S_0),\;\x_1=\l(S_1)$, see figure~\ref{fig:EW}.  A successful run of $B(\partial^h)$ will first reduce to the problem of running $B(\partial(h,   [\x_0\wedge\x_1, \x_0\vee\x_1]))$, using \eqref{eq:rho} four times and removing $W(\x_0),   E(\x_1),\; [\x_0\wedge\r, \x_1]$ and $[\x_0, \l\vee \x_1]$ from the domain of $h$ (see figure~\ref{fig:EW}).  By \eqref{eq:rho} and the foregoing, the space required is at most $\delta \; + \; (4\delta+S(k-1))$, or $\rho(B(\partial(h, [\x_0\wedge\x_1, \x_0\vee\x_1])))$ if greater.
%
 By claim~\ref{cl:shuffle} $\partial(h, [\x_0\wedge\x_1, \x_0\vee\x_1])$ is a shuffle of boundary maps  $\set{\partial(h, R(S)):S\in E\setminus\set{S_0, S_1}}$.    By lemma~\ref{lem:shuffles}, it is either a shuffle of a  subset $\set{\partial(h, R(S)), \partial_{h(\x)}}$ (some  $S, \set{\x}\in E\setminus\set{S_0, S_1}$) or it is a shuffle of $n$ boundary maps $\set{\partial(h, R(S_i)): i<n}$ from this shuffle set, where each $\partial(h, R(S_i))$ has height strictly less than $k$.  In the former case there is a successful run of $B(\partial(h, [\x_0\wedge\x_1, \x_0\vee\x_1]))$ that selects $(3a, h(\x),  R(S))$, checks that $\partial$ is a shuffle of $\set{\partial(h, R(S)), \partial_{h(\x)}}$, using space $3\delta$,  and then runs $B(\partial(h, R(S)))$ (if $R(S)$ is proper) using only $4\delta+S(k-1)$ space (by the previous case $\x, \y\in\Gamma\cap\Delta,\; \x\approx\y$).   In the latter case,
there is a successful run of $B(\partial(h,  [\x_0\wedge\x_1, \x_0\vee\x_1]))$ which selects  $(3b, \set{R(S_i):i<n)}$ initially,  solves each $B(\partial(h, R(S_i)))$, one at a time using space $S(k-1)$, plus $n\delta$ to store $\set{\partial(h, R(S_i)):i<n}$ and $\log_2n$ to store $i$, and then checks that $\partial$ is a shuffle of $\set{\partial(h, R(S_i)):i<n}$ using $(n+1)\delta$ space.  Either way, the space required will not exceed $max\set{5, n}\cdot\delta+S(k-1)=S(k)$.

\end{proof}

\begin{theorem}\label{thm:complexity}
The complexity of determining satisfiability of a temporal formula over two dimensional Minkowski spacetime (either irreflexive or reflexive) is  PSPACE-complete.  
\end{theorem}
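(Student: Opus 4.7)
The plan is to combine the two preceding results to get the upper bound, then establish PSPACE-hardness by a separate reduction. For the upper bound, Lemma~\ref{lem:4square} reduces satisfiability of a temporal formula $\phi$ to the existence of four proper boundary maps $\partial_1,\partial_2,\partial_3,\partial_4 \in B$ that fit together with a shared corner \MCS containing $\phi$. Each boundary map has size polynomial in $|\phi|$ (since each trace has length at most $|\phi|$ and each \MCS has size $O(|\phi|)$), and checking that four given boundary maps fit together and share a corner $m \ni \phi$ is a polynomial time test. So one nondeterministically guesses four such boundary maps and, for each, invokes the nondeterministic algorithm of Lemma~\ref{lem:alg2} which, on inputs of polynomial size, uses only polynomial space. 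This gives an NPSPACE procedure, which by Savitch's theorem is PSPACE. The reflexive case reduces to the irreflexive case by the polynomial translation $\phi \mapsto \phi'$ used at the end of Theorem~\ref{thm:decidable}.

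For the lower bound, I would reduce from the satisfiability problem of the basic future/past temporal logic over a known PSPACE-hard flow (for example $(\reals,<)$ or $(\nats,<)$). Given such an input $\psi$, choose a fresh proposition $p$ and form a formula $\psi_p \wedge p \wedge \chi$, where $\psi_p$ is the relativisation defined in Definition~\ref{def:rel} and $\chi$ is a small formula enforcing that the set $P = v(p)$ is a suitably rich linear substructure of $(\reals^2, <)$. By Lemma~\ref{lem:P}, $\psi_p$ holds at a point of $P$ under valuation $v$ exactly when $\psi$ holds there under the relativised valuation $v_P$. A model of $\psi$ on the original flow transfers to a model over $(\reals^2, <)$ by valuating $p$ to hold precisely along a slower-than-light line (which carries the order type of the reals under $<$), and conversely a $(\reals^2, <)$ model of $\psi_p \wedge p \wedge \chi$ restricts, via Lemma~\ref{lem:P}, to a model of $\psi$ on $P$.

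The main obstacle will be pinning down $\chi$ so that, regardless of how the $p$-points are scattered in $(\reals^2,<)$, the induced substructure is order-isomorphic to (an interval of) the target flow used for the PSPACE-hardness source. The needed facts are linearity of $P$ (no two $p$-points are $\triangleleft$-incomparable), density, and absence of endpoints if reducing from $(\reals,<)$; all are expressible in polynomial size using $\F,\P,\G,\H$ inside a universal envelope $\G(p \rightarrow \cdot) \wedge \H(p \rightarrow \cdot)$. Since the incomparability relation $\triangleleft$ on $\reals^2$ coincides on $P$ with ``has no comparable $p$-successor and no comparable $p$-predecessor between'', a short formula excludes it, and linearity on the relativised structure follows. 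Once $\chi$ is fixed, the reduction is trivially polynomial, and combined with the upper bound this yields the PSPACE-completeness claim in both the reflexive and irreflexive cases.
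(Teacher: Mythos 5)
Your upper bound is correct and is essentially the paper's: Lemma~\ref{lem:4square} plus Lemma~\ref{lem:alg2} plus $\mathrm{NPSPACE}=\mathrm{PSPACE}$, then the $\phi\mapsto\phi'$ translation for the reflexive case. No issues there.

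Your lower bound, however, takes a different and more difficult route, and it has a genuine gap. The paper's argument is a one-liner: by Goldblatt, the purely modal ($\F$-only) logic of $(\reals^2,\leq)$ is ${\bf S4.2}$, which Shapirovsky (building on Ladner) showed to be PSPACE-hard; since the modal fragment trivially reduces to the temporal logic (over $\leq$, then over $<$), PSPACE-hardness is inherited. You instead propose to reduce from a PSPACE-hard linear temporal logic by relativising to a proposition $p$ and forcing $v(p)$ to be a linear suborder. The problem is the step you flag as ``the main obstacle'' and then wave through: you assert that linearity of $P$ (``no two $p$-points are incomparable'') is expressible in polynomial size, but this is not established, and your informal justification is not correct — your proposed characterisation ``$\x\triangleleft\y$ iff there is no comparable $p$-successor/predecessor between'' is satisfied by comparable $p$-points with no $p$-point strictly between them, so it does not distinguish incomparability. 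In fact, the temporal language with a single free proposition evaluated at a single point cannot straightforwardly quantify over pairs of $p$-points to exclude an incomparable pair, which is exactly what linearity requires. Even granting linearity, density, and no endpoints, those conditions characterise $P$ only up to being some dense linear order without endpoints, not necessarily $(\reals,<)$ (it could be $\rats$-like), so the transfer of the PSPACE-hard source problem is not justified. None of this is needed: the paper's route via the known hardness of ${\bf S4.2}$ entirely avoids these difficulties, and you should use it.
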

\begin{proof}
By lemma~\ref{lem:4square}, a temporal formula $\phi$ is satisfiable in $(\reals^2, <)$ iff there are four boundary maps  $\partial_1, \partial_2, \partial_3, \partial_4\in B$ such that $(\partial_1\oplus_E\partial_2)\oplus_N(\partial_3\oplus_E\partial_4)$ is an open boundary map and $\phi\in\partial_1(\t) \; (=\partial_2(\l)=\partial_3(\r)=\partial_4(\b))$.  By lemma~\ref{lem:alg2} this can be checked non-deterministically by algorithm~\ref{alg2} using only space $4\times S(n)=O(n^5)$.  Since NPSPACE=PSPACE it follows that satisfiability (and validity) over $(\reals^2, <)$ is in PSPACE.   Validity over $(\reals^2, \leq)$ reduces to validity over $(\reals^2, <)$ so the former problem is also in PSPACE. On the other hand,
 the purely modal logic of ${(\reals^2, \leq)}$ (with no past operator) is {\bf S4.2} \cite{Gol80}, and this logic is known to be PSPACE hard \cite[theorem~20]{Shap05}, based on \cite{Lad77}.  Trivially, the purely modal logic of $(\reals^2, \leq)$ reduces to the temporal logic over $(\reals^2, \leq)$ which reduces to the temporal logic over $(\reals^2, <)$, so these temporal logics are PSPACE-hard.
\end{proof}

\section{Temporal Logics of Intervals}

These decidability results can be modified to work for certain temporal logics of intervals.
\begin{theorem}
Let $D$ be the open half plane $\set{(x, y)\in\reals^2: y>x}$ and let $\leq, <$ denote the restrictions to $D$  of the relations on $\reals^2$ defined in section~\ref{sec:prelims}.  Validity over $(D, <)$  and validity over $(D, \leq)$  are PSPACE-complete.

\end{theorem}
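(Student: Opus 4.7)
The plan is to adapt the boundary-map machinery of Sections~\ref{sec:BM}--\ref{sec:complexity} to models whose domain lies in the half-plane $D = \{(x,y) : y>x\}$. I extend Definition~\ref{def:rect} of rectangle model to \emph{$D$-rectangle models}, whose domain is of the form $R \cap D$ for a rectangle $R$; these include ordinary rectangles contained in $D$ and ``triangular'' rectangles truncated along the slower-than-light diagonal $\{y=x\}$. Correspondingly, Definition~\ref{def:bm} is extended with a new diagonal edge type (with its own trace, clusters, and corner \MCSss), and the set $B$ of realizable boundary maps is replaced by a set $B_D$ recursively closed under the existing joins, limits and shuffles together with new analogs across the diagonal.

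The analog of Lemma~\ref{lem:main}---that $B_D$ corresponds exactly to realizable $D$-rectangle models---is proved by the same induction on height, checking the new diagonal operations at each step; in particular, a $D$-rectangle model whose upper and lower clusters $\Gamma,\Delta$ do not meet in the interior is cut into smaller $D$-rectangles exactly as before, and shuffles are built in the same fashion with possible triangular ``tiles''. The analog of Lemma~\ref{lem:4square} then characterizes satisfiability of $\phi$ over $(D, <)$ as the existence of suitable boundary maps in $B_D$ glued around an interior corner of $D$. Algorithm~\ref{alg2} is adapted to check membership in $B_D$, and the PSPACE analysis of Lemma~\ref{lem:alg2} carries over because the number of additional boundary-map types is still bounded exponentially in $|\phi|$.

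For PSPACE-hardness I follow the strategy used in the proof of Theorem~\ref{thm:complexity}. Since $(D, \leq)$ is a reflexive, transitive, and confluent poset (being closed under meets and joins as a sublattice of $\reals^2$) and satisfies the structural richness conditions used by Goldblatt, its purely modal logic is $\textbf{S4.2}$, which is PSPACE-hard by Ladner's result. The modal logic of $(D, \leq)$ reduces trivially to the temporal logic over $(D, \leq)$, which in turn reduces to the temporal logic over $(D, <)$ via the standard translation (replacing each $\F\psi$ by $\psi \vee \F\psi$ and each $\P\psi$ by $\psi \vee \P\psi$) used at the end of Section~\ref{sec:prelims}; the upper bound for $(D, \leq)$ reduces to the upper bound for $(D, <)$ by the same translation.

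The main obstacle is the careful re-adaptation of the closure lemmas (Lemmas~\ref{lem:joins}, \ref{lem:displaced}, and \ref{lem:shuffles}) to the diagonal boundary. Triangular rectangles, joins across the diagonal, and shuffles whose tile set includes pieces touching the diagonal all require verifying that the realizability correspondence is preserved, and in particular that defects can be passed along a slower-than-light ordered edge rather than along the horizontal or vertical spatial edges of the original theory. The combinatorial outline is unchanged, but some extra case analysis is needed to accommodate these new configurations, together with analogs of the topological lemmas (Lemmas~\ref{lem:segment} and \ref{lem:cont}) verifying that the upper and lower cluster regions in a $D$-rectangle model still have boundaries homeomorphic to closed line segments.
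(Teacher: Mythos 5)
Your general strategy of extending the boundary‑map machinery to domains that meet the diagonal is on the right track, and your hardness argument (via the modal logic $\mathbf{S4.2}$ of the sublattice $D$, then reduction from $(D,\leq)$ to $(D,<)$) matches the paper. But the paper's upper‑bound proof is structured very differently, and the difference matters: it exploits a simplifying fact that you mention in passing but do not use, namely that the boundary line $y=x$ is a \emph{slower‑than‑light ordered} line. Because of this, a triangular model's labelling along the hypotenuse is described by a \emph{finite} trace, so a triangle of positive height can always be decomposed by a single axis‑parallel cut through a point $\x$ on the diagonal into a strictly smaller triangle $h\restriction_{S(\x)}$, a rectangle $h\restriction_{N(\x)\cap W(\x)}$ (already handled by the existing $B$), and a flat triangle $h\restriction_{E(\x)}$ of height zero. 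Consequently the paper needs only one new operation — a \emph{triangular join} $\triang(\triang_1,\partial,\triang_0)$ — and builds sets $T_0\subseteq T_1\subseteq\cdots$ by a linear recursion in height, with no triangular analogues of limits or shuffles at all. This is crucially simpler than what you propose.

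Several specific features of your plan are either unnecessary or likely to cause trouble. First, the hypotenuse lies \emph{outside} the open half‑plane $D$, so there is no closed diagonal edge of the domain, and hence no diagonal trace of clusters/\MCSs to record: the paper's triangular boundary maps have domain contained in $\set{N,W,\l,-,+}$, with no diagonal component. Your ``new diagonal edge type (with its own trace, clusters, and corner \MCSss)'' would never be populated. Second, you allow domains $R\cap D$ for arbitrary rectangles $R$, which produces pentagons and other shapes; the paper avoids these entirely by only ever considering triangles with vertices $(a,a),(b,b),(a,b)$, which is closed under the cuts the recursion performs. Third, and most substantively, your proposal to carry shuffles and limits across to ``$D$‑rectangle models'' with ``triangular tiles'' is where a gap is most likely to appear: shuffles arise from Cantor‑set‑like configurations of $\Gamma\cap\Delta$ along a \emph{spatial} diagonal between two corners $\l$ and $\r$, but a triangle has only one spatial corner, and (by the analogue of lemma~\ref{lem:cont}) the ordered hypotenuse meets $\Gamma$, $\Delta$ and hence $\Gamma\cap\Delta$ in at most one point each. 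So no densely accumulating family of tiles can abut the hypotenuse, and the whole shuffle/limit machinery is not actually needed in the triangular case — the paper's single cut already separates all that complexity into the rectangular middle piece. If you do want to carry your plan through as written you would have to re‑prove closure and PSPACE bounds for a much larger class of boundary maps and operations than necessary, and you would need to argue separately that the hypothesised ``diagonal shuffles'' are vacuous; it is cleaner and closer to the paper's argument to use the ordered hypotenuse from the start.
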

\begin{proofsk}
Its not hard to show that a purely modal formula is satisfiable in $(\reals^2, \leq)$ if and only if it is satisfiable in $(D, \leq)$, so we get PSPACE-hardness by the results cited above.

To show that the validity problem for the temporal logic over $(D, <)$ is in PSPACE, the proof is similar to the proofs of theorem~\ref{thm:decidable} and  lemma~\ref{lem:alg2}, but as well as rectangle models we also consider triangle models. A \emph{triangle} has vertices $(a,a), (b, b), (a, b)$ (some $a<b$) and it is either $\set{(x, y): y>x,\; a<x, y<b}$ (open), $\set{(x, y):y>x,\; a<x<b, \; a<y\leq b}$ (includes $N$), $\set{(x, y):y>x,\; a\leq x< b,\; a<y <b}$ (includes $W$), or $\set{(x, y):x<y,\; a\leq x<b,\; a<y\leq b}$ (includes $N, W, \l$).  A triangular map is a coherent function from a triangle to \MCSs  where all defects are passed to boundary points, as in definition~\ref{def:rect}. A \emph{triangular boundary map} (see figure~\ref{fig:triangle}) is a partial coherent map $\triang$ from $\set{N, W}\cup\set{\l}\cup\set{-, +}$ to traces (for the first two),  \MCSs (for the next one) and clusters (for the last two), whose domain is either $\set{-, +}, \set{-, +, N}, \set{-,+, W}$ or $\set{-, +, N, W, \l}$,  where every future defect of $\triang(+)$ is passed up to the final cluster of $\triang(N)$ and every past defect of $\triang(-)$ must be passed down to the initial cluster of $\partial(W)$,  cf. definition~\ref{def:bm}.  The height of a triangular map is the length of the longest chain of clusters from the lower to the upper cluster of the triangular map.  If $h$ is a triangular map then $\triang^h$ is the triangular boundary map induced by $h$.
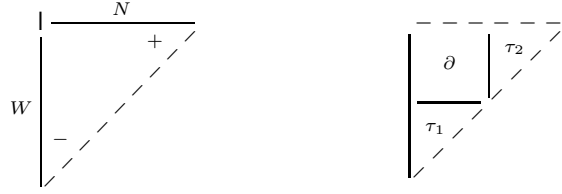
\begin{figure}
\[
\xymatrix{
\l\ar@{-}[rr]^N&&{}\ar@{--}[lldd]\\
&&\\
{}\ar@{-}[uu]^W\ar@{}[rruu]^<(.8){+}\ar@{}[rruu]^<(.2){-}&&
} \hspace{1in}
\xymatrix{
\ar@{--}[rr]\ar@{}[rd]|\partial&\ar@{}[rd]|<(.25){\triang_2}&\\
\ar@{-}[r]\ar@{}[rd]|<(.25){\triang_1}&\ar@{-}[u]&\\
\ar@{-}[uu]\ar@{--}[rruu]&&
}
\]
\caption{\label{fig:triangle} (a) A triangular boundary map, (b) \label{fig:extra joins} the join of $\triang_1, \partial$ and $\triang_2$}
\end{figure}
Then we define a set $T_0$ of triangular boundary maps $\triang$ satisfying $\triang(-)=\triang(+)$, where  all past defects of $\triang(N)$ are passed down to either $\triang(\l)$ or $\triang(+)$, and dual conditions.  If $\triang_1, \triang_2$ are triangular boundary maps, $\partial$ is a rectangular boundary map, $\partial(S), \partial(E)$ are defined,  $\triang_1(N)=\partial(S)$,\/  $\partial(E)=\triang_2(W)$, \/ $\triang_1(\l)=\partial(\b),\;\triang_2(\l)=\partial(\t)$   then we may define the triangular join of $\triang=\triang(\triang_1, \partial, \triang_2)$ to be the triangular boundary map illustrated in figure~\ref{fig:extra joins}(b), so $\triang(W)=\triang_1(W)+_{\triang_1(\l)}\partial(W)$, $\triang(+)=\triang_2(+)$, etc.

Suppose we have defined  a set $T_k$ of triangular maps of height at most $k$ and established that $\triang\in T_k$ iff there is a triangular map $h$ of height at most $k$ such that $\triang=\triang^h$ (easy to check for $k=0$).  Let $T_{k+1}$ be the union of $T_k$ and the set of triangular joins $\triang(\triang_1, \partial, \triang_0)$, where $\triang_1\in T_k$,\/ $\partial\in B$ and $\triang_0\in T_0$.       If $\triang\in T_{k+1}$ it is not hard to construct a triangular map $h$ such that $\triang=\triang^h$, by joining together triangular maps for $\triang_0, \triang_1$ and a rectangular map for $\partial$.  Conversely, let $h$ be a triangular map of height $k+1$.    Let $\x$ be the infimum of the set of points on the line $y=x$ such that $h\restr{E(\x)}$ has the same lower and upper cluster.  Then the height of $h\restr{E(\x)}$ is zero and the height of $h\restr{S(\x)}$ is no more than $k$ (see figure~\ref{fig:triangle}(b)).  Also, $h\restr{N(\x)\cap W(\x)}$ is a rectangle model.  Inductively $\triang_0=\triang^{h\restr{E(\x)}}\in T_0,\; \triang_1=\triang^{h\restr{S(\x)}}\in T_k$ and by lemma~\ref{lem:main}, $\partial = \partial(h, N(\x)\cap W(\x))\in B$,   Hence $\triang^h$ is the triangular join $\triang(\triang_1, \partial, \triang_0)$ and belongs to $T_{k+1}$.

A non-deterministic algorithm to check if $\triang$ belongs to $T_{k+1}$ guesses $\triang_0, \partial$ and $\triang_1$, checks that they join to make $\triang$,  checks $\triang_0\in T_0$, checks $\partial\in B$ and checks $\triang_1\in T_k$ and this can be done using only polynomial space.

To check if $\phi$ is satisfiable in $(D, <)$ we check if there is an open triangular boundary map in $T_n$ that may be decomposed as $\triang(\triang_1, \partial, \triang_0)$ and $\phi$ is satisfiable in a rectangle map whose boundary map is $\partial$ (use lemma~\ref{lem:4square}).  All this can  be done with polynomial space.
\end{proofsk}

Let $\mathcal I=\set{(x, y): x<y\in\reals}$ be the set of intervals with real endpoints and consider the following two binary relations over $\mathcal I$:
\begin{align*}
L&= \set{\mbox{before, meets, overlaps, ended\_by, starts}}\\
LE&= L\cup\set{\mbox{equals}}
\end{align*}
 i.e. for intervals $i, j$,\/ $(i, j)\in LE$ iff the start of $j$ is not before the start of $i$ and the end of $j$ is not before the end of $i$, and $L$ is the irreflexive restriction of $L$.
\begin{corollary}
The temporal logics over the frames $(\mathcal I, L)$ and $(\mathcal I, LE)$  are PSPACE-complete.
\end{corollary}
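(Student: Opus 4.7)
\begin{proofsk}
The plan is to reduce the corollary directly to the preceding theorem by exhibiting an isomorphism between the Kripke frames $(\mathcal I, LE)$ and $(D, \leq)$, and similarly $(\mathcal I, L) \cong (D, <)$, where $D = \{(x, y) \in \reals^2 : y > x\}$ is the open half-plane considered in that theorem. Since isomorphic frames validate exactly the same temporal formulas, PSPACE-completeness transfers along the isomorphism, and no further work with clusters, rectangles or boundary maps is needed.

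The map I would use is the obvious bijection $\iota : \mathcal I \to D$ defined by $\iota(x, y) = (x, y)$; it is surjective onto $D$ precisely because $\mathcal I$ consists of pairs with $x < y$. For the reflexive case, writing $i = (x_1, y_1)$ and $j = (x_2, y_2)$, the definition of $LE$ given in the statement of the corollary unpacks to ``$x_1 \leq x_2$ and $y_1 \leq y_2$'', which is exactly the product order $\iota(i) \leq \iota(j)$ inherited from $(\reals^2, \leq)$. For $L$, a direct case check of the five Allen relations shows that each forces $x_1 \leq x_2$ and $y_1 \leq y_2$ with at least one strict inequality, and conversely any ordered pair of distinct intervals satisfying $\iota(i) < \iota(j)$ in $D$ falls under exactly one of before, meets, overlaps, ended-by or starts; hence $L$ pulls back to the strict order $<$ on $D$.

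With these isomorphisms in place, PSPACE membership for both $(\mathcal I, L)$ and $(\mathcal I, LE)$ is immediate from the preceding theorem, and the PSPACE-hardness established there (inherited ultimately from the hardness of the modal logic {\bf S4.2}) transfers through $\iota$ in the opposite direction: a modal or temporal formula $\phi$ is valid over $(D, \leq)$ iff it is valid over $(\mathcal I, LE)$, and similarly for the irreflexive case. There is no real obstacle; the only slightly fiddly step is the case analysis matching each Allen relation with the strict product order, and that is routine.
\end{proofsk}
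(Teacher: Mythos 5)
Your proposal is correct and matches the paper's own proof: the paper simply observes that $(\mathcal I, L)$ is identical to $(D, <)$ and $(\mathcal I, LE)$ is identical to $(D, \leq)$, which is exactly your identity-map isomorphism, just stated without spelling out the Allen-relation case analysis. You have filled in the routine verification the paper leaves implicit.
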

\begin{proof}
The frame $(D, <)$ is identical to the frame $(\mathcal I, L)$.  The frame $(D, \leq)$ is identical to the frame $(\mathcal I, LE)$.
\end{proof}

\begin{definition}\label{def:delta}
Let $\c P = (P, <)$ be a  finite  set with a transitive relation,  for each $p\in P$ consider the point as a propositional letter.  Let $\Delta(\c P)$ be the formula
\[
\begin{array}{l lll}
\G\H&[ & \bigvee P &\wedge \\
&&\bigwedge_{p\neq q\in P}\neg(p\wedge q)&\wedge\\
&&\bigwedge_{p\neq q,\;p<q\in P}(p\rightarrow\F q)\wedge(q\rightarrow\P p)&\wedge\\
&&\bigwedge_{p\neq q,\;p\not< q\in P}(p\rightarrow\G\neg q)\wedge(q\rightarrow \H\neg p)&\\
&]
\end{array}
\]
\end{definition}
Note that $\Delta(\c P)$ does not contain either $p\rightarrow \F p$ or $p\rightarrow \G\neg p$, for $p\in\c P$, making the formula indifferent to reflexivity.
We also define a formula $\Delta^+(\c P)$ in a similar way, but here if $p$ is an irreflexive point we include an additional conjunct $p\rightarrow \G\neg p$ and if $p$ is a reflexive point we include a conjunct $p\rightarrow (\F p\wedge \P p)$ in the scope of $\G\H$.  $\Delta^+(\c P)$ has a strong opinion about reflexivity.

Since these formulas are in the scope of $\G\H$,  which acts as a universal modality for directed frames, the point of evaluation of $\Delta(\c P)$ or $\Delta^+(\c P)$ makes no difference, so we may write $\c F\models \Delta(\c P)$ instead of $\c F, \x\models\Delta(\c P)$, where $\x$ is a point in the directed frame $\c F$. 
Clearly, $\Delta(\c P)$ and $\Delta^+(\c P)$ are true in the frame $\c P$ under the valuation which makes the proposition $p$ true at the node $p$ and nowhere else (for each $p\in \c P$).  Furthermore, if $\c P'$ is obtained from $\c P$ by adding or subtracting reflexive edges then $\c P'\models\Delta(\c P)$.

\medskip

We now consider another half-plane $\set{(x, y)\in\reals^2:x+y>0}$ and a bounded triangular frame $(A, <)$ where the base set is
\[A= \set{(x, y): 0<x, y<1,\; x+y>1}.\]
The unbounded real half plane $\set{(x, y)\in\reals^2:x+y>0}$ and this triangular frame are equivalent (essentially, by lemma~\ref{lem:equiv}).  We consider the validity problem for 
 $(A, <)$.  The trick we used for the half-plane $D$ won't work this time because the boundary line $x+y=1$ is not time-like and cannot in general be described by a finite ordered sequence of clusters and \MCSss.  Also note that the past directed confluence axiom 
\[ \P\H p\rightarrow \H\P p\]
can fail in $(A, <)$ whereas it is valid over  $(\reals^2, <)$ and $(D, <)$.  However, we provide a $p$-time reduction of the validities of $(A, <)$ to those of the closed unit square $\Sq= ([0,1]\times[0,1], <)$.    The validity problem over $\Sq$ can be checked with polynomial space, essentially by using algorithm~\ref{alg2}, as we argue below.

Let $\b, \t, \l, \r, N, S, E, W, \delta_1, \delta_2, \alpha, \beta$ be propositions that do not occur in $\phi$ (where the first four are intended to represent the four corners of the square, the next four are intended for the four sides of the square,  $\delta_1, \delta_2$ are intended for the diagonal line from $\l$ to $\r$, \/ $\alpha$ and $\beta$ are intended for the interior of the square above and below the diagonal, respectively).  Let
$\c P$ be the transitive relation illustrated on the left in figure~\ref{fig:square}.  Let $v_0:\props\rightarrow\wp([[0,1]\times[0,1])$ be the valuation over $\Sq$ illustrated in figure~\ref{fig:square} on the right, defined by 
\begin{align*}
v_0(\b) &=\set{(0,0)}& v_0(\t)&=\set{(1,1)}\\
v_0(\l)&=\set{(0,1)}&v_0(\r)&=\set{(1,0)}\\
v_0(S)&=\set{(x, 0):0<x<1}&v_0(W)&=\set{(0, y):0<y<1}\\
v_0(N)&=\set{(x, 1):0<x<1}&v_0(E)&=\set{(1, y):0<y<1}\\
v_0(\alpha)&=\set{(x, y):x+y>1,\; x, y<1}&v_0(\beta)&=\set{(x, y):x+y<1,\; x, y>0}\\
v_0(\delta_1)&= d_1&v_0(\delta_2)&=d_2
\end{align*}
where $d_1, d_2$ form a dense partition of the diagonal $\set{(x, y):x+y=1,\; x, y>0}$. 
\begin{lemma}\label{lem:delta}
 $(\Sq, v_0)\models\Delta^+(\c P)$.  Conversely, let $w$ be any valuation such that $(\Sq,  w)\models\Delta^+(\c P)$.  Let $w'$ be obtained from $w$ by reflection in $y=x$, i.e. $w'(p)=\set{(y, x): (x, y)\in w(p)}$, for each proposition $p$.   Then either $w$ or $w'$ agrees with $v_0$ on $\b, \t, \l, \r, N, S, E, W$, and $w(\delta_1)\cup w(\delta_2)$ is a continuous spatial open line segment  $d$ from $(0,1)$ to $(1,0)$,\/ $w(\alpha)$  is the interior of the square above $d$ and $w(\beta)$ is the interior of the square below $d$.
 \end{lemma}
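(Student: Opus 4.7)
The forward direction is a direct verification: each conjunct of $\Delta^+(\c P)$ translates into a routine geometric check against the definition of $v_0$; the relations $p<q$ in $\c P$ were read off $v_0$, so the implications $p\to\F q$ and $q\to\P p$ hold automatically, and the missing pairs give $p\to\G\neg q$ and $q\to\H\neg p$, while the reflexivity/irreflexivity clauses match the definitions of the various $v_0(p)$.

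For the converse, let $w$ satisfy $(\Sq,w)\models\Delta^+(\c P)$, so every point of $\Sq$ carries exactly one $\c P$-label.  I first pin down the four corners.  Since $\b$ has no $\c P$-predecessor, the clauses $\b\to\H\neg q$ (for $q\neq\b$) and $\b\to\G\neg\b$ force any $\b$-point to have no $<$-predecessor in $\Sq$, so $w(\b)\subseteq\{(0,0)\}$; the constant-$\b$ labelling is ruled out by $\b\to\G\neg\b$ and every other $p$ demands $\P\b$, so $w(\b)=\{(0,0)\}$.  Dually $w(\t)=\{(1,1)\}$.  The successor/predecessor lists of $\l$ in $\c P$ are $\{N,\t\}$ and $\{\b,W\}$, and those of $\r$ are $\{E,\t\}$ and $\{\b,S\}$.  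Combined with $W\to\F\l$ and $N\to\P\l$ (and the dual relations for $\r$) this yields the ``L-shape'' equalities
\[ L_\l=w(\l)^\downarrow\cup w(\l)^\uparrow=\{\b,\t\}\cup w(W)\cup w(\l)\cup w(N), \]
and similarly $L_\r=\{\b,\t\}\cup w(S)\cup w(\r)\cup w(E)$.  Mutual exclusion of $W,S,N,E,\l,\r$ then gives $L_\l\cap L_\r=\{\b,\t\}$.  Examining the lower-cone part $w(\l)^\downarrow\cap w(\r)^\downarrow=\{\b\}$: for each $(a,b)\in w(\l)$ and $(a',b')\in w(\r)$, the rectangles $[0,a]\times[0,b]$ and $[0,a']\times[0,b']$ meet only at $(0,0)$, forcing $\min(a,a')=\min(b,b')=0$; the upper-cone part dually forces $\max(a,a')=\max(b,b')=1$.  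Together this pins $\{(a,b),(a',b')\}=\{(0,1),(1,0)\}$, so both $w(\l)$ and $w(\r)$ are singletons at opposite corners.  The two options correspond to $w$ and its reflection $w'$; WLOG $w(\l)=\{(0,1)\}$ and $w(\r)=\{(1,0)\}$, which then forces $w(W),w(S),w(E),w(N)$ to be precisely the four open edges of $\Sq$.

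The interior of $\Sq$ is partitioned into $w(\alpha)\cup w(\beta)\cup w(\delta_1)\cup w(\delta_2)$.  Set $X=w(\b)\cup w(W)\cup w(S)\cup w(\l)\cup w(\r)\cup w(\beta)$.  Inspecting the $\c P$-predecessors of each constituent label shows $X$ is downward closed in $\Sq$; and $\t\in\Sq\setminus X$, so lemma~\ref{lem:segment} applies and the boundary $\Gamma$ of $X$ in $\Sq$ is homeomorphic to a closed line segment.  The parameterization of $\Gamma$ by slower-than-light lines $y=x+u$, together with the fact that $y-x$ attains $\pm 1$ on $\Sq$ only at $\l,\r$ (both in $\Gamma$), shows $\Gamma$ has $\l$ and $\r$ as its two endpoints.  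Every $p\in d:=w(\delta_1)\cup w(\delta_2)$ lies in $\Gamma$: the $\c P$-predecessors $\{\b,W,S,\beta\}$ of $\delta_i$ all lie in $X$, so interior $\beta$-points accumulate at $p$ from below and place $p$ in the closure of $X$, while $p\in\Sq\setminus X$.  Conversely I argue $\Gamma\setminus\{\l,\r\}\subseteq d$: since $\delta_1,\delta_2$ are irreflexive and mutually incomparable, any sequence of $d$-points converging to $\b$ (or to $\t$) would contain two $<$-comparable $\delta$-points, which is impossible, so $\b,\t$ are in the interior of $X$ and not in $\Gamma$; and with the edge labels already fixed, the only interior points that can lie on $\Gamma$ must have all their $<$-predecessors in $X$ and yet fail to be in $X$ themselves, which the predecessor profiles of $\alpha,N,E,\t$ combined with lemma~\ref{lem:cont}'s uniqueness rule out, leaving only the $\delta_i$ labeling.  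Hence $d=\Gamma\setminus\{\l,\r\}$ is a continuous open line segment from $(0,1)$ to $(1,0)$, and is a spatial set because distinct $\delta_i$-points are $<$-incomparable.  The up/downward closure conclusions then force $w(\alpha)$ and $w(\beta)$ to occupy the two open regions of $Int(\Sq)$ on either side of $d$.

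The main obstacle I anticipate is the final identification $\Gamma\setminus\{\l,\r\}=d$: ruling out boundary points labelled by the edge propositions or by $\alpha$ requires a careful analysis of how the various labels accumulate near $\Gamma$, exploiting the irreflexivity of $\delta_1,\delta_2$ to block unwanted accumulation at the corners $\b,\t$.
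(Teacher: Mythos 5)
The proposal is essentially correct but follows a genuinely different route from the paper's in two places, and it contains one acknowledged gap.

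For the corners $\l,\r$ the paper derives the temporal consequence $\b\to\G(\neg\F\l\vee\neg\F\r)$ (and its dual) from $\Delta^+(\c P)$ and argues directly that $w(\l),w(\r)$ are the singletons $\{(0,1)\}$ and $\{(1,0)\}$. You instead use an ``L-shape'' argument: the up- and down-cones of the $\l$-points and $\r$-points meet only at $\b,\t$, and then a coordinatewise min/max argument pins them to opposite corners. Both work; the paper's derivation is shorter but yours is a clean alternative.

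For the diagonal the paper applies Lemma~\ref{lem:segment} twice, to the upper boundary of $w(\beta)$ and the lower boundary of $w(\alpha)$ inside the open interior, and argues the open strip between them would have to be densely $\delta$-labelled, contradicting irreflexivity and mutual $<$-incomparability of $\delta_1,\delta_2$; hence the two boundaries coincide and are the desired $d$. You instead form the single downward-closed set $X=w(\b)\cup w(W)\cup w(S)\cup w(\l)\cup w(\r)\cup w(\beta)$ inside the closed square and apply Lemma~\ref{lem:segment} once. This has the technical advantage that the hypotheses of that lemma (closed square, downward-closed subset, both $S$ and $D\setminus S$ nonempty) are met verbatim, whereas the paper's application to $w(\beta)$ inside the open interior needs a small adaptation. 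On the other hand, your single-boundary route costs more at the end: you must separately rule out the corners $\b,\t$ from $\Gamma$ and also rule out $\beta$-, $\alpha$- and edge-labelled points on $\Gamma$. You do the first via the observation that an antichain of $\delta$-points cannot converge to a corner of $\Sq$ (correct), but the exclusion of $\alpha$- and $\beta$-points from $\Gamma$ is exactly what you flag as ``the main obstacle'' and leave informal; the paper avoids this because in the two-boundary picture any point strictly between the two boundaries is automatically $\delta$-labelled, so the antichain contradiction is immediate without having to characterise the boundary points themselves. Your gap is fillable (e.g.\ a $\beta$-point $\x\in\Gamma$ would have points of $X^c$ accumulating at it, and repeated appeals to $\alpha\to\P\delta_i$ together with Lemma~\ref{lem:cont} then force two comparable $\delta$-points in a small neighbourhood), but it does require real work, not just a remark. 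Overall: a sound alternative strategy, slightly less streamlined than the paper's in the final identification of $d$.
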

 \begin{proof}
 The first part is easily verified.  For the second part, since $\Delta^+(\c P)\models \b\rightarrow\H\bot$ we must have $w(\b)=\set{(0, 0)}$ and similarly $\w(\t)=\set{(1, 1)}$.  Since $\Delta^+(\c P)$ entails $\b\rightarrow\G(\neg \F\l\vee\neg\F\r)$ and $\t\rightarrow\H(\neg\P\l\vee\neg\P\r)$ it follows that $w(\l)$ is either $\set{(0, 1)}$ or $\set{(1, 0)}$ and $w(\r)$ is the other singleton.  It is then easy to see that $(\Sq, w)\models\Delta^+(\c P)$ forces either  $w$  or $w'$ to agree with $v_0$ on $\b, \t, \l, \r, N, S, E, W$.  Since $w(\beta)$ is downward closed within the interior of the square, by lemma~\ref{lem:segment} its upper boundary is homeomorphic to a line segment (clearly it contains more than just one point), similarly $w(\alpha)$ is upward closed within the interior of the square and its lower boundary is homeomorphic to an open line segment.   All points in the open set between these boundary lines must be within $w(\delta_1)\cup w(\delta_2)$, but since $\delta_1, \delta_2$ are irreflexive in $\c P$ there can be no such points, hence these two boundaries are identical.  Since $\beta\rightarrow \F \delta_1$ and $\beta\rightarrow \F \delta_2$ are included in $\Delta^+(\c P)$ it follows that this common boundary line is spatial and densely covered by $w(\delta_1)$ and $w(\delta_2)$. Since every  point in $w(N)\cup w(E)$ is above points in $w(\delta_1)$ and  points in $w(\delta_2)$ and every point in $w(W)\cup w(S)$ is below points in $w(\delta_1)$ and points in $w(\delta_2)$, it follows that the common boundary line runs from $(0,1)$ to $(1, 0)$.
 \end{proof}

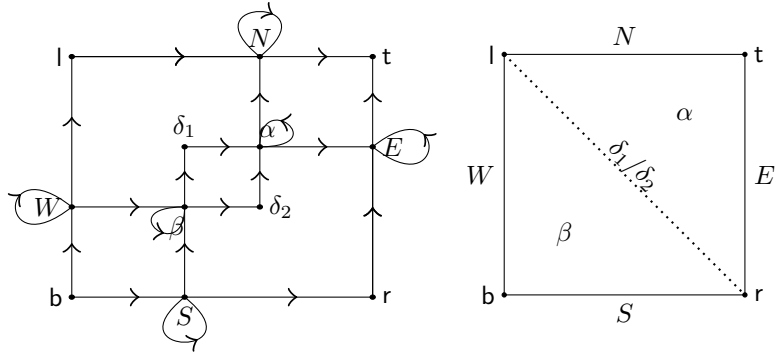
\begin{figure}
\begin{tikzpicture}[xscale=.5,yscale=.4]

\draw[fill] (0,0) circle [radius=2pt];
\draw[fill] (8,0) circle [radius=2pt];

\draw[fill] (0,8) circle [radius=2pt];

\draw[fill] (0,3) circle [radius=2pt];

\draw[fill] (3,0) circle [radius=2pt];
\draw[fill] (3,3) circle [radius=2pt];
\draw[fill] (5,8) circle [radius=2pt];
\draw[fill] (8,5) circle [radius=2pt];
\draw[fill] (5,5) circle [radius=2pt];
\draw[fill] (5,3) circle [radius=2pt];
\draw[fill] (3,5) circle [radius=2pt];
\draw[fill] (8,8) circle [radius=2pt];

\node[left] at (0,0) {$\b$};
\node[right] at (8,8) {$\t$};
\node[left] at (0, 8) {$\l$};
\node[right] at (8,0) {$\r$};
\node[below] at (3, 0) {$S$};
\node[left] at (0, 3) {$W$};
\node[above] at (5, 8) {$N$};
\node[right] at (8, 5) {$E$};
\node[below] at (2.8, 3){$\beta$};
\node[above] at (3, 5) {$\delta_1$};
\node[right] at (5, 3) {$\delta_2$};
\node[above] at (5.2, 5) {$\alpha$};

\draw[->-] (3,0)--(3,3);
\draw[->-] (0,3)--(3,3);
\draw[->-] (3,3)--(5,3);
\draw[->-] (3,3)--(3,5);
\draw[->-] (3,5)--(5,5);
\draw[->-] (5,3)--(5,5);
\draw[->-] (5,5)--(5,8);
\draw[->-] (5,5)--(8,5);

\draw[->-] (0,0)--(0,3);
\draw[->-] (0,0)--(3,0);

\draw[->-] (0,3)--(0,8);
\draw[->-] (3,0)--(8,0);
\draw[->-] (8,0)--(8,5);
\draw[->-] (0,8)--(5,8);

\draw[->-] (8,5)--(8,8);
\draw[->-] (5,8)--(8,8);

\draw[->-] (8,0)--(8,5);
\draw[->-] (0,8)--(5,8);

\draw[->-] (5,8)..controls (3,10) and (7,10) .. (5,8);
\draw[->-]  (8,5) .. controls (10,3) and (10,7) .. (8,5);
\draw[->-]  (3,0)..controls (1,-2) and (5,-2) .. (3,0);
\draw[->-]  (0,3) .. controls (-2,1) and (-2,5) .. (0,3);
\draw[->-] (5,5) .. controls(7,5) and (5,7) ..(5,5);
\draw[->-] (3,3) .. controls  (1,3) and (3,1) .. (3,3);

\end{tikzpicture}
\begin{tikzpicture}[scale=.4]
\draw  (0,0) rectangle (8,8);

\draw[fill] (0,0) circle [radius=2pt];
\draw[fill] (8,0) circle [radius=2pt];
\draw[fill] (0,8) circle [radius=2pt];
\draw[fill] (8,8) circle [radius=2pt];

\draw[dotted,thick] (0,8)--(8,0);

\node[left] at (0,0) {$\b$};
\node[right] at (8,8) {$\t$};
\node[left] at (0, 8) {$\l$};
\node[right] at (8,0) {$\r$};

\node[below] at (4, 0) {$S$};
\node[left] at (0,4) {$W$};
\node[right] at (8,4) {$E$};
\node[above] at  (4,8) {$N$};

\node at (2,2) {$\beta$};
\node at (6,6) {$\alpha$};
\node[rotate=-45] at(4.3,4.3) {$\delta_1/\delta_2$};

\node at (4,-1.8) {\/};

\end{tikzpicture}

\caption{\label{fig:square}  The transitive relation $\c P$ and a model $(\Sq, v_0)$ of $\Delta^+(\c P)$.  }

\end{figure}
Now, for the reduction of the validities of $(A, <)$ to those of $\Sq$, let $\phi$ be any temporal formula not involving any of the propositions in $\c P$.   Recall from definition~\ref{def:rel} that $\phi_\alpha$ is obtained from $\phi$ by  relativising to $\alpha$.   Our reduction $\rho$ maps $\phi$ to 
\[\rho(\phi)=\Delta^+(\c P)\wedge \phi_\alpha.\]
\begin{lemma}
Let $\phi$ be a temporal formula.  $\phi$ is satisfiable in $(A, <)$ if and only if $\rho(\phi)$ is satisfiable in $\Sq$.
\end{lemma}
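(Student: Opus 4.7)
The plan is to reduce $\phi$-satisfiability in $(A,<)$ to $\rho(\phi)$-satisfiability in $\Sq$ via the relativization technique of lemma~\ref{lem:P}, using $\Delta^+(\c P)$ together with lemma~\ref{lem:delta} to ensure that the relativization domain $w(\alpha)$ is essentially an upper triangle of $\Sq$.

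For the forward direction, if $(A,<,u),\x\models\phi$ I would extend $u$ to a valuation $w$ on $\Sq$ by setting $w(p)=v_0(p)$ for each proposition $p$ occurring in $\Delta^+(\c P)$ and $w(q)=u(q)$ for each proposition $q$ of $\phi$ (these assignments are consistent because $\phi$ avoids the $\c P$ propositions by hypothesis). Lemma~\ref{lem:delta} (first part) then gives $(\Sq,w)\models\Delta^+(\c P)$, and since $w(\alpha)=v_0(\alpha)=A$ and $w$ agrees with $u$ inside $A$ on propositions of $\phi$, lemma~\ref{lem:P} yields $(\Sq,<,w),\x\models\phi_\alpha$, so $(\Sq,<,w),\x\models\rho(\phi)$.

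For the backward direction, suppose $(\Sq,<,w),\x\models\rho(\phi)$. From $\phi_\alpha\to\alpha$ we get $\x\in w(\alpha)$, and by lemma~\ref{lem:delta}, after possibly replacing $w$ by its reflection in the line $y=x$ (which preserves $<$ and hence the truth of every temporal formula under the reflected valuation), I may assume $w$ agrees with $v_0$ on the eight boundary propositions and $w(\alpha)=W:=\set{(x,y)\in(0,1)^2:y>g(x)}$ for some continuous strictly decreasing bijection $g:[0,1]\to[0,1]$ with $g(0)=1$ and $g(1)=0$. Lemma~\ref{lem:P} then delivers $(W,<_W,w_W),\x\models\phi$. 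To transfer this to $(A,<)$, I would introduce the map $\Psi:W\to A$ given by $\Psi(x,y)=(x,\,1-g^{-1}(y))$. The assumption $y>g(x)$ yields $g^{-1}(y)<x$, so $x+(1-g^{-1}(y))>1$ and $\Psi$ takes values in $A$; conversely $\Psi^{-1}(a,b)=(a,g(1-b))$ lands in $W$ by the symmetric computation. Since $g^{-1}$ is strictly decreasing, $y_1\leq y_2\iff 1-g^{-1}(y_1)\leq 1-g^{-1}(y_2)$, and the first coordinate is unchanged, so $\Psi$ is an order-isomorphism of Kripke frames. Pulling back along $\Psi$ by setting $u(q):=\Psi[w(q)\cap W]$ for each proposition $q$ of $\phi$ then gives $(A,<,u),\Psi(\x)\models\phi$.

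The step I initially expected to be the main obstacle is the construction of the order-isomorphism $\Psi$, because the curve $d$ coming from lemma~\ref{lem:delta} can be arbitrarily wavy and does not in general admit a linear reparametrization into the anti-diagonal. What makes it easy in the end is that $W$ and $A$ are both regions above the graph of a strictly decreasing continuous surjection $[0,1]\to[0,1]$, so ``straightening'' the graph of $g$ into the anti-diagonal via $y\mapsto 1-g^{-1}(y)$ in the second coordinate suffices, and the preservation of the first coordinate together with the decreasing monotonicity of $g^{-1}$ makes the map automatically order-preserving.
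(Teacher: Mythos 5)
Your proof is correct and takes essentially the same route as the paper: both directions hinge on lemmas~\ref{lem:delta} and \ref{lem:P}, and your explicit order isomorphism $\Psi(x,y)=(x,\,1-g^{-1}(y))$ is precisely the coordinate-wise rescaling that the paper obtains by invoking lemma~\ref{lem:equiv} to straighten the spatial curve $d$ into the anti-diagonal before relativising to $A$. The reflection step you include in the backward direction is harmless but in fact unnecessary, since lemma~\ref{lem:delta} already guarantees $w(\alpha)$ is the region above a spatial curve running from $(0,1)$ to $(1,0)$ whether or not $w$ itself agrees with $v_0$ on the corner and edge propositions.
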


\begin{proof}
Suppose $\phi$ is satisfiable in $(A, <)$, say $(A, <, v), \x\models\phi$ (some valuation $v$, some $\x\in A$).  Now let $v'$ be identical to $v$ on propositions occurring in $\phi$ but  identical to $v_0$ on propositions $\b, \t, \l, \r, N, S, E, W, \delta_1, \delta_2, \alpha, \beta$.  Then $(\Sq, v)\models \rho(\phi)$, by
lemmas~\ref{lem:delta} and \ref{lem:P}.

Conversely suppose $(\Sq, v), \x\models\rho(\phi)$, some $\x$ necessarily in $v(\alpha)$.   By lemma~\ref{lem:P} we may assume that $v(s)\subseteq v(\alpha)$, for each proposition $s$ in $\phi$.  By lemma~\ref{lem:delta} we know that $v(\delta_1)\cup v(\delta_2)$ is a spatial line from $(0,1)$ to $(1, 0)$ so by lemma~\ref{lem:equiv} there is an equivalent structure $(\Sq, v')\models\rho(\phi)$ where $v'(\delta_1)\cup v'(\delta_2)$ is the line $x+y=1$ so $v'(\alpha)=A$.  Recall  from definition~\ref{def:rel} that $(A, <, v'_A)$ is the structure obtained  from $(\Sq, v')$ by relativising to $A$.  By lemma~\ref{lem:P} we deduce that $(A, <, v'_A)\models\phi$, so $\phi$ is satisfiable in a model on the frame $(A, <)$.
\end{proof}
Hence,
\begin{theorem}
The interval logics $(\mathcal I, \subset)$ and $(\mathcal I, \subseteq)$ are PSPACE-complete, where $\subseteq$ is the containment relation between intervals $\set{\mbox{equals, starts, during, ends}}$ and $\subset$ is the strict containment relation $\set{\mbox{starts, during, ends}}$.  \end{theorem}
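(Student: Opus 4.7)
The plan is to reduce the validity problem for each interval logic to the validity problem over the bounded triangular frame $A$ via a simple coordinate change, and then invoke the reduction to $\Sq$ established just above.

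Define $\iota : \mathcal{I} \to \reals^2$ by $\iota(a, b) = (-a, b)$; its image is the open half-plane $H = \set{(u, v) \in \reals^2 : u + v > 0}$. For intervals $i = (a, b)$ and $j = (a', b')$, we have $i \subseteq j$ iff $a' \leq a$ and $b \leq b'$, which is precisely $\iota(i) \leq \iota(j)$ in the product order on $\reals^2$. Hence $\iota$ is a frame isomorphism from $(\mathcal{I}, \subseteq)$ onto $(H, \leq)$ and from $(\mathcal{I}, \subset)$ onto $(H, <)$. Applying lemma~\ref{lem:equiv} coordinate-wise, via any order-preserving bijection $\reals \to (0, 1)$, then identifies $(H, \leq)$ with $(A, \leq)$ and $(H, <)$ with $(A, <)$, reducing the interval validity problems to the already-analysed triangular ones.

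Next, apply the reduction $\rho(\phi) = \Delta^+(\c P) \wedge \phi_\alpha$ established earlier in this section: $\phi$ is satisfiable in $(A, <)$ iff $\rho(\phi)$ is satisfiable in $\Sq$. The reflexive case $(A, \leq)$ is handled by the same substitution used in theorem~\ref{thm:decidable} --- replace each subformula $\F\psi$ of $\phi$ by $\psi \vee \F\psi$ and each $\P\psi$ by $\psi \vee \P\psi$ --- reducing to $(A, <)$-satisfiability. Satisfiability over $\Sq$ is in PSPACE: $\phi$ is satisfiable over $\Sq$ iff there is a closed proper boundary map $\partial \in B$ with $\phi$ appearing in some \MCS labelling $\partial$, which can be tested non-deterministically in polynomial space by guessing $\partial$ at the outermost level and running algorithm~\ref{alg2} (lemma~\ref{lem:alg2}); since $\mbox{NPSPACE}=\mbox{PSPACE}$, this yields the upper bound.

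For PSPACE-hardness, the argument runs as in theorem~\ref{thm:complexity}: the purely modal logic \textbf{S4.2} is PSPACE-hard \cite[theorem~20]{Shap05} (based on \cite{Lad77}), and $(\mathcal{I}, \subseteq)$ is reflexive, transitive, and directed (any two intervals have a common super-interval), so its modal logic coincides with \textbf{S4.2} by the same style of p-morphism / embedding argument used for $(D, \leq)$ just before the theorem; hardness for $(\mathcal{I}, \subset)$ follows by the same reflexivity-removing transformation employed for $(\reals^2, \leq) \mapsto (\reals^2, <)$. The only genuinely new ingredient in the whole argument is the coordinate change $\iota$, which supplies the containment-to-product-order dictionary; everything else is assembled from tools already in place. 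The main (mild) obstacle I anticipate is verifying that algorithm~\ref{alg2}, designed for open rectangle models over $(\reals^2, <)$, adapts cleanly to the closed proper rectangle $\Sq$ --- but this is immediate once the outermost guessed boundary map is allowed to be closed, since the recursive procedure is unchanged.
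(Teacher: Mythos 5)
Your overall strategy is the same as the paper's: reduce $(\mathcal I,\subset)$-satisfiability to $(A,<)$-satisfiability by a coordinate change, then to $\Sq$-satisfiability via $\rho$, then show $\Sq$-satisfiability is in PSPACE via algorithm~\ref{alg2}; hardness via {\bf S4.2} and \cite{Shap05}. Your coordinate change $\iota(a,b)=(-a,b)$ landing in the open half-plane $\set{u+v>0}$ followed by lemma~\ref{lem:equiv} is a cosmetic variant of the paper's $f(x,y)=(1-\lambda(x),\lambda(y))$ landing directly in the bounded triangle $A$; both are correct.

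However, there is a genuine gap in your characterisation of satisfiability over $\Sq$. You claim ``$\phi$ is satisfiable over $\Sq$ iff there is a closed proper boundary map $\partial\in B$ with $\phi$ appearing in some \MCS labelling $\partial$,'' but this is false in both directions. First, membership of $\partial$ in $B$ only guarantees a rectangle model that may \emph{pass defects out to its boundary}; since $\Sq=[0,1]\times[0,1]$ is a closed rectangle with no neighbouring rectangle to absorb those defects, you additionally need the joined boundary map to have \emph{no external defects} (for instance, $\F p\wedge\G\neg p$ at $\t=(1,1)$ has a $\F$-defect that cannot be passed North or East in $\Sq$, so it must be rejected even though boundary maps with this corner label lie in $B$). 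Second, a boundary map records only the boundary traces, corner \MCSs, and the upper and lower clusters; if $\phi$ holds only at an isolated interior point (e.g.\ $\phi=p$ with $p$ true at a single point in the interior), nothing in a single boundary map of $\Sq$ would detect it. The paper's fix for both problems is the analogue of lemma~\ref{lem:4square}: check for \emph{four} closed proper boundary maps $\partial_1,\dots,\partial_4\in B$ that fit together, demand $\phi$ belong to the common corner $\partial_1(\r)=\partial_2(\b)=\partial_3(\t)=\partial_4(\l)$ (so the target point is interior to $\Sq$ yet recorded as a corner of each piece), and demand the join $(\partial_3\oplus_E\partial_4)\oplus_N(\partial_1\oplus_E\partial_2)$ have no external defects. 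You flagged adapting algorithm~\ref{alg2} to the closed square as a ``mild obstacle,'' but these two missing conditions are exactly what make the adaptation nontrivial; without them the reduction is unsound.
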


\begin{proof}
First we check that satisfiablity (hence also validity)  over the frame $\Sq$ is decidable in PSPACE.  To check if a temporal formula $\phi$ is satisfiable in $\Sq$ we check if there are four closed proper boundary maps $\partial_1, \partial_2, \partial_3, \partial_4\in B$ (using algorithm~\ref{alg2}) that fit together at a common corner containing $\phi$, i.e. $\phi\in \partial_1(\r)=\partial_2(\b)=\partial_3(\t)=\partial_4(\l)$, and where the join $(\partial_3\oplus_E\partial_4)\oplus_N(\partial_1\oplus_E\partial_2)$ has no external defects.  This can be checked using polynomial space, by lemma~\ref{lem:alg2}.

 Let $\lambda$ be an  order isomorphism from $(\reals, <)$ onto $((0, 1), <)$.
The map $f:\mathcal I \rightarrow A$ defined by $f(x, y)=(1-\lambda(x), \lambda(y))$ is an isomorphism from $(\mathcal I, \subset)$ onto $(A, <)$.  Since the validity problem for $(A, <)$ reduces to the validity problem for $\Sq$ which is in PSPACE, it follows that the interval logic of strict containment is in PSPACE.  The satisfiability problem for $(\mathcal I, \subseteq)$ reduces to the satisfiability problem for $(\mathcal I, \subset)$ so the interval logic of (non-strict) containment is also in PSPACE.  The purely modal logic $(\mathcal I, \subseteq)$ is {\bf S4.2}, so these temporal logics are PSPACE-hard, by \cite[theorem~20]{Shap05}.
\end{proof}

\section{Distinguishing formulas}\label{sec:distinguish}
The formula $p\wedge \G\neg p$ is satisfiable in any frame with an irreflexive point, but not in reflexive frames, the past operator $\P$ is not needed in order to distinguish these two kinds of frames.  We mentioned at the beginning that the purely modal language cannot distinguish between two or three dimensional reflexive  Minkowski spacetime, nor is there a difference between the modal logics of these frames and the modal logic of a similar frame where the coordinates are restricted to the rational numbers --- in each case the modal logic is {\bf S4.2}.  However, with temporal operators the logics of these different frames are not the same.      Recall from definition~\ref{def:delta} the formula $\Delta(\c P)$, for any transitive relation $\c P$.

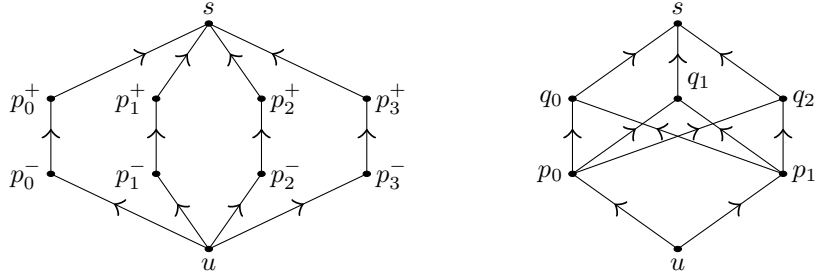
\begin{figure}
\begin{tikzpicture}[xscale=.7,yscale=.5]
\draw[fill] (0,0) circle [radius=2pt];
\draw[fill] (-3,2) circle [radius=2pt];
\draw[fill] (-1,2) circle [radius=2pt];
\draw[fill] (1,2) circle [radius=2pt];
\draw[fill] (3,2) circle [radius=2pt];

\draw[fill] (-3,4) circle [radius=2pt];
\draw[fill] (-1,4) circle [radius=2pt];
\draw[fill] (1,4) circle [radius=2pt];
\draw[fill] (3,4) circle [radius=2pt];

\draw[fill] (0,6) circle [radius=2pt];

\draw[->-] (0,0) -- (-3,2);
\draw[->-] (0,0) -- (-1,2);
\draw[->-] (0,0) -- (1,2);
\draw[->-] (0,0) -- (3,2);

\draw[->-] (-3,2) -- (-3,4);
\draw[->-] (-1,2) -- (-1,4);
\draw[->-] (1,2) -- (1,4);
\draw[->-] (3,2) -- (3,4);

\draw[->-] (-3,4) -- (0,6);
\draw[->-] (-1,4) -- (0,6);
\draw[->-] (1,4) -- (0,6);
\draw[->-] (3,4) -- (0,6);

\node[below] at (0,0) {$u$};
\node[left] at (-3,2) {$p_0^-$};
\node[left] at (-1,2) {$p_1^-$};
\node[right] at (1,2){$p_2^-$};
\node[left] at (-3,4) {$p_0^+$};
\node[left] at (-1,4) {$p_1^+$};
\node[right] at (1,4){$p_2^+$};

\node[right] at (3,2) {$p_3^-$};
\node[right] at (3, 4){$p_3^+$};
\node[above] at (0,6) {$s$};
\end{tikzpicture}
\hspace{.5in}
\begin{tikzpicture}[xscale=.7,yscale=.5]
\draw[fill] (0,2) circle [radius=2pt];
\draw[fill] (2,0) circle [radius=2pt];
\draw[fill] (4,2) circle [radius=2pt];
\draw[fill] (0,4) circle [radius=2pt];
\draw[fill] (4,4) circle [radius=2pt];
\draw[fill] (2,6) circle [radius=2pt];
\draw[fill] (2,4) circle [radius=2pt];

\draw[->-] (2,0) -- (0,2);
\draw[->-] (2,0) -- (4,2);
\draw[->-] (0,2) -- (0,4);
\draw[->-] (0,2) -- (4,4);
\draw[->-] (4,2) -- (4,4);
\draw[->-] (4,2) -- (0,4);
\draw[->-] (0,4) -- (2,6);
\draw[->-] (4,4) -- (2,6);
\draw[->-] (0,2) -- (2,4);
\draw[->-] (4,2) -- (2,4);
\draw[->-] (2,4) -- (2,6);

\node[below] at (2,0) {$u$};
\node[above] at (2,6) {$s$};
\node[left] at (0,2) {$p_0$};
\node[right] at (4,2){$p_1$};
\node[left] at (0,4) {$q_0$};
\node[above right] at (2,4) {$q_1$};
\node[right] at (4,4) {$q_2$};

\end{tikzpicture}

\caption{\label{fig:spos}  Partial orders: $\c P_1$ (left) is a non-distributive lattice and $\c P_2$ (right) is not even a lattice.  $\Delta(\c P_1)$ is satisfiable in two dimensional rational frames, but not in two dimensional real frames.  $\Delta(\c P_2)$ is satisfiable in three dimensions but not in two dimensions.}
\end{figure}

\begin{theorem}\label{thm:po}
Let $\c P_1, \c P_2$ be the  partial orders shown in figure~\ref{fig:spos}.
\begin{enumerate}
\item The formula $\Delta(\c P_1)$ is satisfiable in $(\rats^2, \leq)$ and $(\rats^2, <)$ but not in $(\reals^2, \leq)$ nor in $(\reals^2, <)$.
\item 
The formula $\Delta(\c P_2)$ is satisfiable in $(F^3, \leq)$ and $(F^3, <)$ but not in $(F^2, \leq)$ or $(F^2, <)$, where $F$ is either $\reals$ or $\rats$.
\end{enumerate}
\end{theorem}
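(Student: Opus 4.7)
My plan is to handle each part as a positive-direction explicit construction in the ``smaller'' frame together with a negative-direction impossibility in the ``larger'' frame. For the positive direction in part~(1), I would construct a valuation of $\Delta(\c P_1)$ on $(\rats^2,\leq)$ that exploits the gaps at irrationals: pick a sequence of irrationals $\alpha_0<\alpha_1<\dots$ and define each $v(p_i^-),v(p_i^+)$ as a half-open ``diagonal tile'' whose corner coordinates are irrational, arranging the eight tiles so that they are pairwise in $\triangleleft$-position; let $v(u)$ be everything below the tiles and $v(s)$ everything above.  Because the threshold lines are irrational, no rational point falls on a boundary where it would be forced to carry incompatible label demands, so the eight antichain-positioned tiles together with $v(u),v(s)$ give a well-defined valuation.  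A straightforward modification handles the irreflexive frame $(\rats^2,<)$.  For the positive direction in part~(2) I use the observation from the introduction that in three-dimensional Minkowski space the intersection of two future cones is not a single point but a branch of a hyperbola, so three pairwise-incomparable minimal upper bounds $q_0,q_1,q_2$ of representatives of $v(p_0)$ and $v(p_1)$ can sit on that hyperbola.

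For the negative direction of part~(1), suppose $\Delta(\c P_1)$ is satisfied by a valuation $v$ on $(\reals^2,\leq)$.  Pick representatives $y_i\in v(p_i^-)$ above a common $u^*\in v(u)$ (they exist by the $\F$-conjunct) and $y_i^+\in v(p_i^+)$ with $y_i^+>y_i$.  The four $y_i$ are pairwise incomparable in $(\reals^2,\leq)$ by the conjunct $p_i^-\to\G\neg p_j^-$ for $j\neq i$, and by the two-dimensional betweenness property noted in the excerpt they are totally $\triangleleft$-ordered, say $y_0\triangleleft y_1\triangleleft y_2\triangleleft y_3$; the same reasoning gives $y_0^+\triangleleft y_1^+\triangleleft y_2^+\triangleleft y_3^+$, and a short coordinate argument confines each $y_i^+$ to a narrow rectangle between $y_i$ and $y_{i+1}$ (to prevent $y_j<y_i^+$ for $j\neq i$, which would force $y_i^+\in v(p_j^-\cup p_j^+\cup s)$).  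The key step is to pick a point $w$ in the open rectangle strictly $\triangleleft$-between $y_1$ and $y_2$; one verifies that $w^\uparrow$ contains no $v(p_j^-)$-point and $w^\downarrow$ contains no $v(p_j^+)$-point, so $w$ cannot be labelled $u$ (which needs $\F p_0^-$), cannot be labelled $s$ (which needs $\P p_0^+$), and cannot be labelled any $p_i^\pm$ (ruled out by the coordinate geometry).  Part~(2) in $(F^2,\leq)$ follows the same template with three pairwise-antichain $v(q_i)$-representatives playing the role of the four $v(p_i^-)$; the uniqueness of the join in the distributive lattice $(\reals^2,\leq)$ forces the three $q_i$'s above the single point $z_0\vee z_1$ for any chosen $z_0\in v(p_0),z_1\in v(p_1)$, and the between-point argument again rules out consistent labelling in the gaps.

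The main obstacle, and where the argument needs care, is the step ``$w^\uparrow$ contains no $v(p_j^-)$-point'': a priori $v(p_j^-)$ could contain stray points far from the chosen representative $y_j$.  To close this gap I would replace the single-point argument by a topological one along the lines of section~\ref{sec:BM}: inside a closed rectangle $R=[u^*,s^*]$, each set $D_j=v(u)\cup v(p_j^-)$ is downward closed, so by lemma~\ref{lem:segment} its upper boundary $\Gamma_j$ in $R$ is a spatial closed segment; the four $\Gamma_j$'s must fit between $\Gamma_u=\partial v(u)$ and $\Gamma_s=\partial v(s)$ consistently with all pairwise antichain constraints, and tracing the $\triangleleft$-order of their cross-sections along a spatial line across $R$ forces two of the segments to cross, which is impossible for spatial arcs belonging to disjoint label classes.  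The same topological reformulation, with three boundary segments in place of four, handles the $(F^2,\leq)$ case in part~(2).
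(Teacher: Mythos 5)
Your high-level strategy is the right one (explicit construction in the ``easier'' frame, impossibility argument in the ``harder'' frame, exploiting the $\rats$/$\reals$ distinction and the 2D/3D betweenness distinction), but there are two serious gaps.

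\textbf{The positive construction for part~(1) does not work as described.} You propose ``eight tiles'' for $v(p_0^\pm),\dots,v(p_3^\pm)$, pairwise in $\triangleleft$-position (with $p_i^+$ above $p_i^-$ within each pair), with $v(u)$ below and $v(s)$ above. The conjunct $u\rightarrow\F p_i^-$ in $\Delta(\c P_1)$ requires that \emph{every} $u$-point see a $p_i^-$-witness in its future, for \emph{every} $i<4$. But if there are only four tile-pairs, a point of $v(u)$ strictly below tile pair~$0$ yet $\triangleleft$ the other three pairs sees only $p_0^-$ in its future, falsifying $u\rightarrow\F p_1^-$. Any finite (or even ordered countable) arrangement of tiles hits the same obstruction. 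The paper's construction is a Cantor-style iteration: start with $v(u)$ below and $v(s)$ above the diagonal $x+y=1$ and then, in a countable scheduling, repeatedly insert a small closed square (centred on the diagonal, labelled $p_i^-$ with top corner $p_i^+$) into each open gap, ensuring every gap eventually receives squares of all four types and every \emph{rational} diagonal point is eventually covered. The reason this works over $\rats^2$ and not $\reals^2$ is not that tile corners can be placed at irrationals, but that the uncovered Cantor-like residual along the diagonal---which would have no consistent label---contains no rational points at all. Your proposal does not capture this and cannot be straightforwardly repaired.

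\textbf{The negative argument for part~(1) is on the right topological track but the ``crossing'' step is unjustified.} You correctly identify the stray-points issue and correctly note that each $D_j=v(u)\cup v(p_j^-)$ is downward closed so its upper boundary $\Gamma_j$ in a fixed closed rectangle is a spatial closed segment by lemma~\ref{lem:segment}. But there is no reason four such nested spatial boundaries must pairwise cross: they can be linearly ordered pointwise (e.g.\ $\Gamma_0\leq\Gamma_1\leq\Gamma_2\leq\Gamma_3$), and the region between consecutive ones is not forced to consist solely of the intermediate $v(p_j^-)$---it may contain $v(p_j^+)$, $v(s)$, and so on. The paper instead works with the single boundary $\Gamma$ of $v(u)$, shows $\Gamma\cap v(u)=\emptyset$ and $\Gamma\cap v(s)=\emptyset$ (hence $\Gamma\subseteq\bigcup_i v(p_i^\pm)$) using the ``four witnesses in three possible directions'' pigeonhole, then defines an equivalence $\approx$ on $\Gamma$ via unique minimal points and invokes lemma~\ref{lem:point} to obtain uncountably many singleton $\approx$-classes. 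The contradiction is extracted at a non-extremal singleton; nothing resembling a curve-crossing argument appears, and lemma~\ref{lem:point} (existence of uncountably many singleton cells in a partition of a real segment into closed segments) is the essential ingredient your sketch lacks.

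Your part~(2) sketches are closer in spirit to the paper's (cones/hyperbolas for the positive direction over $F^3$; the betweenness property of $\reals^2$ for the negative direction), but as written they again defer to the flawed template of part~(1) rather than giving the direct betweenness analysis the paper uses: pick incomparable $\y_0,\y_1,\y_2$ above a $p_0$-witness with $\y_i\models q_i$, observe one (say $\y_1$) is between the others, and show every $\z\leq\y_1$ satisfies $q_1\vee p_0\vee u$, so $\y_1$ can never see $p_1$ in its past.
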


\begin{proof}
To see that $\Delta(\c P_1)$ is satisfiable in $(\rats^2, \leq)$ and $(\rats^2, <)$  we define a valuation $v:\props\rightarrow\wp(((0,1)\times(0,1))\;\cap\;\rats^2)$ which satisfies $\Delta(\c P_1)$, regardless of whether the accessibility relation is reflexive or irreflexive.  Essentially, this model is built like a Cantor set.

 To start with, let $v(u)=\set{(x, y): x+y<1,\; x, y\in (0, 1)\cap\rats}$ and $v(s)=\set{(x, y):x+y>1,\; x, y\in (0, 1)\cap\rats}$.    The diagonal line $x+y=1,\; 0<x<1$ is considered as  a single open gap.  Points near this diagonal may get reassigned as the construction proceeds.
   At a finite stage of the construction there will be a finite number of open gaps along the diagonal.  At the next stage we have two choices.   We may pick some $i<4$ and place a closed square in the central third of the open gap, with the centre of the new square on the main diagonal.   Every point in the new square is now added to $v(p_i^-)$ except the top corner which is added to $v(p_i^+)$, removing all points in the new square and below the main  diagonal from $v(u)$ and all points in the new square above the diagonal from $v(s)$.  Alternatively, pick any point in any open gap (there are countably many such points)  and create a closed square  within the gap centred on the diagonal,  covering the chosen point, with all points of the square  added to $v(p_0^-)$ except the top corner which is added to $v(p_0^+)$ (removing off diagonal points from $v(u)/v(s)$ as before).  In either case a single open gap is replaced by two smaller open gaps, with the new closed square between them.  Continue in this way ensuring that within every open gap, for each $i<4$, eventually a closed square using $p_i^-, p_i^+$ is inserted and every point along the diagonal gets covered by a closed square eventually.  Since there are only countably many points and countably many open segments created in the process, this can be scheduled.  The limit of this process (well defined, since each point gets reassigned at most once) is the valuation $v$.  By this construction, every point in $v(u)$ is below a square whose bottom corner is in $v(p_i^-)$ (for each $i<4$) and every point in $v(p_i^-)$ is in a closed square whose top corner is in $v(p_i^+)$, which in turn is below points in $v(s)$, furthermore all the closed squares placed along the diagonal are incomparable.  Hence $\Delta(\c P_1)$ is true in  this rational model, whether the accessibility relation is reflexive or irreflexive.
    
 However, we show next that $\Delta(\c P_1)$ cannot be satisfied in $(\reals^2, \leq)$ or $(\reals^2, <)$.   
 Suppose instead that $\Delta(\c P_1)$ could be satisfied in $(\reals^2, \leq)$ or $(\reals^2, <)$, let $v$ be a valuation satisfying  $\Delta(\c P_1)$ (in either reflexive or irreflexive frame).  Pick any $\x_0\prec\x_1$ with $\x_0\in v(u)$ and $\x_1\in v(s)$.  Let $\Gamma$ be the upper boundary of $v(u)\cap [\x_0, \x_1]$, it is homeomorphic to a closed line segment, by lemma~\ref{lem:segment}.  Suppose $\Gamma\cap v(u)\neq\emptyset$, let
 $\x\in v(u)\cap\Gamma$.  There are incomparable points $\y_i\geq \x$ with $\y_i\in v(p_i^-)$.  Since $\x\in\Gamma$ we have $S=\set{\y:\x\prec\y}\subseteq\bigcup_{i<4}(v(p_i^-)\cup v(p_i^+))\cup v(s)$.
 For each $i<4$, either $\y_i>_1\x,\; \y_i>_2\x$ or $\y_i\succ\x$, so there are distinct $i, j$ such that $\y_i, \y_j$ both belong to $\set{\y:\y>_1\x}, \set{\y:\y>_2\x}$ or $S$.   Since $\y_i, \y_j$ are incomparable they cannot belong to the same ordered line segment, hence they both belong to $S$.  But if $\y_i\succ \x\in\Gamma$ and $\y_i\in v(p_i^-)$ then $\x\prec\z\prec\y_i\rightarrow \z\in v(p_i^-)$, hence $\y_j\in S$ must be above some $\z\in v(p_i^-)$, contradicting $\Delta(\c P_1)$.   We deduce that  $\Gamma\cap v(u)=\emptyset$, similarly, $v(s)\cap[\x_0, \x_1]$ is disjoint from its lower boundary.  Thus $\Gamma\subseteq\bigcup_{i<4}(v(p_i^-)\cup v(p_i^+))$.  
Let $M(\Gamma)=\set{\x\in\Gamma:\y<\x\rightarrow\y\not\in\Gamma}$, the set of minimal points of $\Gamma$.  For each $\x\in\Gamma$ there is $\y\leq\x$ with $\y\in M(\Gamma)$, since $\Gamma$ is closed, furthermore $\y$ is unique (if $\y_1, \y_2\in M(\Gamma)$, \/ $\y_1, \y_2\leq x$ and $\x\in v(p_i^-)\cup v(p_i^+)$, some $i<4$, then points between $\y_1$ and $\y_2$ cannot see witnesses of $p_j^-$ in their future, for $j\neq i<4$).  Write $m(\x)$ for the unique point in $M(\Gamma)$ below $\x\in\Gamma$.   Define an equivalence relation $\approx$ over $\Gamma$ by letting $\x\approx\y\iff m(\x)=m(\y)$.  By closure of $\Gamma$ the supremum of $\set{\y\in\Gamma:\y\geq_1\x}$ is in $\Gamma$ and above $m(\x)$, and similarly for $\set{\y\in\Gamma:\y\geq_2\x}$.  Hence, each $\approx$-equivalence class is homeomorphic to a closed line segment. By lemma~\ref{lem:point}, uncountably many equivalence classes are singleton points.  Let $\x=m(\x)$ be a non-extremal singleton point, say $\x\in v(p_i^-)$ (some $i<4$).  There is $\x^+>\x$ with $\x^+\in v(p_i^+)$.  But then,  there is $\y$ with either  $\x<_1\y\leq \x^+$ or $\x<_2\y\leq\x^+$, either way we get $\y\in v(p_i^-)\cup v(p_i^+)$. Suppose $\y\not\in\Gamma$, so it is in the interior of $v(p_i^-)\cup v(p_i^+)$.  Then  there are incomparable points $\z_1, \z_2<\y$ with $\z_1, \z_2\in v(p_i^-)\cup v(p_i^+)$ and $\z_1\wedge\z_2\in v(u)$, so $\z_1\wedge\z_2$ cannot see $p_j^-$ in its future, for $j\neq i<4$, a contradiction.  We conclude that $\y\in\Gamma$.  Since $\y$ is distinct from $\x$ this contradicts the fact that the equivalence class of $\x$ is a singleton.

 \medskip
 
 For the second part we construct a model  for $\Delta(\c P_2)$ over $(F^3, \leq)$  and over $(F^3, <)$ (where $F$ is either $\reals$ or $\rats$)  by defining a satisfying propositional valuation $v$.   We use cylindrical coordinates $[r, \theta, t]$ where $r, \theta$ are the modulus and argument of the spatial part and $t$ is the time coordinate.    The surface of the future lightcone from the origin $O$ is given by $r=t,\; t> 0,\; 0\leq\theta<2\pi$. Let $\lambda:[0, 2\pi)\rightarrow\set{0, 1, 2}$ be a dense function (i.e. $0\leq a<b<2\pi,\; i<3 \rightarrow\exists c\; (a\leq c\leq b\wedge \lambda(c)=i\wedge tan^{-1}(c)\in\rats)$,  the requirement $tan^{-1}(c)\in\rats$ ensures that the line $r=t>0,\;\theta=c$ meets $\rats^3$ densely).  We let $v(s)$ be the interior of the future lightcone with apex at the origin $O$,  we let $v(q_0), v(q_1), v(q_2)$ partition the surface of the future lightcone not including $O$, let $v(p_0)=\set{O},\; v(p_1)$ be the set of points incomparable with $O$ and let $v(u)$ be the set of points strictly below $O$, more precisely:
\begin{align*}
[r, \theta, t]\in v(s)&\iff 0\leq r<t &&\mbox{(inside future lightcone)}\\
[r, \theta, t]\in v(q_{\lambda(\theta)})&\iff r=t>0&&\mbox{(on surface of future lightcone)}\\
[r, \theta, t]\in r(p_0)&\iff r=t=0&&\mbox{(origin)}\\
[r, \theta, t]\in v(p_1)&\iff r>|t|&&\mbox{(incomparable with origin)}\\
[r, \theta, t]\in v(u)&\iff  r\leq -t,\; t<0&&\mbox{(below orgin)}.
\end{align*}
In the case where $F=\rats$ we must restrict to points with rational coordinates, but no other modifications are needed.  
  By this construction, every point in $v(s)$ is above points in $v(q_i)$ for $i<3$ (and conversely every point in $v(q_i)$ is below a point in $v(s)$), every point in $v(q_i) $ is above the origin, so above a point in $v(p_0)$ and above points in $v(p_1)$ (and conversely), and every point is above a point in $v(u)$, furthermore there are no points in $v(q_i)$ above or below points in $v(q_j)$ for $i\neq j<3$  and there are no points in $v(p_0)=\set{O}$ above or below points in $v(p_1)$.   Every point in $F^3$ belongs to $v(a)$ for exactly one proposition $a$.  Hence $(F^3, \leq), v\models\Delta(\c P_2)$ and $F^3, <, v\models\Delta(\c P_2)$.
 
 Finally we must show that $\Delta(\c P_2)$ is not satisfiable in $(F^2, \leq)$ or  $(F^2, <)$.   Suppose, for contradiction, that $v:\props\rightarrow\wp(D)$ is a propositional valuation, where $D$ is either $\reals^2$ or $\rats^2$, such that $(D, \leq), v\models \Delta(\c P_2)$ or $(D, <), v\models\Delta(\c P_2)$.    We will fix $D, v$ and either $\leq$ or $<$ for the rest of this proof so we may write $\x\models\phi$ instead of $(D, \leq), v, \x\models\phi$ or $(D, <), v, \x\models\phi$, where $\x\in D$.   There must be $\x\in D$ such that $\x\models p_0$ and there must be incomparable points $\y_i>\x$ where $\y_i\models q_i$ for $i<3$.  For any three incomparable points in $D$, one of them is between the other two.    Without loss, $\y_1$ is between $\y_0$ and $\y_2$, so $\x\prec\y_1$.  Observe that for all $\z\leq \y_1$ either  $\z\leq\y_0,\;\z\leq \y_2$ or $\z\geq\y_0\wedge\y_2$, hence either 
  $\z\leq \y_0\wedge\y_1,\; \z\leq \y_1\wedge\y_2$ or $\x\leq \z\leq \y_1$.   For all $\z\in [\x, \y_1]$ we have $\z\models (p_0\vee q_1)$ and $\y_0\wedge\y_1\models p_0,\; \y_1\wedge\y_2\models p_0$.  It follows that  for all $\z\leq\y_1$ either $\z\models q_1,\; \z\models p_0$ or $\z\models u$.  In particular, there is no $\z\leq\y_1$ where $\z\models p_1$, contradicting the assumption that $\Delta(\c P_2)$ is true in this model.
\end{proof}
Observe that $\Delta(\c P_1),\; \Delta(\c P_2)$ are consistent with \Ax\ (since they are true in the irreflexive, transitive, directed frames shown in figure~\ref{fig:spos}) but they are not satisfiable in $(\reals^2, <)$ and $\Delta(\c P_2)$ is not satisfiable in $(\rats^2, <)$, by theorem~\ref{thm:po}.  It follows that \Ax\ is not complete for the validities of $(\reals^2, <)$ or $(\rats^2, <)$.

\medskip

We conclude with some open problems:
\begin{enumerate}
\item Find a sound and complete axiomatisation of the temporal validities of $(\reals^2, \leq)$ and of $(\rats^2, \leq)$, also the corresponding irreflexive frames.
\item Prove the decidability of satisfiability of temporal formulas over $(\reals^2, \prec)$ (slower than light). Prove the decidability of the temporal logic of intervals with accessibility relation `during'.
\item  Consider the logics of other two dimensional frames, e.g. $(\ints^2, \leq),\; {(\ints\times\rats, \leq)}$ etc., check the decidability of these logics and look for complete axiomatisations.
\item  We have seen that the logics of $(\reals^2, \leq)$ and $(\reals^3, \leq)$ are not the same.  Are there formulas satisfiable in $(\reals^m, \leq)$ but not in $(\reals^n, \leq)$ for distinct $m, n>2$?
\item Is the logic of $(\reals^3, \leq)$ decidable? (Conjecture:no.)
\end{enumerate}

\newcommand{\etalchar}[1]{$^{#1}$}

\end{document}